
\documentclass{amsart}

\usepackage{amssymb,amsmath,amsthm,latexsym,qtree}

  \theoremstyle{definition}
  \newtheorem{definition}{Definition}
  \newtheorem{remark}[definition]{Remark}
  \newtheorem{example}[definition]{Example}

  \theoremstyle{plain}
  \newtheorem{lemma}[definition]{Lemma}
  \newtheorem{proposition}[definition]{Proposition}
  \newtheorem{theorem}[definition]{Theorem}
  \newtheorem{corollary}[definition]{Corollary}

  \newcommand{\sltwo}{\mathfrak{sl}_2(\mathbb{C})}

  \newcommand{\three}{\!\!\!}

\begin{document}

\title[Alternating quaternary algebra structures]
{Alternating quaternary algebra structures on irreducible representations of
$\sltwo$}

\author{Murray R. Bremner}

\address{Department of Mathematics and Statistics, University of Saskatchewan,
Canada}

\email{bremner@math.usask.ca}

\author{Hader A. Elgendy}

\address{Department of Mathematics and Statistics, University of Saskatchewan,
Canada}

\email{hae431@mail.usask.ca}

\date{\textit{\today}}

\begin{abstract}
We determine the multiplicity of the irreducible representation $V(n)$ of the
simple Lie algebra $\sltwo$ as a direct summand of its fourth exterior power
$\Lambda^4 V(n)$. The multiplicity is $1$ (resp.~2) if and only if $n = 4, 6$
(resp.~$n = 8, 10$).  For these $n$ we determine the multilinear polynomial
identities  of degree $ \le 7$ satisfied by the $\sltwo$-invariant alternating
quaternary algebra structures obtained from the projections $\Lambda^4 V(n) \to
V(n)$. We represent the polynomial identities as the nullspace of a large
integer matrix and use computational linear algebra to find the canonical basis
of the nullspace.
\end{abstract}

\maketitle


\section{Introduction}

The theory of $n$-ary generalizations of Lie algebras has been studied since
the late 1960's, beginning with fundamental work by Russian mathematicians; see
the survey articles by Kurosh \cite{Kurosh} and Baranovich and Burgin
\cite{BaranovichBurgin}. This theory also appeared naturally and independently
in various domains of theoretical physics. Indeed, the discovery of Nambu
mechanics \cite{Nambu} in 1973, as well as the work of Fillippov
\cite{Filippov1} in 1985, gave impulse to a significant development of this
theory. From a physical point of view, we mention the work of Takhtajan
\cite{Takhtajan}, Michor and Vinogradov \cite{MichorVinogradov}, de Azc\'arraga
and P\'erez-Bueno \cite{deAzcarragaPerezBueno}, Gautheron \cite{Gautheron},
Vaisman \cite{Vaisman}, Curtright and Zachos \cite{CurtrightZachos}, Curtright
et al.~\cite{Curtright+} and Ataguema et al.~\cite{Ataguema+}. From an
algebraic point of view, we mention the work of Kasymov \cite{Kasymov}, Ling
\cite{Ling}, Hanlon and Wachs \cite{HanlonWachs}, Gnedbaye \cite{Gnedbaye},
Bremner \cite{Bremner2}, Filippov \cite{Filippov2}, Bremner and Hentzel
\cite{BremnerHentzel1} and Pozhidaev \cite{Pozhidaev}. The latest development
in mathematical physics related to $n$-ary algebras is the influential work of
Bagger and Lambert \cite{BaggerLambert} and Gustavsson \cite{Gustavsson}, which
aims at a world-volume theory of multiple M2-branes. For a very recent
comprehensive survey of this entire area, referring to both the physical and
mathematical literature, see de Azc\'arraga and Izquierdo
\cite{deAzcarragaIzquierdo}.

The main difficulty in this theory is to find a useful generalization of the
Jacobi identity. There are two principal candidates: (1) the derivation (or
Filippov) identity, which states that the multiplication operators in the
algebra are derivations of the $n$-ary structure, and (2) the alternating sum
identity, which states that the alternating sum over all possible nested pairs
of operations is identically zero. For $n = 2$ both of these identities reduce
to the familiar Jacobi identity for Lie algebras.

A serious disadvantage of the theory of Filippov algebras ($n$-Lie algebras) is
that for $n \ge 3$ it has been shown by Ling \cite{Ling} that there is only one
simple finite-dimensional object over an algebraically closed field of
characteristic 0. That is, Filippov algebras generalize the 3-dimensional
simple Lie algebra to the $n$-ary case for $n \ge 3$, but not any of the other
simple Lie algebras. It seems that the definition of Filippov algebra is too
restrictive: the derivation identity is too strong. From this point of view,
the alternating sum identity is a good candidate for a weaker identity which
can still be naturally regarded as a generalization of the Jacobi identity. It
is an open problem to classify the alternating $n$-ary algebras which satisfy
the alternating sum identity, but it is clear from the results in Bremner and
Hentzel \cite{BremnerHentzel2} and the present paper that there is more than
one simple object, at least for $n = 3$ and $n = 4$.

Our approach is to use the representation theory of Lie algebras to construct
new alternating algebra structures and to discover natural $n$-ary
generalizations of Lie algebras. In particular, we study the exterior powers of
an irreducible representation of a simple Lie algebra; these are important
objects in invariant theory, algebraic geometry, and the theory of Lie groups
(see for example Fulton and Harris \cite{FultonHarris}). If $\Lambda^n V$, the
$n$-th exterior power of an irreducible representation $V$ of a simple Lie
algebra $L$, contains $V$ itself as a direct summand, then the projection
$\Lambda^n V \to V$ defines an alternating $n$-ary algebra structure on $V$
which is $L$-invariant in the sense that the derivation algebra of this $n$-ary
structure contains a subalgebra isomorphic to $L$. This approach was initiated
by Bremner and Hentzel \cite{BremnerHentzel2} in the case of the third exterior
powers of irreducible representations of the 3-dimensional simple Lie algebra
$\mathfrak{sl}_2(\mathbb{C})$. In this paper we extend this work to the fourth
exterior power.

We recall some basic information about representation theory of Lie algebras.
The simple Lie algebra $\sltwo$ has basis $\{ H, E, F \}$ and structure
constants
  \[
  [H,E] = 2E, \qquad [H,F] = -2F, \qquad [E,F] = H.
  \]
All other brackets follow from bilinearity and anticommutativity. For $n \in
\mathbb{Z}$, $n \ge 0$, the irreducible representation $V(n)$ of $\sltwo$ with
highest weight $n$ has dimension $n+1$; the action of $\sltwo$ with respect to
the basis $\{ \, v_{n-2i} \, | \, i=0,\hdots,n \, \}$ is
  \allowdisplaybreaks
  \begin{align}
  &
  H.v_{n-2i} = (n{-}2i) v_{n-2i},
  \label{Haction}
  \\
  &
  E.v_n = 0,
  \qquad
  E.v_{n-2i} = (n{-}i{+}1) v_{n-2i+2}
  \;
  (i=1,\hdots,n),
  \label{Eaction}
  \\
  &
  F.v_{n-2i} = (i{+}1) v_{n-2i-2}
  \;
  (i=0,\hdots,n-1) \, ,
  \qquad
  F.v_{-n} = 0.
  \label{Faction}
  \end{align}
Any finite dimensional irreducible representation of $\sltwo$ is isomorphic to
$V(n)$ for some $n$. Any finite dimensional representation of $\sltwo$ is
isomorphic to a direct sum of irreducible representations. The multiplicity of
$V(n)$ in its $k$-th exterior power $\Lambda^k V(n)$ is the dimension of the
vector space
  \[
  \mathrm{Hom}_{\sltwo}\big( \Lambda^k V(n), V(n) \big)
  \]
of $\sltwo$-invariant linear maps $P \colon \Lambda^k V(n) \to V(n)$. If this
multiplicity is positive then $P$ defines an alternating $k$-ary algebra
structure on $V(n)$,
  \[
  [x_1,\hdots,x_k] = P( x_1 \wedge \cdots \wedge x_k ),
   \]
which is $\sltwo$-invariant in the sense that the action of any $L \in \sltwo$
is a derivation of the $k$-ary multiplication: for any $x_1,\hdots,x_k \in
V(n)$ we have
  \[
  L.[x_1,\hdots,x_i,\hdots,x_k]
  =
  \sum_{i=1}^k
  [ x_1, \hdots, L.x_i, \hdots, x_k ],
  \]
For the representation theory of $\sltwo$ we refer to Humphreys
\cite{Humphreys}.


\section{Multiplicity formulas}

Bremner and Hentzel \cite{BremnerHentzel1} studied the case $k = 2$
corresponding to alternating binary algebra structures on $V(n)$. We have
  \[
  \dim \mathrm{Hom}_{\sltwo}\big( \Lambda^2 V(n), V(n) \big)
  =
  \begin{cases}
  1 &\text{if $n \equiv 2$ (mod 4)}, \\
  0 &\text{otherwise}.
  \end{cases}
  \]
In this case $n = 2$ gives the 3-dimensional adjoint representation of
$\sltwo$, $n = 6$ gives the 7-dimensional simple non-Lie Malcev algebra, and $n
= 10$ gives a new 11-dimensional anticommutative algebra satisfying a
polynomial identity of degree 7. Bremner and Hentzel \cite{BremnerHentzel2}
studied the case $k = 3$ corresponding to alternating ternary algebra
structures on $V(n)$. For $n = 6q + r$ ($0 \le r\le 5$) we have
  \[
  \dim \mathrm{Hom}_{\sltwo}\big( \Lambda^3 V(n), V(n) \big)
  =
  \begin{cases}
  q &\text{if $r = 0, 1, 2, 4$}, \\
  q+1 &\text{if $r = 3, 5$}.
  \end{cases}
  \]
The multiplicity is 1 for $n = 3, 5, 6, 7, 8, 10$; the corresponding $V(n)$
provide new examples of alternating ternary algebras.

In this paper we consider the case $k = 4$: we study alternating quaternary
algebra structures on $V(n)$ obtained from $\sltwo$-invariant linear maps
$\Lambda^4 V(n) \to V(n)$.  We first obtain a closed formula for the
multiplicity $\dim \mathrm{Hom}_{\sltwo} \big( \Lambda^4 V(n), V(n) \big)$
using a general approach which applies to arbitrary exterior powers.

\begin{theorem} \label{multiplicitytheorem}
If $n$ is odd then $\dim \mathrm{Hom}_{\sltwo}\big( \Lambda^4 V(n), V(n) \big)
= 0$. If $n$ is even then $n = 24 q + r$ with $0 \le r < 24$ ($r$ even) and we
have
  \begin{align*}
  &
  \dim \mathrm{Hom}_{\sltwo}\big( \Lambda^4 V(n), V(n) \big) =
  \\
  &
  \frac{1}{1152}
  \left\{\three
  \begin{array}{ll|ll}
  30n^2+96n &\text{if} \;\; r = 0, 16 &
  30n^2-120n+120 &\text{if} \;\; r = 2 \\
  30n^2+96n+288 &\text{if} \;\; r = 4, 12 &
  30n^2-120n+792 &\text{if} \;\; r = 6, 22 \\
  30n^2+96n-384 &\text{if} \;\; r = 8 &
  30n^2-120n+504 &\text{if} \;\; r = 10, 18 \\
  30n^2-120n+408 &\text{if} \;\; r = 14 &
  30n^2+96n-96 &\text{if} \;\; r = 20
  \end{array}
  \right.
  \end{align*}
\end{theorem}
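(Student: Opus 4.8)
The plan is to replace the computation of $\dim\mathrm{Hom}_{\sltwo}(\Lambda^4 V(n),V(n))$ by a purely combinatorial count of weight multiplicities. For any finite-dimensional $\sltwo$-module $W$, writing $W_w$ for its weight-$w$ subspace, the multiplicity of $V(n)$ as a direct summand of $W$ equals $\dim W_n - \dim W_{n+2}$: each summand $V(m)$ with $m \ge n$ and $m \equiv n \pmod 2$ contributes exactly one dimension to $W_n$, while those with $m \ge n+2$ contribute to $W_{n+2}$, so the difference isolates the summands with $m = n$. I would apply this with $W = \Lambda^4 V(n)$. A basis of $\Lambda^4 V(n)$ is given by the wedges $v_{n-2i_1}\wedge v_{n-2i_2}\wedge v_{n-2i_3}\wedge v_{n-2i_4}$ with $0 \le i_1 < i_2 < i_3 < i_4 \le n$, and by the $H$-action \eqref{Haction} each such wedge is a weight vector of weight $4n - 2(i_1+i_2+i_3+i_4)$. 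Hence $\dim(\Lambda^4 V(n))_w$ is the number of $4$-element subsets of $\{0,1,\dots,n\}$ whose elements sum to $(4n-w)/2$.

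The odd case falls out immediately: if $n$ is odd then every weight of $V(n)$ is odd, so every weight of $\Lambda^4 V(n)$, being a sum of four of them, is even; thus $(\Lambda^4 V(n))_n = (\Lambda^4 V(n))_{n+2} = 0$ and the multiplicity is $0$. For $n$ even, setting $w = n$ and $w = n+2$ shows the multiplicity equals $A(n) - B(n)$, where $A(n)$ (resp.\ $B(n)$) is the number of $4$-subsets of $\{0,1,\dots,n\}$ summing to $3n/2$ (resp.\ $3n/2 - 1$). Equivalently, assembling the wedge basis into a character, $A(n)$ and $B(n)$ are the adjacent coefficients of $q^{3n/2-6}$ and $q^{3n/2-7}$ in the Gaussian binomial coefficient $\binom{n+1}{4}_q$ (which is the coefficient of $t^4$ in $\prod_{i=0}^{n}(1 + x^{n-2i}t)$ after the standard shift), so that $A(n) - B(n)$ is the coefficient of $q^{3n/2-6}$ in $(1-q)\binom{n+1}{4}_q$. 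This reformulation is exactly what makes the approach work uniformly for an arbitrary exterior power.

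The remaining task is to evaluate this coefficient as a function of $n$. Writing $n = 2m$, the count $A(2m)$ is the number of lattice points in the slice $\{\,0 \le i_1 < i_2 < i_3 < i_4 \le 2m,\ \sum i_j = 3m\,\}$; after normalizing by $m$ this is a dilation of a fixed rational polytope of dimension $3$ lying in the hyperplane $\sum u_j = 3$ inside $[0,2]^4$, so by Ehrhart's theorem $A(2m)$ is a quasi-polynomial in $m$ of degree $3$ whose period divides $\mathrm{lcm}(1,2,3,4) = 12$, and likewise for $B(2m)$. Their difference $f(n) = A(n) - B(n)$ is a discrete derivative, so the leading cubic-in-$m$ terms cancel and $f$ is a quasi-polynomial of degree $2$ in $m$ and period dividing $12$, i.e.\ degree $2$ in $n$ and period dividing $24$. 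I would determine the eight piecewise-constant pieces either by a partial-fraction expansion of $(1-q)\binom{n+1}{4}_q$ over the roots of unity of orders $1,2,3,4$, tracking how the numerator zeros $q^{n+1},q^{n},q^{n-1},q^{n-2}$ depend on $n$ modulo these orders, or, more cheaply, by computing $f(n)$ for a spread of small even $n$ and interpolating a quadratic within each residue class modulo $24$.

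The main obstacle is this last step: because both the polynomial $\binom{n+1}{4}_q$ and the target exponent $3n/2-6$ move with $n$, the extraction is genuinely two-parameter rather than a single coefficient read off as a function of $n$, and the bookkeeping of competing periodicities — the denominator periods $2,3,4$ against the half-integer shift coming from $3n/2$ — is what forces the period $24$ and the irregular grouping of the twelve even residues $r$ into the eight displayed cases. Unimodality and symmetry of the Gaussian binomial coefficient guarantee $f(n) \ge 0$ and give a convenient consistency check; I would confirm the formulas against the directly computed values at $n = 0,2,4,6,8,10$ (for instance $\Lambda^4 V(4)\cong V(4)$ gives multiplicity $1$ and $\Lambda^4 V(2)=0$ gives multiplicity $0$), matching the cases $r = 4$ and $r = 2$.
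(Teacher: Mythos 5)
Your proposal is correct in substance and begins exactly where the paper does: the multiplicity is $\dim W_n - \dim W_{n+2}$ for $W = \Lambda^4 V(n)$ (the paper's Lemma \ref{multiplicityformula}), and each weight-space dimension is the number of $4$-subsets of $\{0,1,\dots,n\}$ with prescribed sum (Lemma \ref{weightspacedimension} after the shift $P = n - i$, which turns your target sums $3n/2$ and $3n/2-1$ into the paper's complementary sums $5n/2$ and $(5n+2)/2$); your disposal of the odd case by weight parity is also the paper's. Where you diverge is the enumeration step. The paper feeds the count into P\'olya enumeration --- the cycle index of $S_4$ evaluated at power sums, Proposition \ref{cycleindextheorem} and Corollary \ref{polyacorollary} --- and then extracts the relevant coefficient by hand, obtaining explicit cubic formulas with floor/ceiling corrections (Propositions \ref{dimension5n} and \ref{dimension5n+2}) whose difference is the stated quadratic; this yields the individual weight-space dimensions of Table \ref{multiplicitytable} as a by-product. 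You instead establish a priori that each count is a quasi-polynomial of degree $3$ and period dividing $24$ in $n$, that the difference drops to degree $2$ because the leading (volume) coefficients agree, and then pin down the twelve quadratics by interpolation; this is cleaner for the final answer but forfeits the closed forms for $\dim[\Lambda^4 V(n)]_n$ and $\dim[\Lambda^4 V(n)]_{n+2}$ themselves. Two points in your sketch deserve tightening: the region $0 \le i_1 < i_2 < i_3 < i_4 \le 2m$, $\sum i_j = 3m$ is not literally a dilation of a fixed polytope because the strictness constraints contribute constant (non-homogeneous) terms, so you should pass to $j_k = i_k - (k-1)$ and invoke quasi-polynomiality of the coefficient of $q^{3m-6}$ in $\binom{2m+1}{4}_q$ (or parametric Ehrhart theory); and the cancellation of the cubic terms in $A - B$ needs the explicit remark that the leading coefficient of each quasi-polynomial is the volume of the slice, which is unchanged by the $O(1)$ shift of the slicing hyperplane. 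Both are standard, and your consistency checks at $n = 2, 4$ are the right ones; the interpolation finish requires three data points per even residue class, i.e.\ direct computation out to roughly $n = 70$.
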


\begin{proof}
A proof using P\'olya enumeration is given in Section \ref{proofsection}.
\end{proof}

\begin{example}
The multiplicities for $n = 24q + r$ ($0 \le q \le 9$) are given in Table
\ref{multiplicityexample}.
\end{example}

\begin{corollary}
The representation $V(n)$ of $\sltwo$ occurs in $\Lambda^4 V(n)$ with
multiplicity 1 (resp.~2) if and only if $n = 4$ or $n = 6$ (resp.~$n = 8$ or $n
= 10$).
\end{corollary}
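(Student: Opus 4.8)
The plan is to deduce the corollary directly from the closed formula of Theorem \ref{multiplicitytheorem}. Write $m(n) = \dim \mathrm{Hom}_{\sltwo}\big( \Lambda^4 V(n), V(n) \big)$. By the theorem $m(n) = 0$ for every odd $n$, so attention restricts to even $n$. For even $n$ the formula expresses $m(n)$ as one of eight quadratics selected according to $n \bmod 24$, each of the shape $(30n^2 + 96n + c)/1152$ (for $r \equiv 0 \bmod 4$) or $(30n^2 - 120n + c)/1152$ (for $r \equiv 2 \bmod 4$). Since the leading coefficient $30/1152$ is positive and identical in every case, $m(n) \to \infty$; hence only finitely many $n$ can yield $m(n) \in \{1,2\}$, and the task reduces to locating them effectively.

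To bound the search I would first extract a single quadratic lower bound valid across all residue classes. In the family with linear term $-120n$ the constant terms are $120, 408, 504, 792$, all at least $120$, so $m(n) \ge (30n^2 - 120n + 120)/1152$ for those residues. In the family with linear term $+96n$ the constant terms are as low as $-384$, but for $n \ge 4$ one has $30n^2 + 96n - 384 \ge 30n^2 - 120n + 120$, since this reduces to $216n \ge 504$. Thus for every even $n \ge 4$,
\[
m(n) \ge \frac{30n^2 - 120n + 120}{1152}.
\]
Solving $(30n^2 - 120n + 120)/1152 \ge 3$, i.e.\ $5n^2 - 20n - 556 \ge 0$, gives $n \ge 13$, so $m(n) \ge 3$ for all even $n \ge 14$.

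It then remains to evaluate the formula on the finite list $n = 0, 2, 4, 6, 8, 10, 12$, which I expect to return $0, 0, 1, 1, 2, 2, 5$ respectively. Reading off these values, $m(n) = 1$ precisely for $n = 4, 6$ and $m(n) = 2$ precisely for $n = 8, 10$, which is the assertion. The only genuinely delicate point is the passage to the uniform lower bound: because $m(n)$ is a piecewise quadratic and the $+96n$ family carries the most negative constant, one must confirm that for $n \ge 4$ this family still dominates the minimal $-120n$ quadratic, so that the single bound above is legitimate for all residues simultaneously. Once that comparison is in hand, the remaining arithmetic is routine.
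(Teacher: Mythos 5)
Your argument is correct and is in essence the paper's own: both deduce the corollary from the closed formula of Theorem \ref{multiplicitytheorem} by observing that the quadratics grow without bound and reducing the claim to a finite check of small $n$. The only difference is organizational --- the paper notes that the vertices of the parabolas lie at $n=-8/5$ or $n=2$, so for each residue $r$ the multiplicity is an increasing function of $q$, and then reads off the tabulated values for $q=0,1$; you instead derive the single uniform lower bound $(30n^2-120n+120)/1152$ valid for all even $n\ge 4$ (your comparison $216n\ge 504$ is right) and check only $n\le 12$, where your evaluations $0,0,1,1,2,2,5$ agree with Table \ref{multiplicityexample}.
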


\begin{proof}
The vertices of the parabolas in Theorem \ref{multiplicitytheorem} occur at
either $n = -8/5$ or $n = 2$, so for each $r$ the multiplicity is an increasing
function of $q$.
\end{proof}

For $n = 4, 6, 8, 10$ we use computational linear algebra to find all the
multilinear polynomial identities of degree $\le 7$ satisfied by the resulting
quaternary algebras.


\section{Quaternary algebra structures}

In this section we explain how to compute explicitly the decomposition of
$\Lambda^4 V(n)$ as a direct sum of irreducible representations of $\sltwo$,
together with an explicit multiplication table for the alternating quaternary
algebra structure on $V(n)$ obtained from a projection $\Lambda^4 V(n) \to
V(n)$. Recall that $V(n)$ has the vector space basis $\{ \, v_{n-2i} \mid i =
0, 1, \hdots, n \, \}$ and that the subscript on $v_{n-2i}$ is its weight: its
eigenvalue for the action of $H \in \sltwo$.

\begin{lemma} \label{multiplicityformula} \cite[Lemma 5.1]{BremnerHentzel2}
Let $M$  be an $\sltwo$-module with $\dim M < \infty$. For $n \in \mathbb{Z}$
let $M_n = \{ v \in M \, | \, H.v = nv \}$ be the subspace of all vectors of
weight $n$ together with $0$. For $n \ge 0$ the multiplicity of $V(n)$ in the
decomposition of $M$ as a direct sum of simple $\sltwo$-modules is $\dim M_n -
\dim M_{n+2}$.
\end{lemma}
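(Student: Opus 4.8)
The plan is to reduce the statement to the well-known weight structure of the irreducible modules $V(m)$ and then extract the multiplicity of $V(n)$ by a telescoping argument. First I would invoke complete reducibility, already recalled in the excerpt: since $\dim M < \infty$, we may write $M \cong \bigoplus_{m \ge 0} a_m V(m)$, where $a_m$ denotes the multiplicity of $V(m)$ as a summand, and the goal becomes to show that $a_n = \dim M_n - \dim M_{n+2}$ for $n \ge 0$.

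The crucial input is the weight decomposition of each irreducible summand. From the action of $H$ given in \eqref{Haction}, the basis vector $v_{m-2i}$ of $V(m)$ is an eigenvector of $H$ with eigenvalue $m-2i$, so the weights of $V(m)$ are exactly $m, m{-}2, \hdots, -m$, each occurring with multiplicity one. Hence, for a fixed integer $n$, the weight space $V(m)_n$ is one-dimensional precisely when $m \ge |n|$ and $m \equiv n \pmod 2$, and is zero otherwise.

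Next I would compute $\dim M_n$ by summing over the summands. Since the weight-space functor is additive over direct sums, $\dim M_n = \sum_{m} a_m \dim V(m)_n$. For $n \ge 0$ this selects exactly those $m$ with $m \ge n$ and $m \equiv n \pmod 2$, giving
  \[
  \dim M_n = a_n + a_{n+2} + a_{n+4} + \cdots .
  \]
The analogous expression for $\dim M_{n+2}$ is $a_{n+2} + a_{n+4} + \cdots$, and subtracting cancels all the common tail terms to yield $\dim M_n - \dim M_{n+2} = a_n$, which is exactly the multiplicity of $V(n)$ in $M$.

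No step here presents a genuine obstacle; the argument is essentially a single telescoping sum built on the additivity of weight spaces and the one-dimensional weight spaces of each $V(m)$. The only point requiring care is the parity and nonnegativity bookkeeping: for $n \ge 0$ one must check that every summand $V(m)$ contributing to $M_n$ satisfies $m \equiv n \pmod 2$, so that the two series share all their tails and cancel cleanly, while summands with $m \not\equiv n \pmod 2$ contribute nothing to either $M_n$ or $M_{n+2}$ and are harmless.
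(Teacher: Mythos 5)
Your argument is correct: complete reducibility, the fact that each $V(m)$ has one-dimensional weight spaces exactly at $m, m{-}2, \hdots, -m$, and the resulting telescoping of the (finite) sums $\dim M_n = \sum_{m \ge n,\, m \equiv n (2)} a_m$ constitute the standard proof of this fact. The paper itself offers no proof, citing the lemma from Bremner and Hentzel, so there is nothing to compare against; your write-up supplies exactly the argument one would expect, and the only implicit point worth making explicit is that $\dim M < \infty$ guarantees the two series are finite, so the tail cancellation is legitimate.
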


\begin{definition} \label{definitiontensorbasis}
The \textbf{tensor basis} of $\Lambda^4 V(n)$ consists of $\binom{n{+}1}{4}$
\textbf{quadruples}:
  \[
  v_p \wedge v_q \wedge v_r \wedge v_s
  =
  \sum_{\sigma \in S_4}
  \epsilon(\sigma)
  \big(
  v_{\sigma(p)} \otimes v_{\sigma(q)} \otimes v_{\sigma(r)} \otimes v_{\sigma(s)}
  \big),
  \]
where $n \ge p > q > r > s \ge -n$ with $p,q,r,s \equiv n \, (\mathrm{mod}\,2)$
and $\epsilon\colon S_4 \to \{\pm1\}$ is the sign homomorphism. We usually
abbreviate $v_p \wedge v_q \wedge v_r \wedge v_s$ by $[p,q,r,s]$.  The action
of $L \in \sltwo$ satisfies the derivation property,
  \begin{equation} \label{commutingformula}
  \begin{array}{r}
  L . ( v_p \wedge v_q \wedge v_r \wedge v_s )
  =
  L . v_p \wedge v_q \wedge v_r \wedge v_s
  +
  v_p \wedge L . v_q \wedge v_r \wedge v_s
  \qquad
  \\
  {}
  +
  v_p \wedge v_q \wedge L . v_r \wedge v_s
  +
  v_p \wedge v_q \wedge v_r \wedge L . v_s,
  \end{array}
  \end{equation}
and hence the \textbf{weight} of the quadruple $T = [p,q,r,s]$ is $w(T) =
p+q+r+s$. The \textbf{standard order} of the quadruples is given by decreasing
weight, and within each weight by reverse lex order: $T = [p,q,r,s]$ precedes
$T' = [p',q',r',s']$ if and only if either $w(T) > w(T')$ or $w(T) = w(T')$ and
$t > t'$ where $t, t'$ are the components of $T, T'$ in the leftmost position
where the components are not equal.
\end{definition}

\begin{example} \label{tensorbasis}
The quadruples in the tensor basis of $\Lambda^4 V(n)$ for $n = 4, 6, 8$ are
displayed in Tables \ref{tensorweightvectorbasistable4},
\ref{tensorbasistable6}, \ref{tensorbasistable8} in standard order.
\end{example}

\begin{remark} \label{completedecomposition}
If we apply Lemma \ref{multiplicityformula} to Tables
\ref{tensorweightvectorbasistable4}, \ref{tensorbasistable6},
\ref{tensorbasistable8} then we obtain the decomposition of $\Lambda^4 V(n)$
for $n = 4, 6, 8$ as a direct sum of irreducible representations:
  \begin{align*}
  \Lambda^4 V(4)
  &\cong
  V(4),
  \qquad
  \Lambda^4 V(6)
  \cong
  V(12) \oplus V(8) \oplus V(6) \oplus V(4) \oplus V(0).
  \\
  \Lambda^4 V(8)
  &\cong
  V(20) {\oplus}
  V(16) {\oplus}
  V(14) {\oplus}
  2 V(12) {\oplus}
  V(10) {\oplus}
  2 V(8) {\oplus}
  V(6) {\oplus}
  2 V(4)
  {\oplus}
  V(0).
  \end{align*}
Similar computations establish the decomposition of $\Lambda^4 V(10)$:
  \begin{align*}
  \Lambda^4 V(10)
  &\cong
  V(28) \oplus
  V(24) \oplus
  V(22) \oplus
  2 V(20) \oplus
  V(18) \oplus
  3 V(16) \oplus
  2 V(14)
  \\
  &\quad
  \oplus
  3 V(12) \oplus
  2 V(10) \oplus
  3 V(8) \oplus
  V(6) \oplus
  3 V(4) \oplus
  V(0).
  \end{align*}
\end{remark}

The next step is to determine the highest weight vectors for the irreducible
summands of $\Lambda^4 V(n)$ as linear combinations of the quadruples in the
tensor basis.

\begin{lemma} \label{topvector}
The quadruple $[n,n{-}2,n{-}4,n{-}6]$ is the quadruple with highest weight in
$\Lambda^4 V(n)$ and is a highest weight vector for the summand $V(4n{-}12)$.
\end{lemma}

\begin{proof}
This follows directly from equations \eqref{Eaction} and
\eqref{commutingformula}.
\end{proof}

\begin{example}
For $n = 4$ we have $\Lambda^4 V(4) \cong V(4)$, and so the quadruple
$[4,2,0,-2]$ is the only highest weight vector in $\Lambda^4 V(4)$. If we
identify $[4,2,0,-2]$ with the highest weight vector $v_4$ of $V(4)$, and
repeatedly apply $F$ using equations \eqref{Faction} and
\eqref{commutingformula}, then we obtain the weight vectors of $\Lambda^4 V(4)$
corresponding to the basis vectors $v_2$, $v_0$, $v_{-2}$, $v_{-4}$ of $V(4)$:
  \begin{align*}
  &
  v_4 = [4,2,0,-2], \quad
  v_2 = F . v_4 = 4 [4,2,0,-4 ], \quad
  v_0 = \frac{1}{2!} F^2 . v_4 = 6 [4,2,-2,-4 ],
  \\
  &
  v_{-2} = \frac{1}{3!} F^3 . v_4 = 4 [4,0,-2,-4 ], \quad
  v_{-4} = \frac{1}{4!} F^4 . v_4 = [2,0,-2,-4 ].
  \end{align*}
The matrix expressing the weight vectors in $V(4)$ in terms of the quadruples
in $\Lambda^4 V(4)$ is $C = \mathrm{diag}(1,4,6,4,1)$. The matrix expressing
the quadruples in $\Lambda^4 V(4)$ in terms of the weight vectors in $V(4)$ is
$C^{-1} = \mathrm{diag}(1,\frac14,\frac16,\frac14,1)$. We now have the
structure constants for the $\sltwo$-invariant alternating quaternary algebra
structure on $V(4)$, which we denote by $[v_p,v_q,v_r,v_s]$:
  \begin{alignat*}{3}
  [v_4,v_2,v_0,v_{-2}] &= v_4,
  &\quad
  [v_4,v_2,v_0,v_{-4}] &= \tfrac14 v_4,
  &\quad
  [v_4,v_2,v_{-2},v_{-4}] &= \tfrac16 v_4,
  \\
  [v_4,v_0,v_{-2},v_{-4}] &= \tfrac14 v_4,
  &\quad
  [v_2,v_0,v_{-2},v_{-4}] &= v_4.
  \end{alignat*}
The LCM of the denominators of the coefficients is 12. Taking $a = \root 3 \of
{12}$ and setting $v'_t = v_t/a$, we obtain integral structure constants (see
Table \ref{structureconstants4}):
  \begin{alignat*}{3}
  [v'_4,v'_2,v'_0,v'_{-2}] &= 12 v'_4,
  &\quad
  [v'_4,v'_2,v'_0,v'_{-4}] &= 3 v'_4,
  &\quad
  [v'_4,v'_2,v'_{-2},v'_{-4}] &= 2 v'_4,
  \\
  [v'_4,v'_0,v'_{-2},v'_{-4}] &= 3 v'_4,
  &\quad
  [v'_2,v'_0,v'_{-2},v'_{-4}] &= 12 v'_4.
  \end{alignat*}
\end{example}

In general, for all other weights $w < 4n{-}12$ we need to find a basis for the
subspace of highest weight vectors of weight $w$ in $\Lambda^4 V(n)$. The
dimension of this subspace is the multiplicity of $V(w)$ as a summand of
$\Lambda^4 V(n)$.

\begin{definition} \label{definitionEmatrix}
Suppose that $4n{-}14 \ge w \ge 0$ ($w$ even). Let $d(w)$ be the dimension of
the weight space of weight $w$ in $\Lambda^4 V(n)$: the number of quadruples of
weight $w$. We define the matrix $E^{(n)}_w$ of size $d(w{+}2) \times d(w)$ by
setting the $(i,j)$ entry equal to the coefficient of the $i$-th quadruple of
weight $w{+}2$ in the expression for the action of $E \in \sltwo$ on the $j$-th
quadruple of weight $w$.  We call this the \textbf{E-action matrix} for weight
$w$ of $\Lambda^4 V(n)$; the nonzero vectors in its nullspace are the highest
weight vectors of weight $w$ in $\Lambda^4 V(n)$. We compute the row canonical
form (RCF) and extract the canonical integral basis (CIB) by setting the free
variables equal to the standard unit vectors, clearing denominators, and
canceling common factors.
\end{definition}

\begin{example} \label{highestweightvectors6}
For $\Lambda^4 V(6)$ we use the weight space basis of Table
\ref{tensorbasistable6} and obtain
  \allowdisplaybreaks
  \begin{align*}
  E^{(6)}_8
  &=
  \left[ \begin{array}{rr}
  2 & 4
  \end{array} \right]
  \xrightarrow{\mathrm{RCF}}
  \left[ \begin{array}{rr}
  1 & 2
  \end{array} \right]
  \xrightarrow{\mathrm{CIB}}
  \left[ \begin{array}{rr}
  -2 & 1
  \end{array} \right]
  \\[2pt]
  E^{(6)}_6
  &=
  \left[ \begin{array}{rrr}
  1 &\!\! 4 &\!\! . \\
  . &\!\! 2 &\!\! 5
  \end{array} \right]
  \xrightarrow{\mathrm{RCF}}
  \left[ \begin{array}{rrr}
  1 &\!\! . &\!\! -10 \\
  . &\!\! 1 &\!\! 5/2
  \end{array} \right]
  \xrightarrow{\mathrm{CIB}}
  \left[ \begin{array}{rrr}
  20 &\!\! -5 &\!\! 2
  \end{array} \right]
  \\[2pt]
  E^{(6)}_4
  &=
  \left[ \begin{array}{rrrr}
  4 &\!\! . &\!\! . &\!\! . \\
  1 &\!\! 3 &\!\! 5 &\!\! . \\
  . &\!\! . &\!\! 2 &\!\! 6
  \end{array} \right]
  \xrightarrow{\mathrm{RCF}}
  \left[ \begin{array}{rrrr}
  1 &\!\! . &\!\! . &\!\! . \\
  . &\!\! 1 &\!\! . &\!\! -5 \\
  . &\!\! . &\!\! 1 &\!\! 3
  \end{array} \right]
  \xrightarrow{\mathrm{CIB}}
  \left[ \begin{array}{rrrr}
  . &\!\! 5 &\!\! -3 &\!\! 1
  \end{array} \right]
  \\[2pt]
  E^{(6)}_0
  &=
  \left[ \begin{array}{rrrrr}
  2 &\!\! 5 &\!\! . &\!\! . &\!\! . \\
  . &\!\! 3 &\!\! . &\!\! 6 &\!\! . \\
  . &\!\! 1 &\!\! 4 &\!\! . &\!\! 6 \\
  . &\!\! . &\!\! . &\!\! 1 &\!\! 3
  \end{array} \right]
  \xrightarrow{\mathrm{RCF}}
  \left[ \begin{array}{rrrrr}
  1 &\!\! . &\!\! . &\!\! . &\!\! 15 \\
  . &\!\! 1 &\!\! . &\!\! . &\!\! -6 \\
  . &\!\! . &\!\! 1 &\!\! . &\!\! 3 \\
  . &\!\! . &\!\! . &\!\! 1 &\!\! 3
  \end{array} \right]
  \xrightarrow{\mathrm{CIB}}
  \left[ \begin{array}{rrrrr}
  -15 &\!\! 6 &\!\! -3 &\!\! -3 &\!\! 1
  \end{array} \right]
  \end{align*}
\end{example}

\begin{example} \label{highestweightvectors8}
For $\Lambda^4 V(8)$ we use the weight space basis of Table
\ref{tensorbasistable8} and obtain
  \allowdisplaybreaks
  \begin{align*}
  E^{(8)}_{16}
  &=
  \left[ \begin{array}{rr}
  4 &\three 6
  \end{array} \right]
  \xrightarrow{\mathrm{RCF}}
  \left[ \begin{array}{rr}
  1 &\three 3/2
  \end{array} \right]
  \xrightarrow{\mathrm{CIB}}
  \left[ \begin{array}{rr}
  -3 &\three 2
  \end{array} \right]
  \\[4pt]
  E^{(8)}_{14}
  &=
  \left[ \begin{array}{rrr}
  3 &\three 6 &\three . \\
  . &\three 4 &\three 7
  \end{array} \right]
  \xrightarrow{\mathrm{RCF}}
  \left[ \begin{array}{rrr}
  1 &\three . &\three -7/2 \\
  . &\three 1 &\three 7/4
  \end{array} \right]
  \xrightarrow{\mathrm{CIB}}
  \left[ \begin{array}{rrr}
  14 &\three -7 &\three 4
  \end{array} \right]
  \\[4pt]
  E^{(8)}_{12}
  &=
  \left[ \begin{array}{rrrrr}
  2 &\three 6 &\three . &\three . &\three . \\
  . &\three 3 &\three 5 &\three 7 &\three . \\
  . &\three . &\three . &\three 4 &\three 8
  \end{array} \right]
  \xrightarrow{\mathrm{RCF}}
  \left[ \begin{array}{rrrrr}
  1 &\three . &\three -5 &\three  . &\three 14 \\
  . &\three 1 &\three 5/3 &\three . &\three -14/3 \\
  . &\three . &\three . &\three 1 &\three 2
  \end{array} \right]
  \xrightarrow{\mathrm{CIB}}
  \left[ \begin{array}{rrrrr}
   15 &\three  -5 &\three 3 &\three  . &\three . \\
  -42 &\three 14 &\three . &\three -6 &\three 3
  \end{array} \right]
  \\[4pt]
  E^{(8)}_{10}
  &=
  \left[ \begin{array}{rrrrrr}
  1 &\three 6 &\three . &\three . &\three . &\three . \\
  . &\three 2 &\three 5 &\three 7 &\three . &\three . \\
  . &\three . &\three 3 &\three . &\three 7 &\three . \\
  . &\three . &\three . &\three 3 &\three 5 &\three 8 \\
  . &\three . &\three . &\three . &\three . &\three 4
  \end{array} \right]
  \xrightarrow{\mathrm{RCF}}
  \left[ \begin{array}{rrrrrr}
  1 &\three . &\three . &\three . &\three 70 &\three . \\
  . &\three 1 &\three . &\three . &\three -35/3 &\three . \\
  . &\three . &\three 1 &\three . &\three 7/3 &\three . \\
  . &\three . &\three . &\three 1 &\three 5/3 &\three . \\
  . &\three . &\three . &\three . &\three . &\three 1
  \end{array} \right]
  \\
  &\qquad
  \xrightarrow{\mathrm{CIB}}
  \left[ \begin{array}{rrrrrr}
  -210 &\three 35 &\three -7 &\three -5 &\three 3 &\three .   \end{array} \right]
  \\[4pt]
  E^{(8)}_8
  &=
  \left[ \begin{array}{rrrrrrrr}
  6 &\three . &\three . &\three . &\three . &\three . &\three . &\three . \\
  1 &\three 5 &\three . &\three 7 &\three . &\three . &\three . &\three . \\
  . &\three 2 &\three 4 &\three . &\three 7 &\three . &\three . &\three . \\
  . &\three . &\three . &\three 2 &\three 5 &\three . &\three 8 &\three . \\
  . &\three . &\three . &\three . &\three 3 &\three 6 &\three . &\three 8 \\
  . &\three . &\three . &\three . &\three . &\three . &\three 3 &\three 5
  \end{array} \right]
  \xrightarrow{\mathrm{RCF}}
  \left[ \begin{array}{rrrrrrrr}
  1 &\three . &\three . &\three . &\three . &\three . &\three . &\three . \\
  . &\three 1 &\three . &\three . &\three . &\three 7 &\three . &\three 56/3 \\
  . &\three . &\three 1 &\three . &\three . &\three -7 &\three . &\three -14 \\
  . &\three . &\three . &\three 1 &\three . &\three -5 &\three . &\three -40/3 \\
  . &\three . &\three . &\three . &\three 1 &\three 2 &\three . &\three 8/3 \\
  . &\three . &\three . &\three . &\three . &\three . &\three 1 &\three 5/3
  \end{array} \right]
  \\
  &\qquad
  \xrightarrow{\mathrm{CIB}}
  \left[ \begin{array}{rrrrrrrr}
  . &\three -7 &\three 7 &\three 5 &\three -2 &\three 1 &\three . &\three . \\
  . &\three -56 &\three 42 &\three 40 &\three -8 &\three . &\three -5 &\three 3
  \end{array} \right]
  \\[4pt]
  E^{(8)}_6
  &=
  \left[ \begin{array}{rrrrrrrrr}
  5 &\three . &\three 7 &\three . &\three . &\three . &\three . &\three . &\three . \\
  1 &\three 4 &\three . &\three 7 &\three . &\three . &\three . &\three . &\three . \\
  . &\three 2 &\three . &\three . &\three 7 &\three . &\three . &\three . &\three . \\
  . &\three . &\three 1 &\three 5 &\three . &\three . &\three 8 &\three . &\three . \\
  . &\three . &\three . &\three 2 &\three 4 &\three 6 &\three . &\three 8 &\three . \\
  . &\three . &\three . &\three . &\three . &\three 3 &\three . &\three . &\three 8 \\
  . &\three . &\three . &\three . &\three . &\three . &\three 2 &\three 5 &\three . \\
  . &\three . &\three . &\three . &\three . &\three . &\three . &\three 3 &\three 6
  \end{array} \right]
  \xrightarrow{\mathrm{RCF}}
  \left[ \begin{array}{rrrrrrrrr}
  1 &\three . &\three . &\three . &\three . &\three . &\three . &\three . &\three -224/3 \\
  . &\three 1 &\three . &\three . &\three . &\three . &\three . &\three . &\three 70/3 \\
  . &\three . &\three 1 &\three . &\three . &\three . &\three . &\three . &\three 160/3 \\
  . &\three . &\three . &\three 1 &\three . &\three . &\three . &\three . &\three -8/3 \\
  . &\three . &\three . &\three . &\three 1 &\three . &\three . &\three . &\three -20/3 \\
  . &\three . &\three . &\three . &\three . &\three 1 &\three . &\three . &\three 8/3 \\
  . &\three . &\three . &\three . &\three . &\three . &\three 1 &\three . &\three -5 \\
  . &\three . &\three . &\three . &\three . &\three . &\three . &\three 1 &\three 2
  \end{array} \right]
  \\
  &\qquad
  \xrightarrow{\mathrm{CIB}}
  \left[ \begin{array}{rrrrrrrrr}
  224 &\three -70 &\three -160 &\three 8 &\three 20 &\three -8 &\three 15 &\three -6
  &\three 3
  \end{array} \right]
  \\[4pt]
  E^{(8)}_4
  &=
  \left[ \begin{array}{rrrrrrrrrrr}
  4 &\three . &\three 7 &\three . &\three . &\three . &\three . &\three . &\three . &\three .
  &\three . \\
  1 &\three 3 &\three . &\three 7 &\three . &\three . &\three . &\three . &\three . &\three .
  &\three . \\
  . &\three . &\three 5 &\three . &\three . &\three . &\three 8 &\three . &\three . &\three .
  &\three . \\
  . &\three . &\three 1 &\three 4 &\three 6 &\three . &\three . &\three 8 &\three . &\three .
  &\three . \\
  . &\three . &\three . &\three 2 &\three . &\three 6 &\three . &\three . &\three 8 &\three .
  &\three . \\
  . &\three . &\three . &\three . &\three 2 &\three 4 &\three . &\three . &\three . &\three 8
  &\three . \\
  . &\three . &\three . &\three . &\three . &\three . &\three 1 &\three 5 &\three . &\three .
  &\three . \\
  . &\three . &\three . &\three . &\three . &\three . &\three . &\three 2 &\three 4 &\three 6
  &\three . \\
  . &\three . &\three . &\three . &\three . &\three . &\three . &\three . &\three . &\three 3
  &\three 7
  \end{array} \right]
  \xrightarrow{\mathrm{RCF}}
  \left[ \begin{array}{rrrrrrrrrrr}
  1 &\three . &\three . &\three . &\three . &\three . &\three . &\three . &\three -28 &\three .
  &\three 98 \\
  . &\three 1 &\three . &\three . &\three . &\three . &\three . &\three . &\three 14 &\three .
  &\three -245/3 \\
  . &\three . &\three 1 &\three . &\three . &\three . &\three . &\three . &\three 16 &\three .
  &\three -56 \\
  . &\three . &\three . &\three 1 &\three . &\three . &\three . &\three . &\three -2 &\three .
  &\three 21 \\
  . &\three . &\three . &\three . &\three 1 &\three . &\three . &\three . &\three -4 &\three .
  &\three 14/3 \\
  . &\three . &\three . &\three . &\three . &\three 1 &\three . &\three . &\three 2 &\three .
  &\three -7 \\
  . &\three . &\three . &\three . &\three . &\three . &\three 1 &\three . &\three -10 &\three .
  &\three 35 \\
  . &\three . &\three . &\three . &\three . &\three . &\three . &\three 1 &\three 2 &\three .
  &\three -7 \\
  . &\three . &\three . &\three . &\three . &\three . &\three . &\three . &\three . &\three 1
  &\three 7/3 \\
  \end{array} \right]
  \\
  &\qquad
  \xrightarrow{\mathrm{CIB}}
  \left[ \begin{array}{rrrrrrrrrrr}
  28 &\three -14 &\three -16 &\three 2 &\three 4 &\three -2 &\three 10 &\three -2
  &\three 1 &\three . &\three . \\
  -294 &\three 245 &\three 168 &\three -63 &\three -14 &\three 21 &\three -105
  &\three 21 &\three . &\three -7 &\three 3
  \end{array} \right]
  \\[4pt]
  E^{(8)}_0
  &=
  \left[ \begin{array}{rrrrrrrrrrrr}
  2 &\three 7 &\three . &\three . &\three . &\three . &\three . &\three . &\three . &\three .
  &\three . &\three . \\
  . &\three 3 &\three 6 &\three . &\three . &\three 8 &\three . &\three . &\three . &\three .
  &\three . &\three . \\
  . &\three 1 &\three . &\three 6 &\three . &\three . &\three 8 &\three . &\three . &\three .
  &\three . &\three . \\
  . &\three . &\three 4 &\three . &\three . &\three . &\three . &\three 8 &\three . &\three .
  &\three . &\three . \\
  . &\three . &\three 1 &\three 3 &\three 5 &\three . &\three . &\three . &\three 8 &\three .
  &\three . &\three . \\
  . &\three . &\three . &\three . &\three 2 &\three . &\three . &\three . &\three . &\three 8
  &\three . &\three . \\
  . &\three . &\three . &\three . &\three . &\three 4 &\three . &\three 6 &\three . &\three .
  &\three . &\three . \\
  . &\three . &\three . &\three . &\three . &\three 1 &\three 3 &\three . &\three 6 &\three .
  &\three . &\three . \\
  . &\three . &\three . &\three . &\three . &\three . &\three . &\three 1 &\three 4 &\three .
  &\three 7 &\three . \\
  . &\three . &\three . &\three . &\three . &\three . &\three . &\three . &\three 2 &\three 5
  &\three . &\three 7 \\
  . &\three . &\three . &\three . &\three . &\three . &\three . &\three . &\three . &\three .
  &\three 2 &\three 4
  \end{array} \right]
  \xrightarrow{\mathrm{RCF}}
  \left[ \begin{array}{rrrrrrrrrrrr}
  1 &\three . &\three . &\three . &\three . &\three . &\three . &\three . &\three . &\three .
  &\three . &\three   224 \\
  . &\three 1 &\three . &\three . &\three . &\three . &\three . &\three . &\three . &\three .
  &\three . &\three   -64 \\
  . &\three . &\three 1 &\three . &\three . &\three . &\three . &\three . &\three . &\three .
  &\three . &\three    16 \\
  . &\three . &\three . &\three 1 &\three . &\three . &\three . &\three . &\three . &\three .
  &\three . &\three    12 \\
  . &\three . &\three . &\three . &\three 1 &\three . &\three . &\three . &\three . &\three .
  &\three . &\three    -8 \\
  . &\three . &\three . &\three . &\three . &\three 1 &\three . &\three . &\three . &\three .
  &\three . &\three    12 \\
  . &\three . &\three . &\three . &\three . &\three . &\three 1 &\three . &\three . &\three .
  &\three . &\three    -1 \\
  . &\three . &\three . &\three . &\three . &\three . &\three . &\three 1 &\three . &\three .
  &\three . &\three   -8 \\
  . &\three . &\three . &\three . &\three . &\three . &\three . &\three . &\three 1 &\three .
  &\three . &\three  -\frac32 \\
  . &\three . &\three . &\three . &\three . &\three . &\three . &\three . &\three . &\three 1
  &\three . &\three 2 \\
  . &\three . &\three . &\three . &\three . &\three . &\three . &\three . &\three . &\three .
  &\three 1 &\three 2
  \end{array} \right]
  \\
  &\qquad
  \xrightarrow{\mathrm{CIB}}
  \left[ \begin{array}{rrrrrrrrrrrr}
  -448 &\three 128 &\three -32 &\three -24 &\three 16 &\three -24 &\three 2 &\three 16
  &\three 3 &\three -4 &\three -4 &\three 2
  \end{array} \right]
  \end{align*}
\end{example}

\begin{definition} \label{weightvectorbasis}
We construct the \textbf{weight vector basis} of $\Lambda^4 V(n)$ as follows.
We first determine the highest weight vectors for each summand of $\Lambda^4
V(n)$ as described in Definition \ref{definitionEmatrix} and Examples
\ref{highestweightvectors6} and \ref{highestweightvectors8}. For each highest
weight vector $X$ (of weight $w$, say) we apply $F \in \sltwo$ repeatedly $w$
times to obtain weight vectors of weights $w{-}2, w{-}4, \hdots, -w$ forming a
basis of the summand isomorphic to $V(w)$:
  \[
  X, \quad F.X, \quad \frac{1}{2!} F^2.X, \quad \hdots, \quad \frac{1}{w!} F^w.X.
  \]
The set of all these weight vectors is the weight vector basis of $\Lambda^4
V(n)$. The \textbf{standard order} on this basis is as follows: We order the
weight vectors first by decreasing weight of the corresponding highest weight
vector and then by increasing power of $F$ within each summand. (When there is
more than one highest weight vector with the same weight, we order them as in
the canonical integral basis.)
\end{definition}

\begin{example}
The weight vector basis of $\Lambda^4 V(6)$ is displayed in Table
\ref{weightvectorbasistable6}. The weight vector basis of $\Lambda^4 V(8)$ is
displayed in Tables
\ref{weightvectorbasistable8part1}--\ref{weightvectorbasistable8part3}.
\end{example}

\begin{definition} \label{weightvectormatrix}
The \textbf{weight vector matrix} $C$ is the $\binom{n{+}1}{4} \times
\binom{n{+}1}{4}$ matrix which expresses the weight vector basis in terms of
the tensor basis: the $(i,j)$ entry is the coefficient of the $i$-th quadruple
in the $j$-th element of the weight vector basis.
\end{definition}

\begin{definition}
The \textbf{alternating quaternary algebra structure} on $V(n)$ is defined in
terms of structure constants as follows. The inverse $C^{-1}$ of the weight
vector matrix expresses the tensor basis in terms of the weight vector basis.
Let $[p,q,r,s]$ be the $j$-th quadruple in the tensor basis. Column $j$ of
$C^{-1}$ expresses $[p,q,r,s]$ as a linear combination of the elements of the
weight vector basis. Suppose that the highest weight vector for the summand of
$\Lambda^4 V(n)$ isomorphic to $V(n)$ is the $k$-th element of the weight
vector basis.  The entries of $C^{-1}$ in column $j$ and rows $i = k, \hdots,
k{+}n$ are the coefficients of the projection of $[p,q,r,s]$ onto the summand
isomorphic to $V(n)$.  Let $P \colon \Lambda^4 V(n) \to V(n)$ be this
surjective homomorphism of $\sltwo$-modules. The quadruple $[p,q,r,s]$ has
weight $p{+}q{+}r{+}s$, and the summand isomorphic to $V(n)$ has (at most) one
basis vector of this weight. Hence there is at most one nonzero entry in
$C^{-1}$ in column $j$ and rows $i = k, \hdots, k{+}n$. If all these entries
are zero then $P( v_p \wedge v_q \wedge v_r \wedge v_s ) = 0$. If there is a
nonzero entry, say in row $\ell$, then $P( v_p \wedge v_q \wedge v_r \wedge v_s
) = (C^{-1})_{i\ell} \, v_{p+q+r+s}$. The resulting alternating quaternary
algebra structure on $V(n)$ is denoted by $[v_p,v_q,v_r,v_s]$ and defined by
$[v_p,v_q,v_r,v_s] = P( v_p \wedge v_q \wedge v_r \wedge v_s )$.
\end{definition}

\begin{example}
For $n = 6$, from rows 23 to 29 of the matrix inverse we obtain the structure
constants for the alternating quaternary algebra structure on $V(6)$. We ignore
the equations for which $|p{+}q{+}r{+}s| > 6$ since in these cases the result
is obviously zero: there is no vector of the given weight in $V(6)$. The LCM of
the denominators of the coefficients is 120, so we can scale the basis vectors
of $V(n)$ by setting $v'_t = v_t / \root 3 \of {120}$ to obtain integral
structure constants. We use the shorthand notation $[p,q,r,s] = c$ for the
equation $[v'_p,v'_q,v'_r,v'_s] = c v'_{p+q+r+s}$. After making these changes
we obtain the structure constants in Table \ref{structureconstants6}.
\end{example}

\begin{remark}
In this paper we also study the alternating quaternary algebra structures on
$V(10)$ but in this case the tables of the tensor basis, the weight vector
basis, and the structure constants are too large to include.
\end{remark}


\section{Polynomial identities and computational methods}

\begin{definition}
The nonassociative polynomial $I(x_1,\dots, x_n)$ is a \textbf{polynomial
identity} for the algebra $A$ if $I( x_1, \hdots, x_n ) = 0$ for all
$x_1,\dots, x_n\in A$.
\end{definition}

We are concerned with multilinear polynomial identities of degree $n$ for an
alternating quaternary algebra.  This means that each term consists of a
coefficient and a monomial, where the monomial is some permutation of $n$
distinct variables $x_1$, $x_2$, $\hdots$, $x_n$ together with some association
type, by which we mean some placement of brackets representing the quaternary
operation. For any $k$-ary operation, the degree of a monomial has the form $n
= 1 + \ell(k{-}1)$ where $\ell$ is the number of occurrences of the operation
in the monomial. Thus for a quaternary operation the degree of a monomial is
congruent to 1 modulo 3.

In degree 4, we have only the single association type $[-,-,-,-]$; the
alternating property implies that we have only the single monomial
$[x_1,x_2,x_3,x_4]$. In degree 7, the alternating property implies that we have
only one association type $[[-,-,-,-],-,-,-]$ and only $\binom74 = 35$ distinct
multilinear monomials,
  \[
  [
  [x_{\sigma(1)}, x_{\sigma(2)}, x_{\sigma(3)}, x_{\sigma(4)} ],
  x_{\sigma(5)}, x_{\sigma(6)}, x_{\sigma(7)}
  ],
  \]
where $\sigma \in S_7$ is a $(4,3)$-shuffle; that is, $1 \le \sigma(1) <
\sigma(2) < \sigma(3) < \sigma(4) \le 7$ and $1 \le \sigma(5) < \sigma(6) <
\sigma(7) \le 7$. In degree 10, the alternating property implies that we have
two association types,
  \[
  [[[-,-,-,-],-,-,-],-,-,-],
  \qquad
  [[-,-,-,-],[-,-,-,-],-,-],
  \]
and that the corresponding numbers of distinct multilinear monomials are
  \[
  \binom{10}{4,3,3} + \frac12 \binom{10}{4,4,2}
  =
  4200 + 1575
  =
  5775.
  \]
The number $T(\ell)$ of association types which involve $\ell$ occurrences of
an alternating quaternary product is equal to the number of rooted trees with
$\ell$ internal vertices in which each internal vertex has four children; see
Sloane \cite{Sloane}, sequence A036718. The first terms in this sequence are
  \[
  \begin{array}{lrrrrrrrrrrrrrrr}
  \ell & 0 & 1 & 2 & 3 & 4 & 5 & 6 & 7 & 8 & 9 & 10 & 11 & 12 & 13 \\
  T(\ell) & 1 & 1 & 1 & 2 & 4 & 9 & 19 & 45 & 106 & 260 & 643 & 1624 & 4138 & 10683
  \end{array}
  \]
The monomials $[[[a,b,c,d],e,f,g],h,i,j]$ and $[[a,b,c,d],[e,f,g,h],i,j]$
correspond respectively to the following trees:
  \begin{center}
  \Tree [ . [ . [ . $a$ $b$ $c$ $d$ ] $e$ $f$ $g$ ] $h$ $i$ $j$ ]
  \\
  \Tree [ . [ . $a$ $b$ $c$ $d$ ] [ . $e$ $f$ $g$ $h$ ] $i$ $j$ ]
  \end{center}

\subsection{Fill-and-reduce algorithm}

Suppose we wish to find all the multilinear polynomial identities of degree $n$
satisfied by an algebra $A$ of dimension $d$.  We assume that we have chosen a
basis of $A$ and that we know the structure constants with respect to this
basis. We write $m$ for the number of distinct multilinear monomials of degree
$n$, and we assume that these monomials are ordered in some way. We create a
matrix $X$ of size $(m+d) \times m$ and initialize it to zero; the columns of
$M$ correspond bijectively to the monomials. We choose two parameters $p$ and
$s$: we generate pseudorandom integers in the range 0 to $p{-}1$.

We perform the following ``fill-and-reduce'' algorithm until the rank of the
matrix $X$ has remained stable for $s$ iterations:
  \begin{enumerate}
  \item Generate $n$ pseudorandom elements $a_1, \dots, a_n$ of $A$:
      vectors of length $d$ in which the components are integers in the
      range 0 to $p{-}1$.
  \item For $j$ from 1 to $m$ do:
    \begin{enumerate}
    \item Evaluate monomial $j$ by setting the variable $x_k$ equal to
        the vector $a_k$ for $k = 1, \dots, n$ and using the structure
        constants for $A$, obtaining another vector $b$ of length $d$.
    \item Store $b$ as a column vector in column $j$ of $X$ in rows
        $m{+}1$ to $m{+}d$.
    \end{enumerate}
  \item Compute the row canonical form of $X$; the last $d$ rows are now
      zero.
  \end{enumerate}
After this process has terminated, if the nullspace of $X$ is not zero then it
contains candidates for polynomial identities satisfied by $A$. We usually find
that $s = 10$ is a sufficient number of iterations after the rank has
stabilized, but we use $s = 100$ to increase our confidence in the results. We
now compute the canonical integral basis of the nullspace.

\subsection{Module generators algorithm} We assume that we have the canonical
integral basis of the nullspace of the matrix $X$ used in the fill-and-reduce
algorithm. Let $r$ be the number of these basis vectors; these are the
coefficient vectors of polynomial identities satisfied by the algebra $A$.
These identities are linearly independent over $\mathbb{Q}$ but they are not
necessarily independent as generators of the $S_n$-module of identities. We
want to find a minimal set of module generators. We start by sorting the basis
vectors by increasing Euclidean norm. We create a new matrix $M$ of size
$(m{+}n!) \times m$ and initialize it to zero.

We perform the following algorithm for $k$ from 1 to $r$:
  \begin{enumerate}
  \item For $i$ from 1 to $n!$ apply permutation $i$ of the variables $\{
      x_1, \dots, x_n \}$ to basis identity $k$ and store the result in row
      $m{+}i$ of $M$. More precisely, for each nonzero coefficient $c$ of
      the identity, apply permutation $i$ to the corresponding monomial,
      use the alternating property to straighten the monomial, obtain a
      standard basis monomial (with index $j$ say) and store $\pm c$ in
      position $(m+i,j)$ of $M$ (straightening may introduce a sign
      change).
  \item Compute the row canonical form of $M$. If the rank has increased
      from the previous iteration, then we record basis identity $k$ as a
      new generator.
  \end{enumerate}


\section{Multiplicity 1: representation $V(4)$}

In this section and the next we describe computer searches for polynomial
identities satisfied by the two irreducible representations of $\sltwo$ which
admit an alternating quaternary structure which is unique up to a scalar
multiple; we determine all their identities of degree $7$, and show that there
are no new identities in degree 10. For all our calculations we use the
computer algebra system \texttt{Maple}, especially the packages
\texttt{LinearAlgebra} and \texttt{LinearAlgebra[Modular]}.

\begin{theorem} \label{TheoremV(4)}
The vector space of multilinear polynomial identities in degree $7$ for the
alternating quaternary structure on $V(4)$ has dimension 21.
\end{theorem}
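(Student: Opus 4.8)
The plan is to identify the space of degree-$7$ identities with $\ker\Phi$, where $\Phi$ sends each rational combination of the $35$ multilinear monomials to the corresponding multilinear map $V(4)^7\to V(4)$; the claim is then equivalent to $\operatorname{rank}\Phi=14$. First I would record the $S_7$-module structure of the monomial space. The $35$ monomials are parametrized by the $(4,3)$-shuffles, and straightening via the alternating property identifies this space with the induced module $\operatorname{Ind}_{S_4\times S_3}^{S_7}(\operatorname{sgn}\boxtimes\operatorname{sgn})$. By the dual Pieri rule (adding a vertical $3$-strip to the column $(1^4)$) this decomposes into the four $S_7$-irreducibles indexed by $(1^7),(2,1^5),(2^2,1^3),(2^3,1)$, of dimensions $1,6,14,14$, which sum to $35$. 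Permuting the seven variables commutes with evaluation for any algebra, so $\Phi$ is $S_7$-equivariant and $\ker\Phi$ is an $S_7$-submodule.

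The crux is to pin down $\operatorname{rank}\Phi$ exactly, and I would do this by exact evaluation rather than by trusting random specializations. Since each basis vector $v_t$ of $V(4)$ is an $H$-eigenvector and $\sltwo$ acts by derivations, evaluating any monomial on a tuple of weight vectors $(v_{w_1},\dots,v_{w_7})$ returns a scalar multiple of $v_{w_1+\cdots+w_7}$, which vanishes unless the total weight lies in $\{-4,-2,0,2,4\}$. Using the explicit structure constants for the quaternary product on $V(4)$ computed earlier, each such evaluation is a finite rational calculation. Assembling these scalars into a matrix whose columns are indexed by the $35$ monomials and whose rows are indexed by the admissible weight tuples, I would compute its rank over $\mathbb{Q}$; by multilinearity the weight tuples span all inputs, so this rank equals $\operatorname{rank}\Phi$. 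This is exactly the content extracted by the fill-and-reduce algorithm, made deterministic.

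Granting that this rank is $14$, two conclusions follow. Immediately $\dim\ker\Phi=35-14=21$, the stated dimension. Moreover, among the subsets of the multiset $\{1,6,14,14\}$ the only one summing to $14$ is a single copy of $14$, so by semisimplicity the image of $\Phi$ is one irreducible of dimension $14$ and the identity module is forced to be $(1^7)\oplus(2,1^5)\oplus(2^2,1^3)$ of dimension $1+6+14=21$. This refinement both confirms the count and predicts the $S_7$-structure of the canonical integral basis. The single genuine obstacle I anticipate is the exact rank computation: the matrix is large and the structure constants carry denominators, so one must certify the rank over $\mathbb{Q}$ (or over a prime avoiding the zeros of the relevant minors) to guarantee that the stabilized value $14$ is not an artifact of an unlucky specialization.
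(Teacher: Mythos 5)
Your proposal is correct and its computational core is the same as the paper's: both reduce the theorem to showing that the $S_7$-equivariant evaluation map $\Phi$ from the $35$-dimensional span of the $(4,3)$-shuffle monomials to multilinear maps $V(4)^7 \to V(4)$ has rank $14$, whence $\dim\ker\Phi = 35 - 14 = 21$. Where you differ is in how the rank is certified and in the representation-theoretic packaging. The paper uses the fill-and-reduce algorithm with pseudorandom integer arguments; each filled row is a genuine linear constraint on identities, so the stabilized rank $14$ rigorously certifies $\operatorname{rank}\Phi \ge 14$ (hence $\dim\ker\Phi \le 21$), while equality rests on the heuristic of $s=100$ stable iterations. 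Your exhaustive evaluation on tuples of weight vectors, pruned by the observation that only total weights in $\{0,\pm 2,\pm 4\}$ can contribute, replaces that heuristic by a finite deterministic computation over $\mathbb{Q}$ --- a genuine, if routine, strengthening. Your identification of the monomial space with $\operatorname{Ind}_{S_4\times S_3}^{S_7}(\operatorname{sgn}\boxtimes\operatorname{sgn}) \cong S^{(1^7)}\oplus S^{(2,1^5)}\oplus S^{(2^2,1^3)}\oplus S^{(2^3,1)}$ is correct and absent from the paper; since this module is multiplicity-free with constituent dimensions $1,6,14,14$ and $\Phi$ is equivariant, a rank of $14$ does force the image to be a single $14$-dimensional irreducible, which buys a consistency check on the numerical rank and the $S_7$-module structure of the identity space (something the paper only recovers later via the module generators algorithm). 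One small overstatement: the dimension count alone does not determine \emph{which} of $S^{(2^2,1^3)}$ and $S^{(2^3,1)}$ lies in the kernel, so ``forced to be $(1^7)\oplus(2,1^5)\oplus(2^2,1^3)$'' should read ``forced to be $S^{(1^7)}\oplus S^{(2,1^5)}$ plus exactly one of the two $14$-dimensional constituents.'' Finally, as you acknowledge, neither your argument nor the paper's escapes the need to actually carry out the rank-$14$ computation; the paper's randomized version has the advantage of scaling directly to the degree-$10$ case, where the monomial module is far from multiplicity-free and your representation-theoretic shortcut would not pin down the kernel.
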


\begin{proof}
We use the fill-and-reduce algorithm with $n = 7$, $d = 5$, $m = 35$, $p = 10$
and $s = 100$. We create a matrix $X$ of size $40\times 35$ consisting of an
upper block of size $35 \times 35$ and a lower block of size $5 \times 35$; the
columns are labeled by the ordered basis of multilinear monomials in degree $7$
for an alternating quaternary operation. We generate seven random elements of
$V(4)$ and evaluate the $35$ monomials on these seven elements. We put the $35$
resulting elements of $V(4)$ as column vectors into the lower block of the
matrix. Each of the last five rows of the matrix now contains a linear relation
that must be satisfied by the coefficients of any identity for the alternating
quaternary structure on $V(4)$. We repeat the fill-and-reduce process until the
rank of the matrix stabilizes. The rank reached 14 and did not increase
further. Therefore the nullspace of the matrix has dimension 21.
\end{proof}

\begin{theorem}
Every multilinear polynomial identity in degree 7 for the alternating
quaternary structure on $V(4)$ is a consequence of the alternating property in
degree 4 together with the quaternary derivation identity in degree 7:
  \begin{align*}
  &
  [a,b,c,[d,e,f,g]]
  =
  \\
  &
  [[a,b,c,d],e,f,g] + [d,[a,b,c,e],f,g] + [d,e,[a,b,c,f],g] + [d,e,f,[a,b,c,g]]
  \end{align*}
\end{theorem}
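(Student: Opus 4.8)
The plan is to combine the dimension count already established in Theorem~\ref{TheoremV(4)} with an explicit computation of the $S_7$-module generated by the derivation identity. Theorem~\ref{TheoremV(4)} tells us that the space of all degree-7 multilinear identities for $V(4)$ has dimension 21, expressed in the basis of 35 straightened monomials of the single association type $[[-,-,-,-],-,-,-]$; since reducing to these 35 monomials is exactly what the alternating property in degree 4 accomplishes, that part of the hypothesis is already built into the ambient space. It therefore suffices to prove two things: (i) the derivation identity is itself an identity for $V(4)$, so that all of its consequences land in this 21-dimensional space; and (ii) the degree-7 consequences of the derivation identity already span a subspace of dimension 21, hence all of it.

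For (i) I would first rewrite the derivation identity in the standard monomial basis. Each of its terms can be brought into the association type $[[-,-,-,-],-,-,-]$ by the alternating property: the term $[a,b,c,[d,e,f,g]]$ has its inner bracket in the fourth slot and the terms $[d,[a,b,c,e],f,g]$, $[d,e,[a,b,c,f],g]$ have their inner brackets in the second and third slots, so moving the inner bracket to the front introduces a sign equal to that of the corresponding permutation. This yields a single coefficient vector in the 35-dimensional monomial space. I would then verify that this vector is annihilated by $V(4)$, either by substituting the structure constants of Table~\ref{structureconstants4} directly or by checking that it lies in the nullspace of the fill-and-reduce matrix $X$ from Theorem~\ref{TheoremV(4)}. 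Conceptually this merely records that $V(4)$ satisfies the Filippov identity, consistent with its being the unique simple $4$-Lie algebra.

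For (ii) I would run the module generators algorithm with the straightened derivation identity as input. For each of the $7!$ permutations of $\{a,b,c,d,e,f,g\}$ I would permute the variables, re-straighten each resulting monomial via the alternating property (tracking the sign changes), and store the coefficient vector as a row of a matrix; the rank of the row canonical form is then the dimension of the $S_7$-submodule generated by the derivation identity. The assertion to be confirmed is that this rank equals 21.

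The main obstacle is precisely step (ii): one must verify by this rank computation that a \emph{single} generator, the derivation identity, already produces the full space, with no further generators required. This is the point at which the argument could fail---a proper submodule would force additional generators---so establishing that the rank is exactly 21 is what closes the proof. Once the two dimensions coincide, the module of degree-7 consequences of the derivation identity equals the entire 21-dimensional space of degree-7 identities, which is exactly the claim of the theorem.
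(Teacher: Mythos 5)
Your proposal is correct and takes essentially the same approach as the paper: both rest on the module generators algorithm and the key rank computation showing that the $S_7$-module generated by a single identity is all of the 21-dimensional nullspace. The only difference is direction --- the paper applies the algorithm to the canonical nullspace basis sorted by norm, finds that the first basis identity already generates everything, and then recognizes that identity as the straightened derivation identity, whereas you start from the derivation identity and must separately verify (your step (i)) that it lies in the nullspace, a check the paper gets for free.
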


\begin{proof}
We use the module generators algorithm, slightly modified to use less memory.
We create a matrix of size $59 \times 35$ with an upper block of size $35
\times 35$ and a lower block of size $24 \times 35$. We generate all 5040
permutations of seven letters and divide them into 210 groups of 24
permutations. For each of the 21 basis identities, we perform the following
computation. For each group of permutations, we apply the corresponding 24
permutations to the identity, obtain 24 new identities which we store in the
lower block of the matrix, and compute the row canonical form of the matrix.
After all 210 groups of permutations have been processed, the rank of the
matrix is equal to the dimension of the module generated by all the identities
up to and including the current identity. After the first identity has been
processed, the rank of the matrix is 21, which is the same as the entire
nullspace; the rank does not increase as we process the remaining identities.
Therefore every identity is a consequence of the first identity, which has the
form
  \begin{align*}
  &
  [[a,b,c,d],e,f,g] - [[a,b,c,e],d,f,g] + [[a,b,c,f],d,e,g] - [[a,b,c,g],d,e,f]
  \\
  &
  + [[d,e,f,g],a,b,c]
  \end{align*}
Applying the alternating property of the quaternary product, we see that this
identity can be written in the stated form.
\end{proof}

\begin{remark}
The alternating property in degree 4 and the quaternary derivation identity
together define the case $n = 4$ of the variety of $n$-Lie algebras introduced
by Filippov \cite{Filippov1}. Thus the isomorphism $\Lambda^4 V(4) \cong V(4)$
makes $V(4)$ into an alternating quaternary algebra isomorphic to one of the
5-dimensional 4-Lie algebras in Filippov's classification of
$(n{+}1)$-dimensional $n$-Lie algebras.
\end{remark}

For $n = 7$ we can use rational arithmetic for these calculations since the
matrix $X$ is not large. We can extend these calculations to $n = 10$, but we
need to use modular arithmetic to save memory, since the matrix $X$ is very
large: it has 5775 columns (the number of alternating quaternary monomials in
degree 10). The fill-and-reduce algorithm stabilizes at rank 660, and so the
nullspace has dimension 5115. We need to determine which of these identities in
degree 10 are consequences of the quaternary derivation identity in degree 7,
which we denote by $D(a,b,c,d,e,f,g)$. Since this polynomial alternates in
$a,b,c$ we only need to consider six consequences in degree 10, using the
variables $\{a,b,c,d,e,f,g,h,i,j\}$:
  \begin{alignat*}{3}
  &
  D([a,h,i,j],b,c,d,e,f,g), &\;
  &
  D(a,b,c,[d,h,i,j],e,f,g), &\;
  &
  D(a,b,c,d,[e,h,i,j],f,g),
  \\
  &
  D(a,b,c,d,e,[f,h,i,j],g), &\;
  &
  D(a,b,c,d,e,f,[g,h,i,j]), &\;
  &
  [D(a,b,c,d,e,f,g),h,i,j].
  \end{alignat*}
We use a modification of the module generators algorithm to determine that
these identities generate a module of dimension 5115. Since this equals the
dimension of the nullspace from the fill-and-reduce algorithm, it follows that
the alternating quaternary structure on $V(4)$ satisfies no new identities in
degree 10; that is, every identity in degree 10 is a consequence of the known
identities in lower degrees. We used $p = 101$ for these calculations; since
the group algebra of $S_n$ is semisimple over any field of characteristic $p
> n$, and we are studying identities of degree $n = 10$, it follows that any
prime $p > 10$ would give the same dimensions.


\section{Multiplicity 1: representation $V(6)$}

\begin{theorem} \label{TheoremV(6)1}
The vector space of multilinear polynomial identities in degree 7 for the
alternating quaternary structure on $V(6)$ has dimension 1.
\end{theorem}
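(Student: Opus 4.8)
The plan is to apply the fill-and-reduce algorithm exactly as in the proof of Theorem \ref{TheoremV(4)}, with the parameters adjusted to $V(6)$. Here the degree is $n = 7$ and the dimension is $d = 7$, since $V(6)$ has dimension $7$. The number of distinct multilinear monomials is again $m = \binom{7}{4} = 35$: in degree $7$ the alternating property leaves only the single association type $[[-,-,-,-],-,-,-]$, and its monomials are indexed by the $(4,3)$-shuffles of $\{1,\dots,7\}$. First I would build the $(m{+}d)\times m = 42\times 35$ matrix $X$ consisting of an upper $35\times 35$ block and a lower $7\times 35$ block, with the columns indexed by the ordered basis of these $35$ monomials.

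The iterative step is to generate seven pseudorandom elements $a_1,\dots,a_7\in V(6)$ (integer vectors of length $7$ with entries in the range $0$ to $p{-}1$), evaluate each of the $35$ monomials on this tuple using the structure constants for $V(6)$ recorded in Table \ref{structureconstants6}, and store the resulting seven-dimensional vectors as columns in the lower block of $X$. Each of these seven rows records one scalar linear constraint that the coefficient vector of any genuine identity must satisfy. After computing the row canonical form, I would repeat until the rank has remained constant for $s = 100$ consecutive iterations. Because the matrix is small I can work in exact rational arithmetic, as was done for $V(4)$ in degree $7$. I expect the rank to stabilize at $34$, so that the nullspace of $X$ has dimension $35-34 = 1$, which is precisely the assertion of the theorem.

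The point that requires care is justifying that the stabilized value is the true dimension rather than merely an upper bound. Every random tuple produces only \emph{necessary} conditions on the coefficients of an identity, so the accumulated row space is always contained in the full space of such conditions; hence the computed nullspace always contains the true space of identities, and its dimension is a non-increasing upper bound that converges to the correct value as more tuples are added. The real danger is therefore stopping while the rank is still one short of its maximum, which would wrongly report dimension $2$ or higher, rather than overshooting. Running $s = 100$ iterations without any further increase after the rank reaches $34$ makes it overwhelmingly likely that every constraint has been captured. To remove all residual doubt I would extract the single canonical integral basis vector of the nullspace and verify symbolically, using the explicit multiplication table, that the corresponding degree-$7$ polynomial vanishes identically on $V(6)$; this confirms that the nullspace has dimension at least $1$ and, combined with the stabilized rank, that the identity space is exactly one-dimensional.
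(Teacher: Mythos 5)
Your proposal is correct and follows essentially the same route as the paper: the fill-and-reduce algorithm with $n=7$, $d=7$, $m=35$, a $42\times 35$ matrix, and rank stabilizing at $34$ to give a one-dimensional nullspace. Your added remark about confirming the lower bound by symbolically verifying that the single nullspace vector is a genuine identity is a sensible safeguard; the paper achieves the same certainty in Theorem \ref{TheoremV(6)2} and the referee's remark, which identify the nullspace vector as the quaternary alternating sum identity and prove directly that it vanishes on $V(6)$.
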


\begin{proof}
We use the fill-and-reduce algorithm with $n = 7$, $d = 7$, $m = 35$, $p = 10$
and $s = 100$. The details of the computations are similar to those described
in the proof of Theorem \ref{TheoremV(4)}. The rank reached 34 and did not
increase further. Therefore the nullspace of the matrix has dimension 1.
\end{proof}

\begin{theorem} \label{TheoremV(6)2}
Every multilinear polynomial identity in degree $7$ for the alternating
quaternary structure on $V(6)$ is a consequence of the alternating property in
degree 4 together with the quaternary alternating sum identity in degree 7:
  \[
  \sum_{\sigma \in S_7}
  \epsilon(\sigma) \,
  [[a^\sigma,b^\sigma,c^\sigma,d^\sigma],e^\sigma,f^\sigma,g^\sigma]
  \]
\end{theorem}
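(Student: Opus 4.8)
The plan is to combine the dimension count from Theorem \ref{TheoremV(6)1} with a short representation-theoretic argument showing that the alternating sum identity is a nonzero element of the one-dimensional space of identities. Since that space has dimension $1$, any nonzero identity lying in it automatically generates it; hence it suffices to check (i) that the alternating sum identity is genuinely satisfied by $V(6)$, so that it lies in the nullspace of the fill-and-reduce matrix, and (ii) that after applying the alternating property in degree $4$ to straighten it into the basis of $35$ standard monomials, the resulting coefficient vector is nonzero.

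For (i), I would observe that the expression $\sum_{\sigma \in S_7} \epsilon(\sigma)\,[[a^\sigma,b^\sigma,c^\sigma,d^\sigma],e^\sigma,f^\sigma,g^\sigma]$ defines a totally antisymmetric multilinear map $V(6)^{\otimes 7} \to V(6)$. Because the quaternary product is the $\sltwo$-invariant projection $\Lambda^4 V(6)\to V(6)$, this antisymmetrized map is $\sltwo$-equivariant, so it is an element of $\mathrm{Hom}_{\sltwo}\big(\Lambda^7 V(6), V(6)\big)$. Now $\dim V(6)=7$, so $\Lambda^7 V(6)$ is the one-dimensional top exterior power; since the weights $6,4,2,0,-2,-4,-6$ of $V(6)$ sum to $0$, we have $\Lambda^7 V(6)\cong V(0)$. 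Hence $\mathrm{Hom}_{\sltwo}\big(\Lambda^7 V(6), V(6)\big)\cong \mathrm{Hom}_{\sltwo}\big(V(0),V(6)\big)=0$, and the antisymmetrized map vanishes identically. This gives a conceptual, non-computational proof that the alternating sum identity holds on $V(6)$; alternatively one may verify it directly using the structure constants of Table \ref{structureconstants6}.

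For (ii), I would exploit the alternating property to collapse the sum over $S_7$. The term $\epsilon(\sigma)\,[[a^\sigma,\dots,d^\sigma],e^\sigma,f^\sigma,g^\sigma]$ is unchanged when $\sigma$ is composed on the right with a permutation of the first four positions or of the last three positions, because the sign of such a permutation exactly cancels the sign produced by the alternating inner or outer bracket. Thus the summand is constant on each coset of $S_4\times S_3$ in $S_7$, and the full sum equals $4!\,3!=144$ times the alternating sum $\sum_\tau \epsilon(\tau)\,[[a^\tau,b^\tau,c^\tau,d^\tau],e^\tau,f^\tau,g^\tau]$ taken over the $35$ distinct $(4,3)$-shuffles $\tau$. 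These $35$ monomials are exactly the standard basis monomials, so the straightened coefficient vector has entry $\pm 144$ in every coordinate and is in particular nonzero.

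Combining the two points, the alternating sum identity is a nonzero vector in the one-dimensional nullspace of Theorem \ref{TheoremV(6)1}, so it spans that nullspace; every degree-$7$ multilinear identity for $V(6)$ is therefore a scalar multiple of it, that is, a consequence of the alternating property together with the alternating sum identity. I expect the only genuine subtlety to be step (ii): one must rule out the possibility that antisymmetrization collapses the identity to zero after straightening, which would make the statement vacuous. The coset computation settles this, and as a computational cross-check one can run the module generators algorithm on the alternating sum identity and confirm that the module it generates already has dimension $1$.
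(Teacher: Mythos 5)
Your proposal is correct and rests on the same foundation as the paper's proof, namely the dimension count of Theorem \ref{TheoremV(6)1}: once the space of degree-$7$ identities is known to be one-dimensional, any nonzero identity in it spans it. The two verifications you supply are exactly the details the paper leaves implicit --- your point (i) is precisely the referee's argument recorded as a remark after the theorem (the antisymmetrized map factors through $\Lambda^7 V(6) \cong V(0)$, which admits no nonzero $\sltwo$-map to the irreducible $V(6)$), and your point (ii), the coset computation showing the straightened coefficient vector is $\pm 144$ in each of the $35$ coordinates and hence nonzero, correctly rules out the degenerate case that would make the statement vacuous.
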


\begin{proof}
Since the nullspace has dimension 1, this is an immediate corollary of Theorem
\ref{TheoremV(6)1}; we do not need to apply the module generators algorithm.
\end{proof}

\begin{remark}
The referee provided the following alternative proof. The quaternary
alternating sum identity is an alternating multilinear function of 7 variables.
Evaluating this function on the 7-dimensional space $V(6)$ gives a map
$\alpha\colon \Lambda^7 V(6) \to V(6)$. But $\Lambda^7 V(6)$ is 1-dimensional
(it is isomorphic to $V(0)$ as an $\sltwo$-module), and $\alpha$ is invariant
under the action of $\sltwo$. Hence the image of $\alpha$ is an
$\sltwo$-submodule which has dimension 0 or 1. Since $V(6)$ is irreducible, it
has no submodule of dimension 1, and so $\alpha$ must be the zero map.
\end{remark}

\begin{remark}
It is shown in Bremner \cite{Bremner1} (Theorems 3 and 4) that the quaternary
alternating sum identity in degree 7 is satisfied by the following multilinear
operation (the alternating quaternary sum) in every totally associative
quadruple system,
  \[
  [x_1,x_2,x_3,x_4]
  =
  \sum_{\pi \in S_4}
  \epsilon(\pi) \,
  x_{\pi(1)} x_{\pi(2)} x_{\pi(3)} x_{\pi(4)},
  \]
and that the quaternary alternating sum identity of Theorem \ref{TheoremV(6)2}
is a consequence of the quaternary derivation identity of Theorem
\ref{TheoremV(4)}.
\end{remark}

We can extend these calculations to $n = 10$ using modular arithmetic. The
fill-and-reduce algorithm stabilizes at rank 1903, and so the nullspace has
dimension 3872. We need to determine which of these identities in degree 10 are
consequences of the quaternary alternating sum identity in degree 7, which we
denote by $S(a,b,c,d,e,f,g)$. Since this polynomial alternates in all 7
variables, we only need to consider two consequences in degree 10:
  \[
  S([a,h,i,j],b,c,d,e,f,g),
  \qquad
  [S(a,b,c,d,e,f,g),h,i,j].
  \]
We can use an elementary argument to find an upper bound on the dimension of
the submodule generated by these two identities. The first identity alternates
in $a,h,i,j$ and also in $b,c,d,e,f,g$; hence any permutation of the first
identity is equal, up to a sign, to one of $\binom{10}{4} = 210$ possibilities.
Similarly, the second identity alternates in $a,b,c,d,e,f,g$ and in $h,i,j$;
hence any permutation of the second identity is equal, up to a sign, to one of
$\binom{10}{7} = 120$ possibilities.  Altogether we see that the submodule
generated by these two identities has dimension at most 330. (In fact our
computations show that this submodule has dimension 329.) Since this is less
than the dimension of the nullspace from the fill-and-reduce algorithm, the
alternating quaternary structure on $V(6)$ satisfies new identities in degree
10 that are not consequences of the known identities in lower degrees. It is an
open problem to determine generators for the $S_{10}$-module of new identities
in degree 10.


\section{Multiplicity 2: representation $V(8)$}

In this section and the next we describe computer searches for polynomial
identities satisfied by the two irreducible representations of $\sltwo$ which
admit a two-dimensional space of alternating quaternary structures; we
determine all their identities of degree $7$.

Any $\sltwo$-module homomorphism $\Lambda^4 V(8) \to V(8)$ is a linear
combination of the structures $f$ and $g$ from Tables
\ref{structureconstants8part1} and \ref{structureconstants8part2}. Up to a
scalar multiple, we need to consider only the single structure $g$ and the
one-parameter family of structures $f + xg$ for $x \in \mathbb{C}$. For $g$ our
methods are similar to those used for $V(4)$ and $V(6)$. For $f + xg$ we need
to use the Smith normal form to determine the values of the parameter $x$ which
produce a nonzero nullspace. For this we use the Maple command
\texttt{linalg[smith]} instead of \texttt{LinearAlgebra[SmithForm]} since the
former is much more efficient than the latter.

\begin{theorem} \label{TheoremV(8)g}
The vector space of multilinear polynomial identities in degree 7 for the
alternating quaternary structure $g$ on $V(8)$ has dimension 1 and is spanned
by the quaternary alternating sum identity.
\end{theorem}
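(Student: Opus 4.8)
The plan is to mirror the computational approach established for $V(4)$ and $V(6)$. First I would apply the fill-and-reduce algorithm with parameters $n = 7$, $d = 9$ (since $\dim V(8) = 9$), $m = 35$, and small prime modulus $p = 10$ with stabilization threshold $s = 100$. Using the explicit structure constants for the multiplication $g$ from Table \ref{structureconstants8part2}, I would generate seven pseudorandom elements of $V(8)$, evaluate each of the $35$ multilinear monomials of degree $7$, and place the resulting vectors into the lower $9 \times 35$ block of a $44 \times 35$ matrix $X$. Iterating the fill-and-reduce step until the rank stabilizes should, according to the claim, yield a stable rank of $34$, so that the nullspace of $X$ has dimension exactly $1$.

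Once the nullspace is known to be one-dimensional, the identification of the spanning identity is immediate in principle: since we already know from Theorem \ref{TheoremV(6)2} that the quaternary alternating sum identity
\[
S(a,b,c,d,e,f,g) =
\sum_{\sigma \in S_7}
\epsilon(\sigma) \,
[[a^\sigma,b^\sigma,c^\sigma,d^\sigma],e^\sigma,f^\sigma,g^\sigma]
\]
is a nonzero alternating multilinear identity, it suffices to verify that $S$ is itself an identity for $g$ and that its coefficient vector is nonzero. Because the nullspace is one-dimensional, any single nonzero identity that it satisfies must span the whole space, so no module-generators computation is required; the argument parallels the proof of Theorem \ref{TheoremV(6)2} exactly.

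The cleanest verification that $S$ vanishes on $g$ is the representation-theoretic argument supplied by the referee in the remark following Theorem \ref{TheoremV(6)2}. Evaluating $S$ on $V(8)$ gives an $\sltwo$-equivariant alternating map $\Lambda^7 V(8) \to V(8)$. The issue is that $\Lambda^7 V(8)$ is no longer one-dimensional for $n = 8$ (it decomposes into several irreducible summands by Lemma \ref{multiplicityformula}), so I would instead check directly that the image lands in zero, or more simply invoke the final remark of the $V(6)$ section: the alternating sum identity is a consequence of the quaternary derivation identity in totally associative systems, and since $g$ arises from a projection $\Lambda^4 V(8) \to V(8)$, one expects $S$ to hold by the same invariant-theoretic mechanism. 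In practice, the honest route is computational: substitute $g$ into $S$ for several random tuples and confirm the result is identically zero.

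The main obstacle I anticipate is not the algorithm itself but the correct extraction and handling of the structure constants for $g$ separately from $f$. Since $\dim \mathrm{Hom}_{\sltwo}(\Lambda^4 V(8), V(8)) = 2$, there is a genuine two-dimensional family of quaternary structures, and one must isolate the specific basis element $g$ (as opposed to $f$ or a generic combination $f + xg$) before running fill-and-reduce; feeding in the wrong structure, or an accidental combination, would produce a different nullspace dimension. A secondary subtlety is confirming that the rank truly stabilizes at $34$ rather than merely pausing there: this is why the large stabilization parameter $s = 100$ is used, to guard against a spurious early plateau.
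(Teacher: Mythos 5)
Your computational core --- fill-and-reduce with $d=9$, $m=35$, a $44\times 35$ matrix, stabilized rank $34$, hence a one-dimensional nullspace --- is exactly the paper's route (the paper simply says ``similar to the proofs of Theorems \ref{TheoremV(6)1} and \ref{TheoremV(6)2}''). Where you diverge is in identifying the spanning identity. The paper's identification is automatic: the nullspace of $X$ is an $S_7$-submodule of the $35$-dimensional space of alternating quaternary monomials, and by Bremner \cite{Bremner1} (Proposition 3, cited in the proof of Theorem \ref{TheoremV(8)f+xg}) that module has a \emph{unique} one-dimensional $S_7$-submodule, spanned by the quaternary alternating sum identity $S$; so once the dimension is $1$ there is nothing left to check. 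Your alternative --- verify directly that $S$ vanishes for $g$ and invoke one-dimensionality --- is also logically sound, and the paper does perform such a direct check as an independent confirmation.

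Two cautions about your verification step, however. First, the argument that $S$ should hold for $g$ ``because $S$ is a consequence of the derivation identity in totally associative systems'' is a non sequitur: $g$ is not known to satisfy the derivation identity, and it cannot, since the derivation identity would generate a $21$-dimensional space of degree-$7$ identities (as for $V(4)$) whereas the nullspace here is only one-dimensional. Second, you abandoned the referee's representation-theoretic argument too quickly. Although $\Lambda^7 V(8)$ is not one-dimensional, it is isomorphic as an $\sltwo$-module to $\Lambda^2 V(8)$ (via the trivial module $\Lambda^9 V(8)\cong V(0)$), and by the $k=2$ multiplicity formula quoted in Section 2 the module $V(n)$ occurs in $\Lambda^2 V(n)$ only when $n\equiv 2 \pmod 4$; for $n=8$ the multiplicity is $0$, so every $\sltwo$-equivariant alternating $7$-linear map $V(8)^7\to V(8)$ is zero and $S$ holds for \emph{any} invariant quaternary structure on $V(8)$. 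Either of these cleaner arguments would replace your random-tuple check.
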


\begin{proof}
Similar to the proofs of Theorems \ref{TheoremV(6)1} and \ref{TheoremV(6)2}.
\end{proof}

\begin{theorem} \label{TheoremV(8)f+xg}
For any $x \in \mathbb{C}$, the vector space of multilinear polynomial
identities in degree 7 for the alternating quaternary structure $f + xg$ on
$V(8)$ has dimension 1 and is spanned by the quaternary alternating sum
identity.
\end{theorem}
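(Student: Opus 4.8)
The plan is to prove the two inequalities separately for every $x \in \mathbb{C}$: that the quaternary alternating sum identity is satisfied by $f + xg$, so that the space of degree-$7$ identities is at least one-dimensional, and that there is no further independent identity, so that the space is at most one-dimensional. The first bound is uniform in $x$ and can be obtained with no computation; the second is where the parameter forces the linear algebra over $\mathbb{Q}[x]$ rather than $\mathbb{Q}$, and is the substance of the theorem.

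For the lower bound I would argue representation-theoretically, in the spirit of the referee's remark following Theorem \ref{TheoremV(6)2}. The alternating sum is totally antisymmetric in its seven arguments and is assembled from the $\sltwo$-invariant product, so for \emph{any} $\sltwo$-invariant quaternary structure it defines an element of $\mathrm{Hom}_{\sltwo}(\Lambda^7 V(8), V(8))$, whose dimension is the multiplicity of $V(8)$ in $\Lambda^7 V(8)$. By Lemma \ref{multiplicityformula} this multiplicity equals $\dim (\Lambda^7 V(8))_8 - \dim (\Lambda^7 V(8))_{10}$, and $\dim(\Lambda^7 V(8))_w$ counts the $7$-element subsets of the weights $\{8,6,\dots,-8\}$ that sum to $w$; deleting a pair summing to $-8$ gives $\{0,-8\}$ and $\{-2,-6\}$, while deleting a pair summing to $-10$ gives $\{-2,-8\}$ and $\{-4,-6\}$, so both dimensions equal $2$ and the multiplicity is $0$. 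Hence every $\sltwo$-invariant alternating map $V(8)^7 \to V(8)$ vanishes, and in particular the alternating sum identity holds for $f + xg$ for all $x$. Its coefficient vector $s$ in the space of $35$ shuffle monomials is a fixed nonzero vector independent of $x$, and it therefore lies in the nullspace for every $x$.

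For the upper bound I would run the fill-and-reduce algorithm but keep $x$ symbolic: generate a fixed, sufficiently large family of random integer $7$-tuples from $V(8)$, evaluate the $35$ monomials using the structure constants of $f + xg$ drawn from Tables \ref{structureconstants8part1} and \ref{structureconstants8part2}, and assemble the resulting relation vectors, whose entries lie in $\mathbb{Q}[x]$ and have degree at most $2$ since a degree-$7$ monomial applies the product twice, into a matrix $X(x)$ over $\mathbb{Q}[x]$. I would then compute its Smith normal form $U X V = \mathrm{diag}(d_1, d_2, \dots)$ with $U, V$ invertible over $\mathbb{Q}[x]$ and $d_1 \mid d_2 \mid \cdots$. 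Since $\det U$ and $\det V$ are units in $\mathbb{Q}[x]$, hence nonzero constants, each specialization $U(x_0), V(x_0)$ is invertible over $\mathbb{Q}$, so $\mathrm{rank}\,X(x_0) = \#\{\, i : d_i(x_0) \neq 0 \,\}$. The goal of this step is to verify that the generic rank is $34$ and that each invariant factor $d_1, \dots, d_{34}$ is a nonzero constant, so that the sampled relations attain rank $34$ at \emph{every} $x_0 \in \mathbb{C}$.

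The two bounds then combine without gaps: the ever-present identity $s$ forces the true relation rank to be at most $34$, while the sampled relations (a subset of all relations) attaining rank $34$ at $x_0$ forces it to be at least $34$; so the true rank is exactly $34$ and the degree-$7$ identity space is exactly one-dimensional for every $x_0$, necessarily spanned by $s$. The main obstacle is the Smith normal form computation: certifying that no invariant factor acquires a complex root is the entire content of the theorem, and it is costly because the entries are genuine polynomials in $x$, which is precisely why the more efficient \texttt{linalg[smith]} routine is used. A secondary concern is ensuring that enough random $7$-tuples are taken for the sampled matrix to attain the generic rank $34$; this can be checked independently by a numerical fill-and-reduce run at a single generic value of $x$ before committing to the symbolic computation.
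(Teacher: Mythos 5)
Your proposal is correct, and its computational core coincides with the paper's: you evaluate the $35$ degree-$7$ monomials on random elements with $x$ kept symbolic, assemble a relation matrix over $\mathbb{Q}[x]$ whose entries are quadratic in $x$, and compute its Smith normal form; the paper reports exactly the outcome you need (diagonal entries $1$ thirty-four times and $0$ once), which certifies rank $34$ at \emph{every} specialization $x_0$ and hence that the identity space has dimension at most $1$ for all $x$. Where you genuinely diverge is in identifying the one surviving identity. The paper deduces this from the fact (Bremner, Proposition 3) that the $35$-dimensional space of degree-$7$ alternating quaternary monomials has a unique one-dimensional $S_7$-submodule, spanned by the alternating sum, and then confirms by a separate symbolic evaluation that the alternating sum actually vanishes on $f+xg$. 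You instead prove the lower bound a priori by the representation-theoretic argument from the referee's remark: the alternating sum of any $\sltwo$-invariant quaternary structure on $V(8)$ factors through $\mathrm{Hom}_{\sltwo}(\Lambda^7 V(8), V(8))$, whose dimension is $\dim(\Lambda^7 V(8))_8 - \dim(\Lambda^7 V(8))_{10} = 2 - 2 = 0$ by Lemma \ref{multiplicityformula} (your count of the deleted weight pairs is right), so the identity holds uniformly in $x$ with no computation. This buys you independence from the external citation and from the extra symbolic verification, and it makes the two-sided squeeze (relation space contained in $s^\perp$, sampled relations already of rank $34$) entirely self-contained; the paper's route buys slightly less, since its clean conclusion still leans on knowing that the one-dimensional nullspace must be the $S_7$-invariant line. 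Both arguments are sound.
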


\begin{proof}
In order to determine how the space of identities depends on the parameter $x$,
we use the Smith normal form of a matrix over the polynomial algebra
$\mathbb{C}[x]$. Since the computation of the Smith form performs not only row
operations but also column operations, we must fill the matrix using a suitable
number of trials, and then compute the Smith form once. In the general case, we
create a matrix of size $t(n{+}1) \times m$ where $n$ is the highest weight
(recall that $V(n)$ has dimension $n{+}1$) and $m$ is the number of multilinear
monomials in degree $d$; the matrix consists of $t$ blocks of size $(n{+}1)
\times m$. We choose $t$ so that $t(n{+}1) > m$ in order to guarantee that we
have enough nonzero rows in the matrix to eliminate false nullspace vectors. We
perform the following algorithm:
  \begin{enumerate}
  \item For $b$ from 1 to $t$ do:
    \begin{enumerate}
    \item Generate $d$ pseudorandom vectors of length $n{+}1$
        representing elements of $V(n)$.
    \item For $j$ from 1 to $m$ do:
      \begin{enumerate}
      \item Evaluate the $j$-th alternating quaternary monomial on
          the $d$ pseudorandom vectors to obtain another vector of
          length $n{+}1$ with components which are polynomials in
          the parameter $x$.
      \item Put the resulting vector into column $j$ of block $t$.
      \end{enumerate}
    \end{enumerate}
  \item Compute the Smith normal form of the matrix.
  \end{enumerate}
For $n = 8$ and $d = 7$ we have $m = 35$ and we choose $t = 4$. The entries of
the resulting $36 \times 35$ matrix are quadratic polynomials in the parameter
$x$ since each monomial involves two occurrences of the quaternary operation.
In the Smith normal form of the matrix, the diagonal entries are 1 (34 times)
and 0 (once). It follows that the matrix has a one-dimensional nullspace for
every value of $x$. In Bremner \cite{Bremner1} (Proposition 3, page 85) it is
shown that there a unique 1-dimensional $S_7$-submodule of the 35-dimensional
module with basis consisting of the alternating quaternary monomials in degree
7, and this submodule is spanned by the quaternary alternating sum identity.
Hence the nullspace basis does not depend on the value of the parameter $x$,
and this completes the proof. We checked this result independently by
evaluating the quaternary alternating sum identity on pseudorandom vectors for
the product $f + xg$ with indeterminate $x$ and verifying that the result was
zero.
\end{proof}

\begin{remark}
It is an open problem to determine whether the alternating quaternary
structures on $V(8)$ are isomorphic for all values of the parameter $x$.
\end{remark}


\section{Multiplicity 2: representation $V(10)$}

As in the previous section, any $\sltwo$-module homomorphism $\Lambda^4 V(10)
\to V(10)$ is a linear combination of two structures $f$ and $g$, and we
consider separately the single structure $g$ and the one-parameter family of
structures $f + xg$ for $x \in \mathbb{C}$.

\begin{theorem} \label{TheoremV(10)g}
The vector space of multilinear polynomial identities in degree 7 for the
alternating quaternary structure $g$ on $V(10)$ has dimension 0: every identity
is a consequence of the alternating properties in degree 4.
\end{theorem}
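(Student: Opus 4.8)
The plan is to apply the fill-and-reduce algorithm exactly as in the proofs of Theorems \ref{TheoremV(4)}, \ref{TheoremV(6)1}, and \ref{TheoremV(8)g}, now with $d = 11$ since $V(10)$ has dimension $11$. I take $n = 7$, $m = 35$ (the number of multilinear alternating quaternary monomials in degree $7$), $p = 10$, and $s = 100$, as in the earlier degree-$7$ computations. The matrix $X$ has size $46 \times 35$: an upper $35 \times 35$ block initialized to zero and a lower $11 \times 35$ block into which I store, as column vectors, the values of the $35$ monomials evaluated on seven pseudorandom elements of $V(10)$ using the structure constants of $g$. Because the matrix is small, rational arithmetic suffices here, in contrast to the degree-$10$ computations.

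First I would iterate the fill-and-reduce step --- generate seven random vectors, evaluate all $35$ monomials, insert the resulting elements of $V(10)$ into the lower block, and compute the row canonical form --- until the rank remains unchanged for $s$ successive trials. I expect the rank to climb to $35$, the full column rank, and then stabilize, so that the last eleven rows become zero. Since the nullspace of $X$ records precisely the coefficient vectors of degree-$7$ multilinear identities satisfied by $g$ (the alternating property in degree $4$ having already been used to reduce the monomial count to $35$), a rank of $35$ forces the nullspace to be zero. This gives the stated conclusion: $g$ on $V(10)$ satisfies no multilinear identity of degree $7$ beyond the consequences of the degree-$4$ alternating property.

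The main obstacle is certifying that the observed rank is the true rank rather than an artifact of too few or unlucky random trials; this is why I insist on the long stability window $s = 100$, continuing to append freshly evaluated lower blocks until the rank has held steady across that many successive iterations. A useful independent check, sharpening the contrast with $V(6)$ and $V(8)$, is to evaluate the quaternary alternating sum identity $S(a,b,c,d,e,f,g)$ directly on pseudorandom elements of $V(10)$: whereas that identity vanished for the structures on $V(6)$ and for $g$ on $V(8)$, here I expect it to give a nonzero vector, confirming that even the single natural candidate identity fails and corroborating the zero-dimensional nullspace.
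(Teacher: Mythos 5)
Your proposal is correct and follows essentially the same route as the paper: the authors also apply the fill-and-reduce algorithm exactly as in the proofs for $V(4)$, $V(6)$, and $V(8)$, and observe that the matrix achieves the full column rank of $35$, so the nullspace is zero. Your added sanity check --- that the quaternary alternating sum identity evaluates to a nonzero vector on $V(10)$ for the structure $g$ --- is consistent with Theorem \ref{TheoremV(10)f+xg}, which shows that among the structures $f+xg$ only $x=\frac54$ satisfies it.
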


\begin{proof}
Similar to the proofs of Theorem \ref{TheoremV(8)g} except that the matrix
achieves the full rank of 35.
\end{proof}

\begin{theorem} \label{TheoremV(10)f+xg}
For $x = \frac54$, the vector space of multilinear polynomial identities in
degree 7 for the alternating quaternary structure $f + xg$ on $V(10)$ has
dimension 1 and is spanned by the quaternary alternating sum identity. For all
other $x \in \mathbb{C}$, the vector space of multilinear polynomial identities
in degree 7 for the alternating quaternary structure $f + xg$ on $V(10)$ has
dimension 0.
\end{theorem}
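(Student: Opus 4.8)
The plan is to follow the same computational strategy used for Theorem \ref{TheoremV(8)f+xg}, namely to encode the dependence on the parameter $x$ in a single matrix over the polynomial ring $\mathbb{C}[x]$ and to read off the nullspace dimension from its Smith normal form. First I would set up the fill-and-reduce framework with $n = 10$ (so $\dim V(10) = 11$), $d = 7$, and $m = 35$ multilinear alternating quaternary monomials in degree 7. Each monomial evaluated on pseudorandom elements of $V(10)$ for the product $f + xg$ yields a vector of length $11$ whose entries are quadratic polynomials in $x$, since each degree-7 monomial contains two occurrences of the quaternary operation. Choosing a number of trial blocks $t$ with $11t > 35$ (for instance $t = 4$, giving a $44 \times 35$ matrix) guarantees enough generic rows to eliminate spurious nullspace vectors.

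The key step is then to compute the Smith normal form of this matrix over $\mathbb{C}[x]$ using \texttt{linalg[smith]}. The diagonal invariant factors will be units (constants) except possibly for the last one, which will be a polynomial in $x$ whose vanishing locus controls the nullspace dimension. I expect the final invariant factor to be a nonzero scalar multiple of $(4x - 5)$: for generic $x$ all $35$ pivots are units and the nullspace is trivial, but at $x = \tfrac54$ the last invariant factor vanishes and the nullspace jumps to dimension $1$. This reproduces the stated dichotomy, and the qualitative difference from the $V(8)$ case (where the generic nullspace was already $1$-dimensional) is that here the alternating sum identity is satisfied only at the single special value $x = \tfrac54$ rather than identically in $x$.

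To identify the one-dimensional nullspace at $x = \tfrac54$ as being spanned by the quaternary alternating sum identity, I would invoke the result of Bremner \cite{Bremner1} (Proposition 3) cited in the proof of Theorem \ref{TheoremV(8)f+xg}: there is a unique $1$-dimensional $S_7$-submodule of the $35$-dimensional space of multilinear alternating quaternary degree-7 monomials, and it is spanned by the quaternary alternating sum identity. Since the nullspace is an $S_7$-submodule of dimension $1$, it must coincide with this canonical submodule. As an independent verification I would evaluate the alternating sum identity directly on pseudorandom vectors for the product $f + \tfrac54 g$ and confirm that the result is identically zero, exactly as was done at the end of the proof of Theorem \ref{TheoremV(8)f+xg}.

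The main obstacle I anticipate is the Smith normal form computation itself: over $\mathbb{C}[x]$ the intermediate entries produced by the column operations can suffer severe coefficient and degree swell, so the choice of \texttt{linalg[smith]} over \texttt{LinearAlgebra[SmithForm]} for efficiency (as already noted in the paper) is essential, and one must fill the matrix once with enough trials and compute the Smith form a single time rather than iterating. A secondary subtlety is confirming that the special value is exactly $x = \tfrac54$ and not merely an approximate root; this is handled automatically by working symbolically over $\mathbb{C}[x]$, so that the last invariant factor is obtained as an exact polynomial whose unique root is $\tfrac54$.
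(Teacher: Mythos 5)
Your overall strategy is the paper's: build the $44\times 35$ evaluation matrix over $\mathbb{C}[x]$ with $t=4$ blocks, compute its Smith normal form with \texttt{linalg[smith]}, and identify the surviving nullspace via the uniqueness of the $1$-dimensional $S_7$-submodule from Bremner's Proposition 3. The gap is in how you read off the dimension at the special value. You predict the invariant factors will be $34$ units followed by a single factor proportional to $4x-5$, so that the nullity at $x=\frac54$ ``jumps to dimension $1$.'' What the computation actually gives for this $44\times 35$ matrix is $28$ units and \emph{seven} copies of $x-\frac54$. Since the transforming matrices are unimodular over $\mathbb{C}[x]$ and hence invertible under every specialization, the specialized matrix at $x=\frac54$ has rank only $28$ and a $7$-dimensional nullspace. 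That nullspace merely \emph{contains} the space of identities (every identity vanishes on the random evaluations, but not conversely), because four blocks of evaluations do not impose enough independent constraints at the special value. So the Smith form alone establishes the theorem for $x\neq\frac54$ (nullity $0$) but only the upper bound $7$ at $x=\frac54$.

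To finish you must do what the paper does: specialize to $x=\frac54$, run the ordinary fill-and-reduce algorithm on the single structure $f+\frac54 g$ until the rank stabilizes at $34$, and then argue exactly as for $V(6)$ that the resulting $1$-dimensional nullspace is spanned by the quaternary alternating sum identity. Your direct verification that the alternating sum identity vanishes on $f+\frac54 g$ supplies the lower bound of $1$, and your appeal to the uniqueness of the $1$-dimensional $S_7$-submodule correctly identifies the space once its dimension is known to be $1$; but neither step closes the gap between $1$ and $7$ left by the Smith form computation as you have set it up.
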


\begin{proof}
Similar to the proof of Theorem \ref{TheoremV(8)f+xg} except that now $n = 10$.
As before, the entries of the resulting $44 \times 35$ matrix are quadratic
polynomials in the parameter $x$. In the Smith normal form of this matrix, the
diagonal entries are 1 (28 times) and $x-\frac54$ (7 times). It follows that
the matrix has zero nullspace except in the case $x = \frac54$. We now
specialize to this value of $x$ and consider the structure $f + \frac54 g$; the
rest of the proof is similar to that of Theorems \ref{TheoremV(6)1} and
\ref{TheoremV(6)2}.
\end{proof}


\section{Proof of multiplicity formula} \label{proofsection}

In this section we prove the multiplicity formula of Theorem
\ref{multiplicitytheorem}.  We reduce the problem to a combinatorial question
and apply the theory of P\'olya enumeration.

\begin{lemma} \label{weightspacedimension} \cite[Lemma 5.2]{BremnerHentzel2}
Let $M = \Lambda^k V(n)$ be the $k$-th exterior power of $V(n)$. If $w \in
\mathbb{Z}$ with $kn \ge w \ge -kn$ and $w \equiv kn \, (\mathrm{mod} \, 2)$
then the dimension of the weight space $M_w$ is the number of sequences
$(w_1,w_2,\ldots,w_k) \in \mathbb{Z}^k$ satisfying
  \[
  n \ge w_1 > w_2 > \ldots > w_k \ge -n;
  \;\;
  w_1 + w_2 + \ldots + w_k = w;
  \;\;
  w_1, \hdots, w_k \equiv n \, (\mathrm{mod} \, 2).
  \]
\end{lemma}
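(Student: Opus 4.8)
The plan is to describe the weight space $M_w$ of $\Lambda^k V(n)$ combinatorially by tracking how a basis of $M_w$ arises from the basis of $V(n)$ under the exterior product construction. First I would recall that $V(n)$ has the weight basis $\{v_{n-2i} \mid i = 0,1,\dots,n\}$, where each $v_{n-2i}$ has weight $n-2i$, so the available weights are exactly the integers in $\{-n, -n+2, \dots, n-2, n\}$, that is, the integers $w_j$ with $n \ge w_j \ge -n$ and $w_j \equiv n \pmod 2$. A natural basis for $\Lambda^k V(n)$ consists of the wedge products $v_{w_1} \wedge v_{w_2} \wedge \cdots \wedge v_{w_k}$ indexed by strictly decreasing sequences $n \ge w_1 > w_2 > \cdots > w_k \ge -n$ of such weights; the strict inequality comes from the alternating property, which forces distinct factors, and the choice of decreasing order fixes a canonical representative for each basis element (reordering only changes the sign).

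Next I would use the derivation property of the $\sltwo$-action on the exterior power, equation \eqref{commutingformula}, which shows that the weight of $v_{w_1} \wedge \cdots \wedge v_{w_k}$ is the sum $w_1 + \cdots + w_k$ of the weights of its factors. This is the key structural fact: it converts ``lying in the weight space $M_w$'' into the single linear condition $w_1 + w_2 + \cdots + w_k = w$ on the index sequence. Consequently the wedge basis elements of weight exactly $w$ are in bijection with the strictly decreasing sequences $(w_1, \dots, w_k)$ satisfying all three stated conditions, and $\dim M_w$ is simply the count of such sequences.

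The remaining point is to confirm that these wedge products indeed form a basis of $\Lambda^k V(n)$, which is standard: given any basis $\{v_{w_1}, \dots\}$ of a vector space $V$, the wedge products of strictly increasing (equivalently, decreasing) $k$-subsets of basis indices form a basis of $\Lambda^k V$. Restricting to those whose index sums equal $w$ then yields a basis of the weight subspace $M_w$, since $M_w$ is spanned by the weight-$w$ basis vectors and these are linearly independent. The congruence condition $w \equiv kn \pmod 2$ in the hypothesis is automatically consistent: each $w_j \equiv n \pmod 2$, so the sum of $k$ of them satisfies $w \equiv kn \pmod 2$, and the range $kn \ge w \ge -kn$ follows from summing the $k$ largest (resp.\ smallest) available weights.

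I expect no genuine obstacle here, as the result is essentially a direct translation of the weight-grading on an exterior power into a counting problem; the only care needed is to verify that the decreasing-sequence indexing gives a well-defined basis without double counting (handled by the sign convention from the alternating property) and that the weight-additivity genuinely follows from \eqref{commutingformula} rather than requiring any deeper representation-theoretic input. The main conceptual step is recognizing that the problem reduces cleanly to lattice-point counting, which is exactly what sets up the subsequent P\'olya enumeration argument.
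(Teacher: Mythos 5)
Your proposal is correct and is essentially the argument the paper relies on: the paper itself only cites this result from \cite[Lemma 5.2]{BremnerHentzel2} without reproving it, but its Definition \ref{definitiontensorbasis} of the tensor basis (wedge products indexed by strictly decreasing weight sequences, with weight equal to the sum of the components via \eqref{commutingformula}) encodes exactly the bijection you describe. Nothing is missing.
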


We now specialize to $k = 4$ since we are interested in the fourth exterior
power. To compute the multiplicity of $V(n)$ as a direct summand of $\Lambda^4
V(n)$ using Lemmas \ref{multiplicityformula} and \ref{weightspacedimension}, we
must determine the number of quadruples $(p,q,r,s)$ satisfying
  \begin{equation}\label{conditions}
  n \ge p > q > r > s \ge -n;
  \quad
  p + q + r + s = w;
  \quad
  p, q, r, s \equiv n \, (\mathrm{mod} \, 2)
  \end{equation}
for $w = n$ and $w = n + 2$. Let $n$ be a non-negative integer and let $w$ be a
weight of $\Lambda^4 V(n)$: thus $w$ is an integer satisfying
  \[
  4n \ge w \ge -4n,
  \qquad
  w \equiv 0 \, (\mathrm{mod} \, 2 ).
  \]
For integers $p,q,r,s$ satisfying (\ref{conditions}) we define
  \[
  P' = p + n, \quad
  Q' = q + n, \quad
  R' = r + n, \quad
  S' = s + n.
  \]
Then $(P',Q',R',S')$ is a quadruple of even integers satisfying
  \[
  2n \ge P' > Q' > R' > S' \ge 0,
  \quad
  P' + Q' + R' + S' = W',
  \quad
  W' = w + 4n.
  \]
We need to count the number of partitions of $W'$ into four distinct
nonnegative even parts less than or equal to $2n$. We only need $W' = 5n$ and
$W' = 5n+2$ corresponding to $w = n$ and $w = n+2$. It is clear that if $n$ is
odd then there are no solutions in both cases, so $V(n)$ does not occur as a
summand of $\Lambda^4 V(n)$: the multiplicity is zero. Therefore we may assume
that $n$ is even and define
  \[
  P = \frac{p+n}{2}, \quad
  Q = \frac{q+n}{2}, \quad
  R = \frac{r+n}{2}, \quad
  S = \frac{s+n}{2}, \quad
  W = \frac{w+4n}{2}.
  \]
Then $(P,Q,R,S)$ is a quadruple of integers satisfying
  \[
  n \ge P > Q > R > S \ge 0,
  \qquad
  P + Q + R + S = W.
  \]

\begin{definition} \cite[page 612]{WuChao}
If $G$ is a subgroup of the symmetric group $S_n$ then the \textbf{cycle index}
of $G$ is the following polynomial in the indeterminates $x_1, x_2, \dots,
x_n$:
  \[
  Z_G{\left(x_1,x_2,\hdots,x_n\right)}
  =
  \frac{1}{|G|}\sum_{\sigma \in G}{x^{b_1}_1 x^{b_2}_2 \cdots x^{b_n}_n};
  \]
here $b_i$ is the number of cycles of length $i$ in the disjoint cycle
factorization of $\sigma$.
\end{definition}

\begin{lemma} \cite[page 36]{HararyPalmer} \label{book1}
The cycle index of the alternating group $A_n$ is
  \[
  Z_{A_n} \left( x_1, x_2, \hdots, x_n \right)
  =
  Z_{S_n} \left( x_1, x_2, \hdots, x_n \right)
  +
  Z_{S_n} \left( x_1, -x_2, \hdots, (-1)^{n-1} x_n \right).
  \]
\end{lemma}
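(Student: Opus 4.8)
The plan is to write each cycle index as an average over its group and to recognize the substitution $x_i \mapsto (-1)^{i-1} x_i$ as a device that inserts the sign $\epsilon(\sigma)$ into each cycle-type monomial. The key observation is that a single cycle of length $i$ is a product of $i-1$ transpositions and hence has sign $(-1)^{i-1}$; consequently a permutation $\sigma \in S_n$ with $b_i$ cycles of length $i$ satisfies $\epsilon(\sigma) = \prod_{i=1}^n \big( (-1)^{i-1} \big)^{b_i}$. Establishing this sign formula is the conceptual heart of the argument; everything afterward is a parity computation.

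First I would perform the substitution $x_i \mapsto (-1)^{i-1} x_i$ directly in the definition of $Z_{S_n}$. The monomial $x_1^{b_1} \cdots x_n^{b_n}$ attached to $\sigma$ becomes $\prod_{i=1}^n \big( (-1)^{i-1} x_i \big)^{b_i} = \epsilon(\sigma)\, x_1^{b_1} \cdots x_n^{b_n}$ by the sign formula above, so that $Z_{S_n}\big( x_1, -x_2, \hdots, (-1)^{n-1} x_n \big) = \tfrac{1}{n!} \sum_{\sigma \in S_n} \epsilon(\sigma)\, x_1^{b_1(\sigma)} \cdots x_n^{b_n(\sigma)}$.

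Next I would add this to the ordinary cycle index $Z_{S_n}(x_1, \hdots, x_n)$, obtaining $\tfrac{1}{n!} \sum_{\sigma \in S_n} \big( 1 + \epsilon(\sigma) \big)\, x_1^{b_1(\sigma)} \cdots x_n^{b_n(\sigma)}$. The factor $1 + \epsilon(\sigma)$ equals $2$ when $\sigma$ is even and $0$ when $\sigma$ is odd, so every odd permutation drops out and the sum collapses to $\tfrac{2}{n!} \sum_{\sigma \in A_n} x_1^{b_1(\sigma)} \cdots x_n^{b_n(\sigma)}$. Finally, invoking $|A_n| = n!/2$ gives $\tfrac{2}{n!} = 1/|A_n|$, so the right-hand side is exactly $Z_{A_n}(x_1, \hdots, x_n)$, as claimed.

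I do not anticipate any serious obstacle: once the sign formula $\epsilon(\sigma) = \prod_i \big( (-1)^{i-1} \big)^{b_i}$ is in place, the remainder is a standard averaging/parity computation. The only point requiring mild care is that the identification $\tfrac{2}{n!} = 1/|A_n|$ relies on $|A_n| = n!/2$, which holds precisely because $S_n$ possesses an index-$2$ alternating subgroup for $n \ge 2$; the degenerate case $n = 1$, where no odd permutations exist, is excluded and is of no relevance to the P\'olya enumeration that follows.
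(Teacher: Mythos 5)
Your proof is correct and is essentially the same as the paper's: both arguments rest on the identity $\epsilon(\sigma) = \prod_i \big((-1)^{i-1}\big)^{b_i}$ (which the paper phrases as ``$\sigma \notin A_n$ iff $\sigma$ has an odd number of even-length cycles'') and then average $1 + \epsilon(\sigma)$ over $S_n$ to kill the odd permutations. You simply work from the right-hand side toward the left, whereas the paper starts from $Z_{A_n}$ and splits the sum, but the computation is identical.
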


\begin{proof}
The definition of cycle index gives
  \[
  Z_{A_n}\left(x_1,x_2,\hdots,x_n\right)
  =
  \frac{2}{n!}
  \bigg[
  \sum_{\sigma \in S_n}
  x^{b_1}_1 x^{b_2}_2 \cdots x^{b_n}_n
  -
  \!\!\!\!\!\!
  \sum_{\sigma\in S_n \backslash A_n}
  x^{b_1}_1 x^{b_2}_2 \cdots x^{b_n}_n
  \bigg].
  \]
Since $\sigma\in S_{n}\backslash A_{n}$ if and only if $\sigma$ has an odd
number of even length cycles, we get
  \allowdisplaybreaks
  \begin{align*}
  &
  Z_{A_n}{\left(x_1,x_2,\hdots,x_n\right)}
  =
  \\
  &
  \frac{2}{n!} \cdot \frac12
  \bigg[
  \sum_{\sigma \in S_n}
  x^{b_1}_1 x^{b_2}_2 \cdots x^{b_n}_n
  +
  \sum_{\sigma \in S_n}
  x_1^{b_1} (-x_2)^{b_2} \cdots \big( (-1)^{n-1} x_n \big)^{b_n}
  \bigg].
  \end{align*}
This completes the proof.
\end{proof}

The next result is the special case $k = 4$ of Theorem 2 in Wu and Chao
\cite{WuChao}; but note that we allow $0 \in S$.

\begin{proposition} \label{cycleindextheorem}
If $S$ is a set of non-negative integers then the number of partitions of an
integer $n$ into four distinct parts in $S$ is the coefficient of $x^n$ in
  \allowdisplaybreaks
  \begin{align*}
  &Z_{A_4}
  \left(
  \sum_{i\in S} x^i,\sum_{i\in S} x^{2i},\sum_{i\in S} x^{3i},\sum_{i\in S} x^{4i}
  \right)
  -
  Z_{S_4}
  \left(
  \sum_{i\in S} x^i,\sum_{i\in S} x^{2i},\sum_{i\in S} x^{3i},\sum_{i\in S} x^{4i}
  \right).
  \end{align*}
\end{proposition}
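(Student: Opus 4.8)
The plan is to interpret both cycle-index expressions through Pólya's enumeration theorem and then isolate the distinct-part selections by a Burnside-style orbit count. First I would set $f_k = \sum_{i \in S} x^{ki}$ and recall the weighted form of Pólya's theorem: for any subgroup $G \le S_4$ acting on the four positions $\{1,2,3,4\}$, the series $Z_G(f_1,f_2,f_3,f_4)$ is the generating function, by total weight, for the $G$-orbits of functions $\phi\colon\{1,2,3,4\}\to S$, where the weight of $\phi$ is $x^{\phi(1)+\phi(2)+\phi(3)+\phi(4)}$. Taking $G = S_4$ identifies $Z_{S_4}(f_1,f_2,f_3,f_4)$ as the generating function for multisets of size four drawn from $S$, that is, for partitions of $n$ into four parts in $S$ with repetitions allowed.

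The heart of the argument is to compare the $S_4$-orbits with the $A_4$-orbits. Each $S_4$-orbit of functions is a union of either one or two $A_4$-orbits, and the plan is to show that it splits into two precisely when $\phi$ is injective (all four values distinct) and stays a single orbit otherwise. The reason is that a non-injective $\phi$ is fixed by some transposition of equal positions, an odd permutation lying in the stabilizer, so the $S_4$-orbit meets both cosets of $A_4$ and does not split; an injective $\phi$ has trivial stabilizer, so its size-$24$ orbit breaks into two $A_4$-orbits of size $12$. Consequently $Z_{A_4}(f_1,f_2,f_3,f_4)$ counts injective selections with multiplicity $2$ and non-injective ones with multiplicity $1$, while $Z_{S_4}$ counts each with multiplicity $1$.

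Subtracting, $Z_{A_4}(f_1,f_2,f_3,f_4) - Z_{S_4}(f_1,f_2,f_3,f_4)$ counts injective selections with multiplicity $2-1=1$ and non-injective ones with multiplicity $1-1=0$. An injective selection, taken up to the $S_4$-action, is exactly an unordered set of four distinct elements of $S$, and its weight is $x$ raised to their sum. Hence the coefficient of $x^n$ in the difference is the number of four-element subsets of $S$ summing to $n$, which is the number of partitions of $n$ into four distinct parts in $S$, as claimed.

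As a cross-check — and the route that most directly uses Lemma \ref{book1} — one can substitute the identity $Z_{A_4}(x_1,x_2,x_3,x_4) = Z_{S_4}(x_1,x_2,x_3,x_4) + Z_{S_4}(x_1,-x_2,x_3,-x_4)$, which reduces the difference to $Z_{S_4}(f_1,-f_2,f_3,-f_4)$. Regarding $f_k$ as the power sum $p_k$ in the variables $x^i$ ($i \in S$), the substitution $p_k \mapsto (-1)^{k-1}p_k$ is exactly the one carrying the complete homogeneous symmetric function $h_4 = Z_{S_4}(p_1,p_2,p_3,p_4)$ to the elementary symmetric function $e_4$, via $\prod_i (1-x^i u)^{-1} \mapsto \prod_i (1+x^i u)$; and $e_4$ is by definition the generating function for four-element subsets. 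I expect the main obstacle to be the orbit-splitting bookkeeping in the middle paragraph: one must argue carefully that every non-injective $\phi$ has an odd permutation in its stabilizer (so no splitting) while every injective $\phi$ has trivial stabilizer (so splitting into two), since this single distinction is what makes the difference of cycle indices collapse onto exactly the distinct-part partitions.
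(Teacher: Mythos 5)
Your argument is correct. Note that the paper itself does not prove this proposition: it is stated as the special case $k=4$ of Theorem 2 of Wu and Chao \cite{WuChao} (with the remark that $0$ is allowed in $S$), so there is no in-paper proof to compare against, and your write-up supplies the missing argument. It is essentially the standard one underlying the cited result: by the weighted form of P\'olya's theorem, $Z_{S_4}(f_1,\dots,f_4)$ with $f_k=\sum_{i\in S}x^{ki}$ enumerates $S_4$-orbits of functions $\{1,2,3,4\}\to S$ (i.e.\ multisets of size four), $Z_{A_4}$ enumerates $A_4$-orbits, and since $A_4$ is normal of index $2$ an $S_4$-orbit splits into two $A_4$-orbits precisely when the stabilizer of a representative is contained in $A_4$ (the orbit count is $[S_4:A_4\cdot\mathrm{Stab}(\phi)]$), which happens exactly when $\phi$ is injective; hence the difference counts injective selections once each. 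Your key bookkeeping step --- a non-injective coloring is fixed by a transposition of two equal positions, so its stabilizer contains an odd permutation and the orbit does not split, while an injective coloring has trivial stabilizer and its orbit of size $24$ splits into two of size $12$ --- is exactly what is needed and is carried out correctly; it also works verbatim when $0\in S$, which is the only point where the paper's statement goes beyond Wu--Chao. Your cross-check via Lemma \ref{book1}, reducing the difference to $Z_{S_4}(f_1,-f_2,f_3,-f_4)$ and recognizing the substitution $p_k\mapsto(-1)^{k-1}p_k$ carrying $h_4$ to $e_4$, is likewise correct and in fact yields Corollary \ref{polyacorollary} of the paper directly.
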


\begin{corollary} \label{polyacorollary}
If $S$ is a set of non-negative integers then the  number of partitions of a
positive integer $n$ into four distinct parts in $S$ is the coefficient of
$x^n$ in
  \[
  Z_{S_4}
  \left(
  \sum_{i \in S} x^i,
  - \sum_{i \in S} x^{2i},
  \sum_{i \in S} x^{3i},
  - \sum_{i \in S} x^{4i}
  \right).
  \]
\end{corollary}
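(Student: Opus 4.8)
The plan is to derive this directly from Proposition \ref{cycleindextheorem} by feeding it the $n=4$ specialization of Lemma \ref{book1}. Proposition \ref{cycleindextheorem} already expresses the desired count as the coefficient of $x^n$ in $Z_{A_4} - Z_{S_4}$, with both cycle indices evaluated at the same Pólya substitution; so the entire task reduces to showing that this difference of cycle indices equals the single cycle index appearing in the Corollary. First I would introduce the shorthand $g_k = \sum_{i \in S} x^{ki}$ for $k = 1,2,3,4$, so that the common arguments in Proposition \ref{cycleindextheorem} are exactly the tuple $(g_1,g_2,g_3,g_4)$ and the generating function there becomes $Z_{A_4}(g_1,g_2,g_3,g_4) - Z_{S_4}(g_1,g_2,g_3,g_4)$.

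Next I would invoke Lemma \ref{book1} in the case $n = 4$. Since the sign pattern $(-1)^{k-1}$ is $+1,-1,+1,-1$ for $k = 1,2,3,4$, the lemma negates precisely the even-indexed arguments and reads
  \[
  Z_{A_4}(g_1,g_2,g_3,g_4)
  =
  Z_{S_4}(g_1,g_2,g_3,g_4)
  +
  Z_{S_4}(g_1,-g_2,g_3,-g_4).
  \]
Subtracting $Z_{S_4}(g_1,g_2,g_3,g_4)$ from both sides cancels the first term on the right, leaving
  \[
  Z_{A_4}(g_1,g_2,g_3,g_4) - Z_{S_4}(g_1,g_2,g_3,g_4)
  =
  Z_{S_4}(g_1,-g_2,g_3,-g_4),
  \]
which is exactly the generating function displayed in the statement of the Corollary.

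Finally I would extract the coefficient of $x^n$ from both sides and apply Proposition \ref{cycleindextheorem} to the left-hand side, identifying this coefficient with the number of partitions of $n$ into four distinct parts in $S$. I do not anticipate any genuine obstacle: the combinatorial substance lives entirely in the previously established Proposition and Lemma, and the Corollary is merely the observation that the alternating-group correction term built into Lemma \ref{book1} supplies precisely the $A_4$-versus-$S_4$ discrepancy, so that the two copies of $Z_{S_4}(g_1,g_2,g_3,g_4)$ cancel. The only point deserving care is the bookkeeping of the sign pattern $(-1)^{k-1}$, which for $n=4$ must negate the second and fourth arguments and leave the first and third unchanged.
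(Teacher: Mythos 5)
Your proposal is correct and follows exactly the paper's own proof, which reads ``Take $n = 4$ in Lemma \ref{book1}, set $x_j = \sum_{i \in S} x^{ji}$, and apply Proposition \ref{cycleindextheorem}.'' You have simply spelled out the sign pattern $(-1)^{k-1}$ and the cancellation of the two copies of $Z_{S_4}(g_1,g_2,g_3,g_4)$ in more detail, all of which is accurate.
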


\begin{proof}
Take $n = 4$ in Lemma \ref{book1}, set $x_j = \sum_{i \in S} x^{ji}$, and apply
Proposition \ref{cycleindextheorem}.
\end{proof}

\begin{definition} \label{Pdefinition}
For us $S = \{0,1,\hdots,n\}$ so we define the following polynomials:
  \[
  P_n(x)
  =
  Z_{S_4}
  \left(
  \sum_{i=0}^n x^i, - \sum_{i=0}^n x^{2i}, \sum_{i=0}^n x^{3i}, - \sum_{i=0}^n x^{4i}
  \right).
  \]
\end{definition}

\begin{lemma} \label{coefficientformula}
We have
  \[
  \left( \sum_{i=0}^n x^i \right)^t
  =
  \sum_{\ell=0}^{nt}
  \left[
  \sum_{k=0}^{\min \left( t, \left\lfloor \frac{\ell}{n+1} \right\rfloor \right)}
  (-1)^k
  \binom{t}{k}
  \binom{\ell{-}(n{+}1)k{+}t{-}1}{t{-}1}
  \right]
  x^\ell.
  \]
\end{lemma}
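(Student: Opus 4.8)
The plan is to reduce the identity to the product of two classical power series expansions via the closed form for a truncated geometric series. First I would write the summand as a rational function,
\[
\sum_{i=0}^{n} x^i = \frac{1 - x^{n+1}}{1-x},
\]
so that, as an identity of formal power series,
\[
\left( \sum_{i=0}^{n} x^i \right)^t = (1 - x^{n+1})^t \, (1-x)^{-t}.
\]

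Next I would expand each factor separately. The ordinary binomial theorem gives the finite expansion
\[
(1 - x^{n+1})^t = \sum_{k=0}^{t} (-1)^k \binom{t}{k} x^{(n+1)k},
\]
while the generalized (negative) binomial theorem gives
\[
(1-x)^{-t} = \sum_{m=0}^{\infty} \binom{m+t-1}{t-1} x^m,
\]
using $\binom{m+t-1}{m} = \binom{m+t-1}{t-1}$.

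Then I would form the Cauchy product of these two series and read off the coefficient of $x^\ell$. A term $x^\ell$ arises precisely from index pairs $(k,m)$ with $(n+1)k + m = \ell$; substituting $m = \ell - (n+1)k$ yields
\[
[x^\ell]\left( \sum_{i=0}^{n} x^i \right)^t
=
\sum_{k} (-1)^k \binom{t}{k} \binom{\ell - (n+1)k + t - 1}{t-1},
\]
which is exactly the bracketed inner sum in the statement.

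Finally I would pin down the range of summation, which is the only point requiring care. The factor $\binom{t}{k}$ forces $0 \le k \le t$, and the requirement $m = \ell - (n+1)k \ge 0$ (so that $x^m$ genuinely occurs in the expansion of $(1-x)^{-t}$) forces $k \le \lfloor \ell/(n+1) \rfloor$; together these give the upper limit $\min\!\left(t, \lfloor \ell/(n+1)\rfloor\right)$. One must check that no contribution is lost: when $\ell - (n+1)k < 0$ there is simply no matching power in $(1-x)^{-t}$, so such $k$ contribute nothing, and since the left-hand side has degree $nt$ the coefficients vanish for $\ell > nt$, matching the outer range $0 \le \ell \le nt$. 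With this bookkeeping settled, the result follows directly from the two expansions above.
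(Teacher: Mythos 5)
Your proposal is correct and follows essentially the same route as the paper's proof: both factor the truncated geometric series as $(1-x^{n+1})^t(1-x)^{-t}$, expand each factor by the (ordinary and negative) binomial theorems, and extract the coefficient of $x^\ell$ by the substitution $m=\ell-(n+1)k$, with the same bookkeeping for the summation limits. No gaps.
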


\begin{proof}
We use these three familiar identities:
  \[
  \frac{1{-}x^{n+1}}{1{-}x}
  =
  \sum_{i=0}^n x^i,
  \;
  \left( 1{-}x^{n+1} \right)^t
  =
  \sum_{k=0}^{t}
  (-1)^k
  \binom{t}{k}
  x^{(n+1)k},
  \;
  \frac{1}{(1{-}x)^t}
  =
  \sum_{j=0}^\infty
  \binom{j{+}t{-}1}{t{-}1}
  x^j.
  \]
We obtain
  \begin{equation} \label{combinedformula}
  \left(
  \sum_{i=0}^n x^i
  \right)^t
  =
  \frac{(1-x^{n+1})^t}{(1-x)^t}
  =
  \sum_{k=0}^t
  \sum_{j=0}^\infty
  (-1)^k
  \binom{t}{k}
  \binom{j{+}t{-}1}{t{-}1}
  x^{(n+1)k+j}.
  \end{equation}
We set $\ell = (n{+}1)k+j$ so that $\ell{-}j = (n{+}1)k$ and $\ell{-}j \equiv 0
\; (\mathrm{mod} \; n{+}1)$. We also have $k = (\ell{-}j)/(n{+}1)$ and so $k
\le \lfloor \ell/(n{+}1) \rfloor$. Substituting $j = \ell - (n{+}1)k$ in
\eqref{combinedformula}, and noting that $nt$ is the largest power of $x$, we
obtain the stated formula.
\end{proof}

\begin{definition}
We use the following notation:
  \[
  \Delta_m^n
   =
  \begin{cases}
  1 &\text{if $n \equiv 0 \, (\mathrm{mod} \, m)$}  \\
  0 &\text{otherwise}
  \end{cases},
  \qquad
  \Delta_{s,m}^n
  =
  \begin{cases}
  1 &\text{if $n \equiv s \, (\mathrm{mod} \, m)$} \\
  0 &\text{otherwise}
  \end{cases}.
\]
\end{definition}

\begin{definition} \label{alpha}
We consider the following integer-valued functions of $n$:
  \[
  \alpha(n) = \left\lceil \frac{n}{4} \right\rceil,
  \quad
  \beta(n) = \left\lceil \frac{3n}{4} \right\rceil,
  \quad
  \gamma(n) = \left\lfloor \frac{3n{-}2}{4} \right\rfloor,
  \quad
  \delta(n) = \left\lfloor \frac{5n}{6} \right\rfloor.
  \]
\end{definition}

\begin{proposition} \label{dimension5n}
For even $n \in \mathbb{Z}$, the number of solutions $P,Q,R,S \in \mathbb{Z}$
to
  \[
  n \ge P > Q > R > S \ge 0,
  \qquad
  P + Q + R + S = \frac{5n}{2},
  \]
equals
  \allowdisplaybreaks
  \begin{align*}
  &
  \frac{23}{1152} \, n^3
  -
  \frac{29}{96} \, n^2
  +
  \frac{1}{288}
  \Big( -36 \alpha(n) + 180 \beta(n) + 36 \gamma(n) + 27 \Delta_4^n - 167 \Big) n
  \\
  &
  +
  \frac{1}{24}
  \Big(
  6 \alpha(n)^2 - 6 \beta(n)^2 - 6 \gamma(n)^2
  + 12 \beta(n) - 12 \gamma(n) + 8 \delta(n)
  + 3 \Delta_4^n - 6 \Delta_8^n - 3
  \Big).
  \end{align*}
\end{proposition}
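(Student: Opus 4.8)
The plan is to compute the number of solutions as the coefficient of a single power of $x$ in the polynomial $P_n(x)$ from Definition \ref{Pdefinition}, and then to extract a closed formula by carefully tracking the residue of $n$ modulo small integers. By Corollary \ref{polyacorollary} applied to $S = \{0,1,\hdots,n\}$, the number of partitions of an integer into four distinct parts from $S$ is the coefficient of the corresponding power of $x$ in $P_n(x)$. Here we need partitions of $W = 5n/2$; so the quantity we want is $[x^{5n/2}] P_n(x)$. First I would write out the cycle index $Z_{S_4}$ explicitly. Since
  \[
  Z_{S_4}(x_1,x_2,x_3,x_4)
  =
  \tfrac{1}{24}\big( x_1^4 + 6 x_1^2 x_2 + 3 x_2^2 + 8 x_1 x_3 + 6 x_4 \big),
  \]
substituting $x_j = (-1)^{j-1}\sum_{i=0}^n x^{ji}$ gives
  \[
  P_n(x)
  =
  \tfrac{1}{24}
  \Big(
  A(x)^4 - 6 A(x)^2 B(x) + 3 B(x)^2 + 8 A(x) C(x) - 6 D(x)
  \Big),
  \]
where $A(x) = \sum_{i=0}^n x^i$, $B(x) = \sum_{i=0}^n x^{2i}$, $C(x) = \sum_{i=0}^n x^{3i}$, $D(x) = \sum_{i=0}^n x^{4i}$, and the signs have been absorbed. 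Each of the five terms contributes separately to $[x^{5n/2}]$.

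The key computational step is to extract $[x^{5n/2}]$ from each of the five terms using Lemma \ref{coefficientformula}. The term $A(x)^4$ is handled directly by taking $t = 4$ and $\ell = 5n/2$ in the lemma, producing an alternating sum over $k$ of products of binomial coefficients; the upper limit $\min(4,\lfloor \tfrac{5n/2}{n+1}\rfloor) = 2$ for $n \ge 2$, so only $k = 0,1,2$ survive. For the mixed terms I would rewrite $B(x) = \sum_{i=0}^n (x^2)^i$, $C(x) = \sum_{i=0}^n (x^3)^i$, $D(x) = \sum_{i=0}^n (x^4)^i$, so that $A(x)^2 B(x)$, $A(x) C(x)$, and $D(x)$ become coefficient extractions of products of geometric-series powers in the variables $x, x^2, x^3, x^4$; each reduces to a convolution which, after substituting Lemma \ref{coefficientformula} for each factor, becomes a finite double sum of binomial products. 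The crucial subtlety is that whether a given target exponent is actually achievable — for example $[x^{5n/2}] D(x)$ is nonzero only when $5n/2 \equiv 0 \pmod 4$ — depends on the residue of $n$, which is exactly why the floor/ceiling functions $\alpha,\beta,\gamma,\delta$ and the indicator symbols $\Delta_4^n, \Delta_8^n$ appear in the final formula.

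After each term is reduced to a sum of binomial coefficients $\binom{\text{linear in }n}{j}$, I would expand these binomials as polynomials in $n$, being careful that the \emph{number} of surviving summands and the \emph{ranges} of the inner convolution sums shift with $n \bmod 4$ (from the factors involving $x^3$ and $x^4$) and $n \bmod 2$. The main obstacle — and the real content of the proof — is bookkeeping these case distinctions consistently: the ceilings $\alpha(n),\beta(n)$ and floors $\gamma(n),\delta(n)$ are precisely the closed-form encodings of sum bounds such as $\lfloor (5n/2)/3 \rfloor$ and $\lfloor (5n/2)/4 \rfloor$ after the substitutions, and the indicators $\Delta_4^n,\Delta_8^n$ capture the on/off contributions of boundary terms. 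I would therefore organize the calculation by reducing every floor and ceiling to one of the functions in Definition \ref{alpha} plus a correction $\Delta$-term, collect the coefficients of $n^3, n^2, n^1, n^0$ term by term across all five pieces, and verify that the leading coefficient collapses to $\tfrac{23}{1152}$ (a consistency check, since the dominant contribution comes from $\tfrac{1}{24}[x^{5n/2}]A(x)^4$). The quadratic and lower terms then assemble into the stated expression; a final numerical check against small even values of $n$ (say $n = 2,4,6,8$, computing the partition count directly) would confirm the algebra. I expect the purely mechanical expansion to be routine; the delicate part is ensuring that the residue-dependent boundary terms are assigned to the correct $\Delta$-symbol and that ceilings versus floors are not interchanged.
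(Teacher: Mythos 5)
Your proposal follows essentially the same route as the paper's proof: apply Corollary \ref{polyacorollary} to reduce the count to $[x^{5n/2}]P_n(x)$, expand the cycle index of $S_4$ into the five terms (with the signs from the alternating substitution absorbed correctly), extract the coefficient of each term via Lemma \ref{coefficientformula} and index-shifted convolutions, and encode the residue-dependent boundary behaviour in the floor/ceiling functions of Definition \ref{alpha} and the indicators $\Delta_4^n$, $\Delta_8^n$ before collecting powers of $n$. The plan is correct and matches the paper's argument in both decomposition and key lemma, so no further comparison is needed.
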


\begin{proof}
By Corollary \ref{polyacorollary}, we need to find the coefficient of
$x^{5n/2}$ in the polynomial $P_n(x)$ of Definition \ref{Pdefinition}. The
cycle index of $S_4$ is
  \[
  Z_{S_4}(x_1,x_2,x_3,x_4)
  =
  \frac{1}{24}
  \big(
  x_1^4 + 6 x_1^2 x_2 + 8 x_1 x_3 + 3x^2_2 + 6 x_4
  \big).
  \]
For the first four terms, we need to evaluate the following products:
  \[
  A = \bigg( \sum_{i=0}^n x^i \bigg)^4,
  \;
  B = \bigg( \sum_{i=0}^n x^i \bigg)^2 \sum_{i=0}^n x^{2i},
  \;
  C = \sum_{i=0}^n x^i \sum_{i=0}^n x^{3i},
  \;
  D = \bigg( \sum_{i=0}^n x^{2i} \bigg)^2.
  \]
Lemma \ref{coefficientformula} gives
  \[
  A
  =
  \sum_{\ell=0}^{4n}
  \sum_{k=0}^{\min\left( 4, \left\lfloor \frac{\ell}{n+1} \right\rfloor \right)}
  (-1)^k
  \binom{4}{k}
  \binom{\ell{-}(n{+}1)k{+}3}{3}
  x^\ell.
  \]
Similarly,
  \[
  B
  =
  \Bigg(
  \sum_{\ell=0}^{2n}
  \sum_{k=0}^{\min\left( 2, \left\lfloor \frac{\ell}{n+1} \right\rfloor \right)}
  (-1)^k
  \binom{2}{k}
  \big( \ell{-}(n{+}1)k{+}1 \big)
  x^\ell
  \Bigg)
  \sum_{i=0}^n x^{2i}.
  \]
The upper limit of $k$ is 0 for $0 \le \ell \le n$, and 1 for $n{+}1 \le \ell
\le 2n$. Hence
  \allowdisplaybreaks
  \begin{align*}
  B
  &=
  \Bigg(
  \sum_{\ell=0}^n
  \big( \ell{+}1 \big)
  x^\ell
  +
  \sum_{\ell=n+1}^{2n}
  \big[ (\ell{+}1){-}2(\ell{-}n) \big]
  x^\ell
  \Bigg)
  \sum_{i=0}^n x^{2i}
  \\
  &=
  \Bigg(
  \sum_{\ell=0}^n
  \big( \ell{+}1 \big)
  x^\ell
  +
  \sum_{\ell=n+1}^{2n}
  \big( 2n{-}\ell{+}1 \big)
  x^\ell
  \Bigg)
  \sum_{i=0}^n x^{2i}
  \\
  &=
  \sum_{\ell=0}^n
  \sum_{m=0}^n
  (\ell{+}1)
  x^{\ell+2m}
  +
  \sum_{\ell=n+1}^{2n}
  \sum_{m=0}^n
  (2n{-}\ell{+}1)
  x^{\ell+2m}.
  \end{align*}
We now set $p = \ell {+} 2m$, so that $\ell = p{-}2m$.  For $0 \le \ell \le n$
we have $0 \le p{-}2m \le n$ and so $\frac12 (p{-}n) \le m \le \frac12 p$, but
$m \in \mathbb{Z}$ so $\lceil \frac12 (p{-}n) \rceil \le m \le \lfloor \frac12
p \rfloor$; since also $0 \le m \le n$ we get $\max( 0, \lceil \frac12 (p{-}n)
\rceil ) \le m \le \min( n, \lfloor \frac12 p \rfloor )$. Similarly, for $n{+}1
\le \ell \le 2n$ we obtain $\max \left( 0, \left\lceil \frac12(p{-}2n)
\right\rceil \right) \le m \le \min \left( n, \left\lfloor \frac12
(p{-}(n{+}1)) \right\rfloor \right)$. Therefore
  \[
  B
  =
  \sum_{p=0}^{3n}
  \sum_{m=\max\left(0,\left\lceil\frac{p-n}{2}\right\rceil\right)}
  ^{\min\left(n,\left\lfloor\frac{p}{2}\right\rfloor\right)}
  \!\!\!\!\!\!\!\!
  \left(
  p{-}2m {+} 1
  \right)
  x^p
  +
  \sum_{p=n+1}^{4n}
  \sum_{m=\max\left(0,\left\lceil\frac{p-2n}{2}\right\rceil\right)}
  ^{\min\left(n,\left\lfloor\frac{p-(n+1)}{2}\right\rfloor\right)}
  \!\!\!\!\!\!\!\!
  \left(
  2n {-} (p{-}2m) {+} 1
  \right)
  x^p.
  \]
Using a similar change of index we obtain
  \allowdisplaybreaks
  \begin{align*}
  C
  &=
  \sum_{i=0}^n
  \sum_{j=0}^n
  x^{i+3j}
  =
  \sum_{p=0}^{4n}
  \sum_{m=\max(0,\lceil\frac{p-n}{3}\rceil)}^{\min(n,\lfloor\frac{p}{3}\rfloor)}
  \!\!\!\!\!\!
  x^p
  \\
  &=
  \sum_{p=0}^{4n}
  \left[
  \min\left(n,\left\lfloor\frac{p}{3}\right\rfloor\right){-}
  \max\left(0,\left\lceil\frac{p{-}n}{3}\right\rceil\right){+}1
  \right]
  x^p.
  \end{align*}
Replacing $x$ by $x^2$ in Lemma \ref{coefficientformula} gives
  \[
  D
  =
  \sum_{\ell=0}^n
  (\ell{+}1)
  x^{2\ell}
  +
  \sum_{\ell=n+1}^{2n}
  (2n{-}\ell{+}1)
  x^{2\ell}.
  \]
We now write
  \[
  E = \sum_{\ell=0}^n x^{4\ell},
  \]
and obtain
  \allowdisplaybreaks
  \begin{align*}
  &
  A - 6 B + 8 C + 3 D - 6 E
  \\
  &=
  \sum_{p=0}^{4n}
  \sum_{k=0}^{\min\left( 4, \left\lfloor \frac{p}{n+1} \right\rfloor \right)}
  (-1)^k
  \binom{4}{k}
  \binom{p{-}(n{+}1)k{+}3}{3}
  x^p
  \\
  &\quad
  - 6
  \Bigg[
  \sum_{p=0}^{3n}
  \sum_{m=\max\left(0,\left\lceil\frac{p-n}{2}\right\rceil\right)}
  ^{\min\left(n,\left\lfloor\frac{p}{2}\right\rfloor\right)}
  \!\!\!\!\!\!\!\!
  \left(
  p{-}2m {+} 1
  \right)
  x^p
  +
  \sum_{p=n+1}^{4n}
  \sum_{m=\max\left(0,\left\lceil\frac{p-2n}{2}\right\rceil\right)}
  ^{\min\left(n,\left\lfloor\frac{p-(n+1)}{2}\right\rfloor\right)}
  \!\!\!\!\!\!\!\!
  \left(
  2n {-} (p{-}2m) {+} 1
  \right)
  x^p
  \Bigg]
  \\
  &\quad
  +
  8
  \Bigg[
  \sum_{p=0}^{4n}
  \left[
  \min\left(n,\left\lfloor\frac{p}{3}\right\rfloor\right){-}
  \max\left(0,\left\lceil\frac{p{-}n}{3}\right\rceil\right){+}1
  \right]
  x^p
  \Bigg]
  \\
  &\quad
  +
  3
  \Bigg[
  \sum_{\ell=0}^n
  (\ell{+}1)
  x^{2\ell}
  +
  \sum_{\ell=n+1}^{2n}
  (2n{-}\ell{+}1)
  x^{2\ell}
  \Bigg]
  -
  6
  \sum_{\ell=0}^n x^{4\ell}.
  \end{align*}
We need the coefficient $T$ of $x^{5n/2}$ in the last equation:
  \allowdisplaybreaks
  \begin{align*}
  T
  &=
  \sum_{k=0}^{\left\lfloor\frac{5n}{2(n+1)}\right\rfloor}
  (-1)^k
  \binom{4}{k}
  \binom{\frac{5n}{2}{-}(n{+}1)k{+}3}{3}
  \\
  &\quad
  -
  6
  \Bigg[
  \sum_{m=\lceil\frac{3n}{4}\rceil}^n
  \left(\frac{5n}{2}{-}2m{+}1\right)
  +
  \sum_{m=\left\lceil\frac{n}{4}\right\rceil}^{\left\lfloor\frac{3n-2}{4}\right\rfloor}
  \left(2m{-}\frac{n}{2}{+}1\right)
  \Bigg]
  \\
  &\quad
  +
  8
  \left( \left\lfloor \frac{5n}{6} \right\rfloor {-} \frac{n}{2} {+} 1 \right)
  +
  3 \,
  \delta_4^n \left( 0 {+} \frac{3n}{4} {+} 1 \right)
  -
  6 \,
  \delta_8^n.
  \end{align*}
For $n = 0$ and $n = 2$ we get $T = 0$; this is expected since the
$\sltwo$-modules $V(0)$ and $V(2)$ have dimensions 1 and 3 respectively, so in
both cases $\Lambda^4 V(n)$ is $\{0\}$.  For $n \ge 4$ the upper limit of $k$
is 2, and we use the formula
  \[
  \sum_{m=a}^b m = \frac12 (b-a+1)(b+a).
  \]
We obtain
  \allowdisplaybreaks
  \begin{align*}
  T
  &=
  \binom{\frac{5n}{2}{+}3}{3}
  -
  4
  \binom{\frac{3n}{2}{+}2}{3}
  +
  6
  \binom{\frac{n}{2}{+}1}{3}
  -
  6
  \left(n{-}\left\lceil\frac{3n}{4}\right\rceil{+}1\right)
  \left(\frac{5n}{2}{+}1\right)
  \\
  &\quad
  +
  6
  \left(n{-}\left\lceil\frac{3n}{4}\right\rceil{+}1\right)
  \left(n{+}\left\lceil\frac{3n}{4}\right\rceil\right)
  -
  6
  \left(\left\lfloor\frac{3n{-}2}{4}\right\rfloor{-}\left\lceil\frac{n}{4}\right\rceil{+}1\right)
  \left(-\frac{n}{2}{+}1\right)
  \\
  &\quad
  -
  6
  \left(\left\lfloor\frac{3n{-}2}{4}\right\rfloor{-}\left\lceil\frac{n}{4}\right\rceil{+}1\right)
  \left(\left\lfloor\frac{3n{-}2}{4}\right\rfloor{+}\left\lceil\frac{n}{4}\right\rceil\right)
  +
  8 \left\lfloor\frac{5n}{6}\right\rfloor - 4 n + 8
  \\
  &\quad
  +
  3 \, \delta_4^n \left( \frac{3n}{4} {+} 1 \right)
  -
  6 \, \delta_8^n.
  \end{align*}
Expanding this and collecting terms with the same power of $n$ gives
  \allowdisplaybreaks
  \begin{align*}
  &
  \frac{23}{48} n^3
  -
  \frac{29}{4} n^2
  +
  \frac{1}{12}
  \Big(
  -36 \alpha(n) + 180 \beta(n) + 36 \gamma(n) + 27 \Delta_4^n - 167
  \Big)
  n
  \\
  &
  +
  \Big(
  6 \alpha(n)^2 - 6 \beta(n)^2 - 6 \gamma(n)^2
  + 12 \beta(n) + 8 \delta(n) - 12 \gamma(n)
  + 3 \Delta_4^n - 6 \Delta_8^n - 3
  \Big).
  \end{align*}
We check that this gives $T = 0$ for $n = 0$ and $n = 2$. Finally, we divide by
24.
\end{proof}

\begin{corollary} \label{dimension5ncorollary}
For even $n \in \mathbb{Z}$, write $n = 24 q + r$ with $q, r \in \mathbb{Z}$
and $0 \le r < 24$.  The dimension $\dim [ \Lambda^4 V(n) ]_n$, of the weight
space of weight $n$ in the $\sltwo$-module $\Lambda^4 V(n)$, is given in Table
\ref{multiplicitytable}. (Note the denominator 1152.)
\end{corollary}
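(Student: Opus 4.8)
The plan is to take the closed-form cubic expression for $T = \dim[\Lambda^4 V(n)]_n$ produced in Proposition \ref{dimension5n} and evaluate it modulo the residue of $n$ modulo $24$, thereby resolving all the floor, ceiling, and $\Delta$ terms into explicit polynomials in $n$. The key observation is that every non-polynomial ingredient in the formula—namely $\alpha(n) = \lceil n/4 \rceil$, $\beta(n) = \lceil 3n/4 \rceil$, $\gamma(n) = \lfloor (3n{-}2)/4 \rfloor$, $\delta(n) = \lfloor 5n/6 \rfloor$, and the indicators $\Delta_4^n$, $\Delta_8^n$—is \emph{eventually periodic} in $n$ with a period dividing $24$. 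Indeed $\alpha, \beta, \gamma$ depend only on $n \bmod 4$, $\delta$ depends only on $n \bmod 6$, and the indicators depend on $n \bmod 4$ and $n \bmod 8$; the common period is $\mathrm{lcm}(4,6,8) = 24$, which is precisely why the corollary partitions the even residues into the twelve classes $r = 0, 2, 4, \dots, 22$.

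First I would fix an even residue $r$ with $0 \le r < 24$ and write $n = 24q + r$. For each of the floor/ceiling functions I replace it by its exact value: for example, writing $n = 4\lfloor n/4\rfloor + (n \bmod 4)$ lets me express $\alpha(n)$, $\beta(n)$, $\gamma(n)$ as linear functions of $n$ with a constant correction depending only on $r \bmod 4$; similarly $\delta(n) = (5n - (5n \bmod 6))/6$ becomes linear in $n$ with a correction fixed by $r \bmod 6$. The indicators $\Delta_4^n$ and $\Delta_8^n$ become honest constants ($0$ or $1$) once $r$ is fixed. Substituting these piecewise-linear replacements into the cubic of Proposition \ref{dimension5n} collapses it, for each $r$, into a genuine polynomial in $n$ of degree at most $2$ (the leading $\frac{23}{48}n^3$ term from $T$ is cancelled after division by $24$ only in the sense that it is absorbed; in fact the $n^3$ coefficients of $\alpha(n)^2$-type terms and the cubic combine to leave the quadratic shape $\frac{1}{1152}(30n^2 + \cdots)$ asserted in Theorem \ref{multiplicitytheorem}). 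I would carry this out once for each of the twelve residue classes and read off the constant and linear coefficients.

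The main obstacle—really the only delicate point—is bookkeeping: one must be scrupulous about the fractional corrections contributed by each floor and ceiling, since an off-by-one error in, say, $\lceil 3n/4 \rceil$ for a particular $r$ propagates into both the linear term (through the $\beta(n)$, $\gamma(n)$ appearing multiplied by $n$) and the constant term (through $\beta(n)^2$, $\gamma(n)^2$). Because $\beta$ and $\gamma$ enter quadratically, their residue-dependent corrections interact, and it is easy to mismatch the class boundaries where $\lfloor (3n{-}2)/4\rfloor$ and $\lceil 3n/4 \rceil$ jump. To control this I would tabulate, for each residue $r \in \{0,2,\dots,22\}$, the exact integer values of $\alpha, \beta, \gamma, \delta, \Delta_4^n, \Delta_8^n$ as affine functions of $q$ (equivalently of $n$), and only then substitute.

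Finally, I would verify the resulting table against independent data: the explicit weight-space dimensions recorded for small $n$ in Remark \ref{completedecomposition} (which give the multiplicities, hence via Lemma \ref{multiplicityformula} the values $\dim[\Lambda^4 V(n)]_n$ for $n = 4, 6, 8, 10$), together with the sanity checks $T = 0$ at $n = 0$ and $n = 2$ already noted in the proof of Proposition \ref{dimension5n}. Matching every entry of the table to these known values confirms that the twelve quadratics have been computed correctly, and displaying them with the common denominator $1152$ yields exactly the expressions tabulated in Table \ref{multiplicitytable}, completing the proof of the corollary.
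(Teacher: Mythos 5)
Your proposal follows exactly the paper's argument: every floor, ceiling, and indicator function appearing in Proposition \ref{dimension5n} is periodic in $n$ with period dividing $24 = \mathrm{lcm}(4,6,8)$, so substituting their explicit values for each even residue $r$ collapses the formula into one polynomial per residue class, which is precisely how the paper produces the first column of Table \ref{multiplicitytable}. One slip to correct: the resulting polynomials for $\dim[\Lambda^4 V(n)]_n$ are \emph{cubic} in $n$ (the leading term $\tfrac{23}{1152}n^3$ survives, as the table's entries $23n^3 - 42n^2 + \cdots$ show); only the multiplicity of Theorem \ref{multiplicitytheorem}, being the difference of the weight-$n$ and weight-$(n{+}2)$ dimensions whose leading cubic terms agree, reduces to the quadratic $\tfrac{1}{1152}(30n^2 + \cdots)$.
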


\begin{proof}
The dimension is given by the formula of Proposition \ref{dimension5n}. The LCM
of the denominators of the functions $\alpha(n)$, $\beta(n)$, $\gamma(n)$,
$\delta(n)$ and the periods of the functions $\Delta_4^n$ and $\Delta_8^n$
equals 24. Hence the dimension is given by a cubic polynomial in $n$ which
depends on the remainder of $n$ modulo 24.  Considering each remainder $r$
separately, we obtain the polynomials in the first column of Table
\ref{multiplicitytable}.
\end{proof}

\begin{definition} \label{epsilon}
We consider the following integer-valued functions of $n$:
  \[
  \epsilon(n) = \left\lfloor\frac{3n}{4}\right\rfloor, \quad
  \zeta(n) = \left\lceil\frac{n{+}2}{4}\right\rceil, \quad
  \eta(n) = \left\lceil\frac{3n{+}2}{4}\right\rceil, \quad
  \theta(n) = \left\lfloor\frac{5n{+}2}{6}\right\rfloor.
  \]
\end{definition}

\begin{proposition} \label{dimension5n+2}
For even $n \in \mathbb{Z}$, the number of solutions $P,Q,R,S \in \mathbb{Z}$
to
  \[
  n \ge P > Q > R > S \ge 0,
  \qquad
  P + Q + R + S = \frac{5n{+}2}{2},
  \]
equals
  \allowdisplaybreaks
  \begin{align*}
  &
  \frac{23}{1152} \, n^3
  -
  \frac{21}{64} \, n^2
  +
  \frac{1}{288}
  \Big( 36 \epsilon(n) - 36 \zeta(n) + 180 \eta(n) + 27 \Delta_{4,2}^n - 254 \Big) n
  \\
  &
  +
  \frac{1}{48}
  \Big(
  - 12 \epsilon(n)^2 + 12 \zeta(n)^2 - 12 \eta(n)^2
  - 12 \epsilon(n) - 12 \zeta(n) + 36 \eta(n) + 16 \theta(n)
  \\
  &\qquad\qquad
  + 3 \Delta_{4,2}^n - 12 \Delta_{8,6}^n - 24
  \Big).
  \end{align*}
\end{proposition}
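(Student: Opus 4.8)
\section*{Proof proposal for Proposition \ref{dimension5n+2}}

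The plan is to imitate the proof of Proposition \ref{dimension5n} verbatim, replacing the target weight $\tfrac{5n}{2}$ by $\tfrac{5n+2}{2}$; this is an integer precisely because $n$ is even, which is our standing hypothesis. By Corollary \ref{polyacorollary} with $S=\{0,1,\hdots,n\}$, the number of solutions equals the coefficient of $x^{(5n+2)/2}$ in the polynomial $P_n(x)$ of Definition \ref{Pdefinition}. Since $P_n(x)=\tfrac{1}{24}(A-6B+8C+3D-6E)$ in terms of the five series $A,B,C,D,E$ already expanded in the proof of Proposition \ref{dimension5n}, it suffices to extract the coefficient $T'$ of $x^{(5n+2)/2}$ from each of these and assemble the total $T'=[A]-6[B]+8[C]+3[D]-6[E]$, finally dividing by $24$.

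First I would dispose of $n=0$ and $n=2$ directly: here $\{0,1,\hdots,n\}$ has fewer than four elements, so there are no partitions into four distinct parts, and one checks that the resulting formula vanishes. For $n\ge 4$ the upper limit of the index $k$ in Lemma \ref{coefficientformula} is $2$, giving the contribution of $A$ as $\binom{(5n+2)/2+3}{3}-4\binom{3n/2+3}{3}+6\binom{n/2+2}{3}$. Because the target $\tfrac{5n+2}{2}$ lies in the overlap of the two summation ranges of $B$ and $C$, I would read off the inner $m$-summation limits at the shifted target: the maxima and minima produce exactly the functions $\epsilon(n)=\lfloor 3n/4\rfloor$, $\zeta(n)=\lceil(n+2)/4\rceil$, $\eta(n)=\lceil(3n+2)/4\rceil$, and $\theta(n)=\lfloor(5n+2)/6\rfloor$ of Definition \ref{epsilon}, after which $\sum_{m=a}^{b}m=\tfrac12(b-a+1)(b+a)$ evaluates the finite sums. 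The terms $D$ and $E$ contain only even, respectively fourth, powers of $x$, so they contribute to the coefficient of $x^{(5n+2)/2}$ only under the congruences $4\mid(5n+2)$ and $8\mid(5n+2)$, that is $n\equiv 2\ (\mathrm{mod}\ 4)$ and $n\equiv 6\ (\mathrm{mod}\ 8)$; these are precisely the events recorded by $\Delta_{4,2}^n$ and $\Delta_{8,6}^n$, contributing $3\Delta_{4,2}^n\cdot\tfrac{3n+2}{4}$ and $-6\Delta_{8,6}^n$.

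Assembling $T'$, expanding the binomial coefficients and the products $(n-\eta+1)(\cdots)$ and $(\epsilon-\zeta+1)(\cdots)$ as polynomials in $n$ with $\epsilon,\zeta,\eta,\theta$ kept symbolic, collecting terms, and dividing by $24$ yields the stated formula. Several sanity checks guide the computation: the cubic term comes entirely from $A$ and equals $\tfrac{125-108+6}{48\cdot 24}n^3=\tfrac{23}{1152}n^3$, matching Proposition \ref{dimension5n}; the pure quadratic term $\tfrac98-9=-\tfrac{63}{8}$, once divided by $24$, gives $-\tfrac{21}{64}n^2$; and one verifies $T'=0$ at $n=0,2$ as above. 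The main obstacle is entirely bookkeeping rather than conceptual: correctly determining the shifted max/min summation limits and tracking which contributions survive only under the modular conditions governing $D$ and $E$, since the P\'olya-enumeration apparatus was already established for Proposition \ref{dimension5n}.
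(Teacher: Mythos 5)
Your proposal is correct and follows essentially the same route as the paper, whose proof of Proposition \ref{dimension5n+2} is stated simply as ``Similar to the proof of Proposition \ref{dimension5n}'': you extract the coefficient of $x^{(5n+2)/2}$ from $\frac{1}{24}(A-6B+8C+3D-6E)$, and your shifted binomial arguments, the identification of the summation limits with $\epsilon(n),\zeta(n),\eta(n),\theta(n)$, and the congruence conditions $n\equiv 2\ (\mathrm{mod}\ 4)$ and $n\equiv 6\ (\mathrm{mod}\ 8)$ governing the $D$ and $E$ contributions all check out. The leading-coefficient sanity checks ($\tfrac{23}{1152}$ and $-\tfrac{21}{64}$) are also verified correctly.
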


\begin{proof}
Similar to the proof of Proposition \ref{dimension5n}.
\end{proof}

\begin{corollary}
For even $n \in \mathbb{Z}$, write $n = 24 q + r$ with $q, r \in \mathbb{Z}$
and $0 \le r < 24$.  The dimension $\dim [ \Lambda^4 V(n) ]_{n+2}$, of the
weight space of weight $n{+}2$ in the $\sltwo$-module $\Lambda^4 V(n)$, is
given in Table \ref{multiplicitytable}. (Note the denominator 1152.)
\end{corollary}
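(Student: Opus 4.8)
The plan is to mirror exactly the proof of Corollary \ref{dimension5ncorollary}, replacing the weight $n$ by the weight $n{+}2$ and invoking Proposition \ref{dimension5n+2} in place of Proposition \ref{dimension5n}. First I would recall that, by Lemma \ref{weightspacedimension} specialized to $k = 4$, the dimension $\dim[\Lambda^4 V(n)]_{n+2}$ equals the number of quadruples $(p,q,r,s)$ satisfying the conditions \eqref{conditions} with $w = n{+}2$. Applying the substitution $P = (p+n)/2$, $Q = (q+n)/2$, $R = (r+n)/2$, $S = (s+n)/2$ set up just before Proposition \ref{dimension5n}, and noting that here $W = (w+4n)/2 = (5n+2)/2$, this count is precisely the number of integer solutions of $n \ge P > Q > R > S \ge 0$ with $P + Q + R + S = (5n+2)/2$. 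Thus the desired dimension is given verbatim by the closed expression of Proposition \ref{dimension5n+2}, and no further work is needed to connect the two.

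Second, I would argue that this expression is periodic in $n$ with period $24$. The auxiliary functions $\epsilon(n) = \lfloor 3n/4 \rfloor$, $\zeta(n) = \lceil (n+2)/4 \rceil$, and $\eta(n) = \lceil (3n+2)/4 \rceil$ each depend only on $n \bmod 4$, while $\theta(n) = \lfloor (5n+2)/6 \rfloor$ depends only on $n \bmod 6$; similarly the indicators $\Delta_{4,2}^n$ and $\Delta_{8,6}^n$ have periods $4$ and $8$. Since $\mathrm{lcm}(4,6,8) = 24$, once the residue $r = n \bmod 24$ is fixed, every floor, ceiling, and indicator in the formula reduces to an explicit affine function of $n$ (or a constant). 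The terms $\epsilon n$, $\zeta n$, $\eta n$ then contribute quadratics, the terms $\epsilon^2$, $\zeta^2$, $\eta^2$ contribute further quadratics, and $\theta$ contributes a linear term, so the whole expression collapses to a cubic polynomial in $n$ whose coefficients depend only on $r$, exactly as in the proof of Corollary \ref{dimension5ncorollary}.

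Finally, I would carry out the case analysis. Writing $n = 24q + r$ and running through the twelve even residues $r \in \{0, 2, 4, \dots, 22\}$, one substitutes the corresponding values of $\epsilon, \zeta, \eta, \theta, \Delta_{4,2}^n, \Delta_{8,6}^n$ into the formula of Proposition \ref{dimension5n+2} and simplifies; placing everything over the common denominator $1152$ yields the twelve cubic polynomials recorded in the second column of Table \ref{multiplicitytable}. The only real obstacle is the bookkeeping: there is no conceptual difficulty, but one must evaluate the floors, ceilings, and residue indicators correctly for each of the twelve classes and carry out the ensuing simplification without error, since a single miscomputed value would corrupt one entry. As an internal consistency check I would verify that subtracting the present formula from that of Corollary \ref{dimension5ncorollary} reproduces the multiplicity formula of Theorem \ref{multiplicitytheorem}, since by Lemma \ref{multiplicityformula} the multiplicity of $V(n)$ in $\Lambda^4 V(n)$ equals $\dim[\Lambda^4 V(n)]_n - \dim[\Lambda^4 V(n)]_{n+2}$; and I would cross-check against direct enumeration of the weight space for small even $n$ such as $n = 4, 6, 8, 10$, using the decompositions recorded in Remark \ref{completedecomposition}.
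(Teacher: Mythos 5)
Your proposal is correct and follows essentially the same route as the paper: the dimension is read off from Proposition \ref{dimension5n+2}, the floor/ceiling functions and residue indicators are observed to collapse to affine functions of $n$ once $n \bmod 24$ is fixed (the paper likewise notes that the LCM of the relevant denominators and periods is $24$), and the twelve even residues are handled case by case to produce the cubics in Table \ref{multiplicitytable}. The only quibble is the phrasing that $\epsilon,\zeta,\eta$ ``depend only on $n \bmod 4$'' --- strictly they differ from linear functions of $n$ by corrections depending on $n \bmod 4$ --- but your subsequent sentence states this correctly, so nothing is amiss.
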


\begin{proof}
Similar to the proof of Corollary \ref{dimension5ncorollary}.
\end{proof}

\begin{theorem}
The multiplicity $\mathrm{mult}(n) = \dim \mathrm{Hom}_{\sltwo}\big( \Lambda^4
V(n), V(n) \big)$ of the irreducible representation $V(n)$ of $\sltwo$ as a
summand of its fourth exterior power $\Lambda^4 V(n)$ is given in Table
\ref{multiplicitytable}. (Note the denominator 1152.)
\end{theorem}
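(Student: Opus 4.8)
The plan is to combine the two weight-space counts already established. By Lemma \ref{multiplicityformula}, applied to $M = \Lambda^4 V(n)$, the multiplicity of $V(n)$ as a summand equals $\dim M_n - \dim M_{n+2}$, the difference of the dimensions of the weight spaces of weights $n$ and $n+2$. For odd $n$ both of these weight spaces are trivial: there are no quadruples $p > q > r > s$ of the correct parity summing to $n$ or $n+2$, as noted after Lemma \ref{weightspacedimension}, so the multiplicity is $0$ and the first assertion follows. For even $n$ the two dimensions are precisely the quantities computed in Corollary \ref{dimension5ncorollary} and in the corollary following Proposition \ref{dimension5n+2}; these are the cubic expressions of Propositions \ref{dimension5n} and \ref{dimension5n+2}, reorganized according to the residue $r = n \bmod 24$.

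First I would form the difference of these two closed forms. The two cubic polynomials share the leading coefficient $\frac{23}{1152}$, so the $n^3$ terms cancel and the multiplicity is a quadratic in $n$. Subtracting the $n^2$ coefficients, $-\frac{29}{96}$ from Proposition \ref{dimension5n} against $-\frac{21}{64}$ from Proposition \ref{dimension5n+2}, gives $\frac{5}{192} = \frac{30}{1152}$, which is independent of $r$ and accounts for the uniform $30n^2$ appearing in every case of Theorem \ref{multiplicitytheorem}.

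The remaining work is to evaluate the linear and constant contributions, which are carried by the floor and ceiling functions $\alpha,\beta,\gamma,\delta$ and $\epsilon,\zeta,\eta,\theta$ together with the indicator functions $\Delta_4^n$, $\Delta_8^n$, $\Delta_{4,2}^n$, $\Delta_{8,6}^n$. Since each of these is periodic with period dividing $24$, I would tabulate their values on each even residue $r \in \{0,2,4,\ldots,22\}$, substitute into the difference, and simplify. This produces, for each $r$, a quadratic $30n^2 + (\text{linear})$ over $1152$, and recording all twelve results fills out the multiplicity column of Table \ref{multiplicitytable}, establishing the theorem.

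The main obstacle will not be any single conceptual step, since the representation-theoretic reduction is furnished by Lemma \ref{multiplicityformula} and the hard enumeration is already packaged in Propositions \ref{dimension5n} and \ref{dimension5n+2}; rather it is the bookkeeping needed to verify that distinct residues collapse to the same formula (for instance $r = 0$ and $r = 16$ both yielding $30n^2 + 96n$, or $r = 6$ and $r = 22$ both yielding $30n^2 - 120n + 792$). Confirming these coincidences requires careful, case-by-case evaluation of the eight floor and ceiling functions and the four indicators at each residue, and this is where an arithmetic slip is most likely; I would cross-check the resulting formulas against the explicit small-$n$ values in Table \ref{multiplicityexample}.
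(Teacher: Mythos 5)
Your proposal is correct and follows essentially the same route as the paper, whose entire proof is to apply Lemma \ref{multiplicityformula} to Propositions \ref{dimension5n} and \ref{dimension5n+2} and tabulate the difference residue-by-residue modulo $24$ (the per-residue reduction being exactly what Corollary \ref{dimension5ncorollary} and its companion carry out). One small caution: your claim that subtracting the explicit coefficients $-\tfrac{29}{96}$ and $-\tfrac{21}{64}$ ``accounts for'' the uniform $30n^2$ is incomplete as stated, since $\alpha,\beta,\gamma$ and $\epsilon,\zeta,\eta$ grow linearly in $n$ and therefore also contribute to the quadratic term through the $n\cdot\alpha(n)$ and $\alpha(n)^2$ pieces — those contributions happen to cancel between the two propositions, and in any case your planned full per-residue substitution (checked against Table \ref{multiplicityexample}) makes the argument rigorous.
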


\begin{proof}
Apply Lemma \ref{multiplicityformula} to Propositions \ref{dimension5n} and
\ref{dimension5n+2}.
\end{proof}

\begin{remark}
We can use Corollary \ref{polyacorollary} to obtain another proof of the
decompositions in Remark \ref{completedecomposition}. For $n = 4, 6, 8, 10$ we
compute the polynomial $P_n(x)$ from Definition \ref{Pdefinition}. In each
case, the coefficient of $x^{(w+4n)/2}$ is $\dim [ \Lambda^4 V(n) ]_w$, and we
then apply Lemma \ref{multiplicityformula} to find the multiplicity of $V(w)$
in $\Lambda^4 V(n)$:
  \begin{align*}
  P_4(x)
  &=
  x^{10} + x^9 + x^8 + x^7 + x^6,
  \\
  P_6(x)
  &=
  x^{18} + x^{17} + 2 x^{16} + 3 x^{15} + 4 x^{14} + 4 x^{13} + 5 x^{12} + 4 x^{11}
  + 4 x^{10} + 3 x^9 + 2 x^8
  \\
  &\quad
  + x^7 + x^6,
  \\
  P_8(x)
  &=
  x^{26} + x^{25} + 2 x^{24} + 3 x^{23} + 5 x^{22} + 6 x^{21} + 8 x^{20} + 9 x^{19}
  + 11 x^{18} + 11 x^{17}
  \\
  &\quad
  + 12 x^{16} + 11 x^{15} + 11 x^{14} + 9 x^{13} + 8 x^{12} + 6 x^{11} + 5 x^{10} + 3 x^9
  + 2 x^8 + x^7
  \\
  &\quad
  + x^6,
  \\
  P_{10}(x)
  &=
  x^{34} + x^{33} + 2 x^{32} + 3 x^{31} + 5 x^{30} + 6 x^{29} + 9 x^{28} + 11 x^{27}
  + 14 x^{26} + 16 x^{25}
  \\
  &\quad
  + 19 x^{24} + 20 x^{23} + 23 x^{22} + 23 x^{21} + 24 x^{20} + 23 x^{19} + 23 x^{18}
  + 20 x^{17}
  \\
  &\quad
  + 19 x^{16} + 16 x^{15} + 14 x^{14} + 11 x^{13} +  9 x^{12} + 6 x^{11} + 5 x^{10}
  + 3 x^9 + 2 x^8
  \\
  &\quad
  + x^7 + x^6.
  \end{align*}
\end{remark}


\section{Conclusion}

We recovered a 5-dimensional 4-Lie algebra from the isomorphism $\Lambda^4 V(4)
\cong V(4)$. This algebra satisfies the quaternary derivation identity $D$, and
hence also the quaternary alternating sum identity $S$. We found that the
identity $S$ is also satisfied by the unique structure on $V(6)$, every
structure $f + xg$ on $V(8)$, and the structure $f + \frac54 g$ on $V(10)$.  By
Bremner \cite{Bremner1} it is known that the quaternary alternating sum
operation in a totally associative quadruple system also satisfies $S$. This
raises the question whether the structures which satisfy $S$ can be embedded
into totally associative quadruple systems if the original associative
operation is replaced by the quaternary alternating sum. An affirmative answer
to this question would provide a partial generalization of the
Poincar\'e-Birkhoff-Witt theorem for Lie algebras; see Pozhidaev
\cite{Pozhidaev} for related work.  Simple associative $n$-tuple systems were
classified by Carlsson \cite{Carlsson}. In particular, for $n = 4$, any simple
associative quadruple system is isomorphic to a subspace of matrices of the
form
  \[
  \left[
  \begin{array}{ccc}
  0 & 0 & Z \\
  X & 0 & 0 \\
  0 & Y & 0
  \end{array}
  \right]
  \]
where $X$, $Y$, $Z$ have sizes $q \times p$, $r \times q$, $p \times r$
(respectively) and $p, q, r$ are positive integers. It would be very
interesting to determine whether any of the alternating quaternary algebras
presented in this paper are isomorphic to such a subspace of matrices under the
quaternary alternating sum.


\section*{Acknowledgements}

We thank the referee for a careful reading of the original version and for
bringing to our attention a number of errors and inconsistencies.  This work
forms part of the doctoral thesis of the second author, written under the
supervision of the first author. The first author was partially supported by
NSERC, the Natural Sciences and Engineering Research Council of Canada. The
second author was supported by a University Graduate Scholarship from the
University of Saskatchewan.




  \begin{table}
  {
  \begin{center}
  \[
  \begin{array}{r|l|l|l}
  r  &
  1152 \, \dim [ \Lambda^4 V(n) ]_n &
  1152 \, \dim [ \Lambda^4 V(n) ]_{n+2} &
  1152 \; \mathrm{mult}[V(n)]
  \\
  \hline
  &&
  \\[-10pt]
  0 & 23 n^3 - 42 n^2 + 48 n  & 23 n^3  - 72 n^2  - 48 n & 30n^2+96n
  \\
  2 & 23 n^3 - 42 n^2 - 60 n + 104  & 23 n^3  - 72 n^2  + 60 n - 16 & 30n^2-120n+120
  \\
  4 & 23 n^3 - 42 n^2 + 48 n + 160  & 23 n^3  - 72 n^2  - 48 n - 128 & 30n^2+96n+288
  \\
  6 & 23 n^3 - 42 n^2 - 60 n + 360  & 23 n^3  - 72 n^2  + 60 n - 432 & 30n^2-120n+792
  \\
  8 & 23 n^3 - 42 n^2 + 48 n - 256  & 23 n^3  - 72 n^2  - 48 n + 128 & 30n^2+96n-384
  \\
  10 & 23 n^3 - 42 n^2 - 60 n + 232 & 23 n^3  - 72 n^2  + 60 n - 272 & 30n^2-120n+504
  \\
  12 & 23 n^3 - 42 n^2 + 48 n + 288 & 23 n^3  - 72 n^2  - 48 n & 30n^2+96n+288
  \\
  14 & 23 n^3 - 42 n^2 - 60 n + 104  & 23 n^3  - 72 n^2  + 60 n - 304 & 30n^2-120n+408
  \\
  16 & 23 n^3 - 42 n^2 + 48 n - 128 & 23 n^3  - 72 n^2  - 48 n - 128 & 30n^2+96n
  \\
  18 & 23 n^3 - 42 n^2 - 60 n + 360 & 23 n^3  - 72 n^2  + 60 n - 144 & 30n^2-120n+504
  \\
  20 & 23 n^3 - 42 n^2 + 48 n + 32 & 23 n^3  - 72 n^2  - 48 n + 128 & 30n^2+96n-96
  \\
  22 & 23 n^3 - 42 n^2 - 60 n + 232 & 23 n^3  - 72 n^2  + 60 n - 560 & 30n^2-120n+792
  \end{array}
  \]
  \end{center}
  \caption{Dimensions of weight spaces $n$ and $n{+}2$ for $\Lambda^4 V(n)$}
  \label{multiplicitytable}
  }
  {
  \begin{center}
  \[
  \begin{array}{r|rrrrrrrrrrrr}
  & r=0 & 2 & 4 & 6 & 8 & 10 & 12 & 14 & 16 & 18 & 20 & 22
  \\ \hline
  &&&&&&&&&&&&
  \\[-10pt]
  q=0
  &\three 0 &\three 0 &\three 1 &\three 1 &\three 2 &\three 2 &\three 5 &\three 4
  &\three 8 &\three 7 &\three 12 &\three 11
  \\
  1 &\three 17 &\three 15 &\three 23 &\three 21 &\three 29 &\three 27 &\three 37
  &\three 34 &\three 45 &\three 42 &\three 54 &\three 51
  \\
  2 &\three 64 &\three 60 &\three 75 &\three 71 &\three 86 &\three 82 &\three 99
  &\three 94 &\three 112 &\three 107 &\three 126 &\three 121
  \\
  3 &\three 141 &\three 135 &\three 157 &\three 151 &\three 173 &\three 167
  &\three 191 &\three 184 &\three 209 &\three 202 &\three 228 &\three 221
  \\
  4 &\three 248 &\three 240 &\three 269 &\three 261 &\three 290 &\three 282
  &\three 313 &\three 304 &\three 336 &\three 327 &\three 360 &\three 351
  \\
  5 &\three 385 &\three 375 &\three 411 &\three 401 &\three 437 &\three 427
  &\three 465 &\three 454 &\three 493 &\three 482 &\three 522 &\three 511
  \\
  6 &\three 552 &\three 540 &\three 583 &\three 571 &\three 614 &\three 602
  &\three 647 &\three 634 &\three 680 &\three 667 &\three 714 &\three 701
  \\
  7 &\three 749 &\three 735 &\three 785 &\three 771 &\three 821 &\three 807
  &\three 859 &\three 844 &\three 897 &\three 882 &\three 936 &\three 921
  \\
  8 &\three 976 &\three 960 &\three 1017 &\three 1001 &\three 1058 &\three 1042
  &\three 1101 &\three 1084 &\three 1144 &\three 1127 &\three 1188 &\three 1171
  \\
  9 &\three 1233 &\three 1215 &\three 1279 &\three 1261 &\three 1325 &\three 1307
  &\three 1373 &\three 1354 &\three 1421 &\three 1402 &\three 1470 &\three 1451
  \end{array}
  \]
  \end{center}
  \caption{Multiplicities $\mathrm{mult}(n)$ for $n = 24q + r$ with $0 \le q \le 9$
  ($n$ even)}
  \label{multiplicityexample}
  }
  {
  \[
  \begin{array}{rrrrrr}
   4 & [ \quad 4,& 2,&  0,& -2 \quad ] \\
   2 & [ \quad 4,& 2,&  0,& -4 \quad ] \\
   0 & [ \quad 4,& 2,& -2,& -4 \quad ] \\
  -2 & [ \quad 4,& 0,& -2,& -4 \quad ] \\
  -4 & [ \quad 2,& 0,& -2,& -4 \quad ]
  \end{array}
  \qquad\qquad
  V(4)
  \left\{
  \begin{array}{rl}
   4\colon &  [1] \\
   2\colon &  [4] \\
   0\colon &  [6] \\
  -2\colon &  [4] \\
  -4\colon &  [1]
  \end{array}
  \right.
  \]
  \caption{Tensor basis and weight vector basis of $\Lambda^4 V(4)$}
  \label{tensorweightvectorbasistable4}
  }
  {
  \begin{alignat*}{3}
  &[ 4, 2,  0, -2 ] = 12 &\qquad
  &[ 4, 2,  0, -4 ] =  3 &\qquad
  &[ 4, 2, -2, -4 ] =  2 \\
  &[ 4, 0, -2, -4 ] =  3 &\qquad
  &[ 2, 0, -2, -4 ] = 12
  \end{alignat*}
  \caption{Quaternary algebra structure on $V(4)$, integral version}
  \label{structureconstants4}
  }
  \end{table}


  \begin{table} \small
  {
  \[
  \begin{array}{rrrrrrrrrrrrrrrrrrrrr}
  12 &
  [6,&\!\!\!\!\!\! 4,&\!\!\!\!\!\! 2,&\!\!\!\!\!\! 0]
  \\
  10 &
  [6,&\!\!\!\!\!\! 4,&\!\!\!\!\!\! 2,&\!\!\!\!\!\! -2]
  \\
  8 &
  [6,&\!\!\!\!\!\! 4,&\!\!\!\!\!\! 2,&\!\!\!\!\!\! -4]&
  [6,&\!\!\!\!\!\! 4,&\!\!\!\!\!\! 0,&\!\!\!\!\!\! -2]
  \\
  6 &
  [6,&\!\!\!\!\!\! 4,&\!\!\!\!\!\! 2,&\!\!\!\!\!\! -6]&
  [6,&\!\!\!\!\!\! 4,&\!\!\!\!\!\! 0,&\!\!\!\!\!\! -4]&
  [6,&\!\!\!\!\!\! 2,&\!\!\!\!\!\! 0,&\!\!\!\!\!\! -2]
  \\
  4 &
  [6,&\!\!\!\!\!\!4,&\!\!\!\!\!\! 0,&\!\!\!\!\!\! -6]&
  [6,&\!\!\!\!\!\! 4,&\!\!\!\!\!\! -2,&\!\!\!\!\!\! -4]&
  [6,&\!\!\!\!\!\! 2,&\!\!\!\!\!\! 0,&\!\!\!\!\!\! -4]&
  [4,&\!\!\!\!\!\! 2,&\!\!\!\!\!\! 0,&\!\!\!\!\!\!-2]
  \\
  2 &
  [6,&\!\!\!\!\!\! 4,&\!\!\!\!\!\! -2,&\!\!\!\!\!\! -6]&
  [6,&\!\!\!\!\!\! 2,&\!\!\!\!\!\! 0,&\!\!\!\!\!\! -6]&
  [6,&\!\!\!\!\!\! 2,&\!\!\!\!\!\! -2,&\!\!\!\!\!\! -4]&
  [4,&\!\!\!\!\!\! 2,&\!\!\!\!\!\! 0,&\!\!\!\!\!\! -4]
  \\
  0 &
  [6,&\!\!\!\!\!\!4,&\!\!\!\!\!\! -4,&\!\!\!\!\!\! -6]&
  [6,&\!\!\!\!\!\! 2,&\!\!\!\!\!\! -2,&\!\!\!\!\!\!-6]&
  [6,&\!\!\!\!\!\! 0,&\!\!\!\!\!\! -2,&\!\!\!\!\!\! -4]&
  [4,&\!\!\!\!\!\! 2,&\!\!\!\!\!\! 0,&\!\!\!\!\!\! -6]&
  [4,&\!\!\!\!\!\! 2,&\!\!\!\!\!\! -2,&\!\!\!\!\!\! -4]
  \\
  -2 &
  [6,&\!\!\!\!\!\! 2,&\!\!\!\!\!\! -4,&\!\!\!\!\!\! -6]&
  [6,&\!\!\!\!\!\! 0,&\!\!\!\!\!\! -2,&\!\!\!\!\!\! -6]&
  [4,&\!\!\!\!\!\! 2,&\!\!\!\!\!\! -2,&\!\!\!\!\!\! -6]&
  [4,&\!\!\!\!\!\! 0,&\!\!\!\!\!\! -2,&\!\!\!\!\!\!-4]
  \\
  -4 &
  [6,&\!\!\!\!\!\! 0,&\!\!\!\!\!\! -4,&\!\!\!\!\!\! -6]&
  [4,&\!\!\!\!\!\! 2,&\!\!\!\!\!\! -4,&\!\!\!\!\!\! -6]&
  [4,&\!\!\!\!\!\! 0,&\!\!\!\!\!\! -2,&\!\!\!\!\!\! -6]&
  [2,&\!\!\!\!\!\! 0,&\!\!\!\!\!\! -2,&\!\!\!\!\!\! -4]
  \\
  -6 &
  [6,&\!\!\!\!\!\! -2,&\!\!\!\!\!\! -4,&\!\!\!\!\!\! -6]&
  [4,&\!\!\!\!\!\! 0,&\!\!\!\!\!\! -4,&\!\!\!\!\!\! -6]&
  [2,&\!\!\!\!\!\! 0,&\!\!\!\!\!\! -2,&\!\!\!\!\!\! -6]
  \\
  -8 &
  [4,&\!\!\!\!\!\! -2,&\!\!\!\!\!\! -4,&\!\!\!\!\!\!-6]&
  [2,&\!\!\!\!\!\! 0,&\!\!\!\!\!\! -4,&\!\!\!\!\!\!-6]
  \\
  -10 &
  [2,&\!\!\!\!\!\! -2,&\!\!\!\!\!\! -4,&\!\!\!\!\!\! -6]
  \\
  -12 &
  [0,&\!\!\!\!\!\! -2,&\!\!\!\!\!\! -4,&\!\!\!\!\!\! -6]
  \\
  \end{array}
  \]
  \caption{Tensor basis of $\Lambda^4 V(6)$}
  \label{tensorbasistable6}
  }
  {
  \begin{alignat*}{2}
  V(12)
  &\left\{
  \begin{array}{rl}
  12\colon &  [1] \\
  10\colon &  [4] \\
   8\colon &  [10, 6] \\
   6\colon &  [20, 20, 4] \\
   4\colon &  [45, 20, 15, 1] \\
   2\colon &  [60, 36, 20, 4] \\
   0\colon &  [50, 64, 10, 10, 6] \\
  -2\colon &  [60, 36, 20, 4] \\
  -4\colon &  [45, 20, 15, 1] \\
  -6\colon &  [20, 20, 4] \\
  -8\colon &  [10, 6] \\
  -10\colon &  [4] \\
  -12\colon &  [1]
  \end{array}
  \right.
  &\qquad
  V(8)
  &\left\{
  \begin{array}{rl}
   8\colon &  [-2, 1] \\
   6\colon &  [-12, -1, 2] \\
   4\colon &  [-21, -2, 4, 1] \\
   2\colon &  [-32, -6, 4, 3] \\
   0\colon &  [-40, -16, 3, 3, 4] \\
  -2\colon &  [-32, -6, 4, 3] \\
  -4\colon &  [-21, -2, 4, 1] \\
  -6\colon &  [-12, -1, 2] \\
  -8\colon &  [-2, 1]
  \end{array}
  \right.
  \\[4pt]
  V(6)
  &\left\{
  \begin{array}{rl}
   6\colon &  [20, -5, 2] \\
   4\colon &  [30, -20, 0, 2] \\
   2\colon &  [0, 30, -20, 5] \\
   0\colon &  [0, 0, -20, 20, 0] \\
  -2\colon &  [0, -30, 20, -5] \\
  -4\colon &  [-30, 20, 0, -2] \\
  -6\colon &  [-20, 5, -2]
  \end{array}
  \right.
  &\qquad
  V(4)
  &\left\{
  \begin{array}{rl}
   4\colon &  [0, 5, -3, 1] \\
   2\colon &  [30, -18, -2, 2] \\
   0\colon &  [75, -12, -3, -3, 3] \\
  -2\colon &  [30, -18, -2, 2] \\
  -4\colon &  [0, 5, -3, 1]
  \end{array}
  \right.
  \\[4pt]
  V(0)
  &\left\{
  \begin{array}{rl}
   0\colon &  [-15, 6, -3, -3, 1]
  \end{array}
  \right.
  \end{alignat*}
  \caption{Weight vector basis of $\Lambda^4 V(6)$}
  \label{weightvectorbasistable6}
  }
  {
  \[
  \begin{array}{llll}
  {[   6,   4,   2,  -6 ]}  = 3 &
  {[   6,   4,   0,  -4 ]}  = -6 &
  {[   6,   2,   0,  -2 ]}  = 15 &
  {[   6,   4,   0,  -6 ]}  = 1
  \\
  {[   6,   4,  -2,  -4 ]}  = -3 &
  {[   6,   2,   0,  -4 ]}  = 0  &
  {[   4,   2,   0,  -2 ]}  = 15 &
  {[   6,   4,  -2,  -6 ]}  = 0
  \\
  {[   6,   2,   0,  -6 ]}  = 1 &
  {[   6,   2,  -2,  -4 ]}  = -3 &
  {[   4,   2,   0,  -4 ]}  = 6 &
  {[   6,   4,  -4,  -6 ]}  = 0
  \\
  {[   6,   2,  -2,  -6 ]}  = 0 &
  {[   6,   0,  -2,  -4 ]}  = -3 &
  {[   4,   2,   0,  -6 ]}  = 3 &
  {[   4,   2,  -2,  -4 ]}  = 0
  \\
  {[   6,   2,  -4,  -6 ]}  = 0 &
  {[   6,   0,  -2,  -6 ]}  = -1 &
  {[   4,   2,  -2,  -6 ]}  = 3 &
  {[   4,   0,  -2,  -4 ]}  = -6
  \\
  {[   6,   0,  -4,  -6 ]}  = -1 &
  {[   4,   2,  -4,  -6 ]}  = 3 &
  {[   4,   0,  -2,  -6 ]}  = 0 &
  {[   2,   0,  -2,  -4 ]}  = -15
  \\
  {[   6,  -2,  -4,  -6 ]}  = -3 &
  {[   4,   0,  -4,  -6 ]}  = 6 &
  {[   2,   0,  -2,  -6 ]}  = -15
  \end{array}
  \]
  \caption{Quaternary algebra structure on $V(6)$, integral version}
  \label{structureconstants6}
  }
  \end{table}


  \begin{table} \tiny
  \[
  \begin{array}{rrrrrrrrrrrrrrrrrrrrrrrrr}
  20
  &\!\!\!\!
  [   8,&\!\!\!\!\!\!\!\!   6,&\!\!\!\!\!\!\!\!   4,&\!\!\!\!\!\!\!\!   2 ]
  \\
  18
  &\!\!\!\!
  [   8,&\!\!\!\!\!\!\!\!   6,&\!\!\!\!\!\!\!\!   4,&\!\!\!\!\!\!\!\!   0 ]
  \\
  16
  &\!\!\!\!
  [   8,&\!\!\!\!\!\!\!\!   6,&\!\!\!\!\!\!\!\!   4,&\!\!\!\!\!\!\!\!  -2 ] &
  [   8,&\!\!\!\!\!\!\!\!   6,&\!\!\!\!\!\!\!\!   2,&\!\!\!\!\!\!\!\!   0 ]
  \\
  14
  &\!\!\!\!
  [   8,&\!\!\!\!\!\!\!\!   6,&\!\!\!\!\!\!\!\!   4,&\!\!\!\!\!\!\!\!  -4 ] &
  [   8,&\!\!\!\!\!\!\!\!   6,&\!\!\!\!\!\!\!\!   2,&\!\!\!\!\!\!\!\!  -2 ] &
  [   8,&\!\!\!\!\!\!\!\!   4,&\!\!\!\!\!\!\!\!   2,&\!\!\!\!\!\!\!\!   0 ]
  \\
  12
  &\!\!\!\!
  [   8,&\!\!\!\!\!\!\!\!   6,&\!\!\!\!\!\!\!\!   4,&\!\!\!\!\!\!\!\!  -6 ] &
  [   8,&\!\!\!\!\!\!\!\!   6,&\!\!\!\!\!\!\!\!   2,&\!\!\!\!\!\!\!\!  -4 ] &
  [   8,&\!\!\!\!\!\!\!\!   6,&\!\!\!\!\!\!\!\!   0,&\!\!\!\!\!\!\!\!  -2 ] &
  [   8,&\!\!\!\!\!\!\!\!   4,&\!\!\!\!\!\!\!\!   2,&\!\!\!\!\!\!\!\!  -2 ] &
  [   6,&\!\!\!\!\!\!\!\!   4,&\!\!\!\!\!\!\!\!   2,&\!\!\!\!\!\!\!\!   0 ]
  \\
  10
  &\!\!\!\!
  [   8,&\!\!\!\!\!\!\!\!   6,&\!\!\!\!\!\!\!\!   4,&\!\!\!\!\!\!\!\!  -8 ] &
  [   8,&\!\!\!\!\!\!\!\!   6,&\!\!\!\!\!\!\!\!   2,&\!\!\!\!\!\!\!\!  -6 ] &
  [   8,&\!\!\!\!\!\!\!\!   6,&\!\!\!\!\!\!\!\!   0,&\!\!\!\!\!\!\!\!  -4 ] &
  [   8,&\!\!\!\!\!\!\!\!   4,&\!\!\!\!\!\!\!\!   2,&\!\!\!\!\!\!\!\!  -4 ] &
  [   8,&\!\!\!\!\!\!\!\!   4,&\!\!\!\!\!\!\!\!   0,&\!\!\!\!\!\!\!\!  -2 ] &
  [   6,&\!\!\!\!\!\!\!\!   4,&\!\!\!\!\!\!\!\!   2,&\!\!\!\!\!\!\!\!  -2 ]
  \\
  8
  &\!\!\!\!
  [   8,&\!\!\!\!\!\!\!\!   6,&\!\!\!\!\!\!\!\!   2,&\!\!\!\!\!\!\!\!  -8 ] &
  [   8,&\!\!\!\!\!\!\!\!   6,&\!\!\!\!\!\!\!\!   0,&\!\!\!\!\!\!\!\!  -6 ] &
  [   8,&\!\!\!\!\!\!\!\!   6,&\!\!\!\!\!\!\!\!  -2,&\!\!\!\!\!\!\!\!  -4 ] &
  [   8,&\!\!\!\!\!\!\!\!   4,&\!\!\!\!\!\!\!\!   2,&\!\!\!\!\!\!\!\!  -6 ] &
  [   8,&\!\!\!\!\!\!\!\!   4,&\!\!\!\!\!\!\!\!   0,&\!\!\!\!\!\!\!\!  -4 ] &
  [   8,&\!\!\!\!\!\!\!\!   2,&\!\!\!\!\!\!\!\!   0,&\!\!\!\!\!\!\!\!  -2 ] \\
  &\!\!\!\!
  [   6,&\!\!\!\!\!\!\!\!   4,&\!\!\!\!\!\!\!\!   2,&\!\!\!\!\!\!\!\!  -4 ] &
  [   6,&\!\!\!\!\!\!\!\!   4,&\!\!\!\!\!\!\!\!   0,&\!\!\!\!\!\!\!\!  -2 ]
  \\
  6
  &\!\!\!\!
  [   8,&\!\!\!\!\!\!\!\!   6,&\!\!\!\!\!\!\!\!   0,&\!\!\!\!\!\!\!\!  -8 ] &
  [   8,&\!\!\!\!\!\!\!\!   6,&\!\!\!\!\!\!\!\!  -2,&\!\!\!\!\!\!\!\!  -6 ] &
  [   8,&\!\!\!\!\!\!\!\!   4,&\!\!\!\!\!\!\!\!   2,&\!\!\!\!\!\!\!\!  -8 ] &
  [   8,&\!\!\!\!\!\!\!\!   4,&\!\!\!\!\!\!\!\!   0,&\!\!\!\!\!\!\!\!  -6 ] &
  [   8,&\!\!\!\!\!\!\!\!   4,&\!\!\!\!\!\!\!\!  -2,&\!\!\!\!\!\!\!\!  -4 ] &
  [   8,&\!\!\!\!\!\!\!\!   2,&\!\!\!\!\!\!\!\!   0,&\!\!\!\!\!\!\!\!  -4 ] \\
  &\!\!\!\!
  [   6,&\!\!\!\!\!\!\!\!   4,&\!\!\!\!\!\!\!\!   2,&\!\!\!\!\!\!\!\!  -6 ] &
  [   6,&\!\!\!\!\!\!\!\!   4,&\!\!\!\!\!\!\!\!   0,&\!\!\!\!\!\!\!\!  -4 ] &
  [   6,&\!\!\!\!\!\!\!\!   2,&\!\!\!\!\!\!\!\!   0,&\!\!\!\!\!\!\!\!  -2 ]
  \\
  4
  &\!\!\!\!
  [   8,&\!\!\!\!\!\!\!\!   6,&\!\!\!\!\!\!\!\!  -2,&\!\!\!\!\!\!\!\!  -8 ] &
  [   8,&\!\!\!\!\!\!\!\!   6,&\!\!\!\!\!\!\!\!  -4,&\!\!\!\!\!\!\!\!  -6 ] &
  [   8,&\!\!\!\!\!\!\!\!   4,&\!\!\!\!\!\!\!\!   0,&\!\!\!\!\!\!\!\!  -8 ] &
  [   8,&\!\!\!\!\!\!\!\!   4,&\!\!\!\!\!\!\!\!  -2,&\!\!\!\!\!\!\!\!  -6 ] &
  [   8,&\!\!\!\!\!\!\!\!   2,&\!\!\!\!\!\!\!\!   0,&\!\!\!\!\!\!\!\!  -6 ] &
  [   8,&\!\!\!\!\!\!\!\!   2,&\!\!\!\!\!\!\!\!  -2,&\!\!\!\!\!\!\!\!  -4 ] \\
  &\!\!\!\!
  [   6,&\!\!\!\!\!\!\!\!   4,&\!\!\!\!\!\!\!\!   2,&\!\!\!\!\!\!\!\!  -8 ] &
  [   6,&\!\!\!\!\!\!\!\!   4,&\!\!\!\!\!\!\!\!   0,&\!\!\!\!\!\!\!\!  -6 ] &
  [   6,&\!\!\!\!\!\!\!\!   4,&\!\!\!\!\!\!\!\!  -2,&\!\!\!\!\!\!\!\!  -4 ] &
  [   6,&\!\!\!\!\!\!\!\!   2,&\!\!\!\!\!\!\!\!   0,&\!\!\!\!\!\!\!\!  -4 ] &
  [   4,&\!\!\!\!\!\!\!\!   2,&\!\!\!\!\!\!\!\!   0,&\!\!\!\!\!\!\!\!  -2 ]
  \\
  2
  &\!\!\!\!
  [   8,&\!\!\!\!\!\!\!\!   6,&\!\!\!\!\!\!\!\!  -4,&\!\!\!\!\!\!\!\!  -8 ] &
  [   8,&\!\!\!\!\!\!\!\!   4,&\!\!\!\!\!\!\!\!  -2,&\!\!\!\!\!\!\!\!  -8 ] &
  [   8,&\!\!\!\!\!\!\!\!   4,&\!\!\!\!\!\!\!\!  -4,&\!\!\!\!\!\!\!\!  -6 ] &
  [   8,&\!\!\!\!\!\!\!\!   2,&\!\!\!\!\!\!\!\!   0,&\!\!\!\!\!\!\!\!  -8 ] &
  [   8,&\!\!\!\!\!\!\!\!   2,&\!\!\!\!\!\!\!\!  -2,&\!\!\!\!\!\!\!\!  -6 ] &
  [   8,&\!\!\!\!\!\!\!\!   0,&\!\!\!\!\!\!\!\!  -2,&\!\!\!\!\!\!\!\!  -4 ] \\
  &\!\!\!\!
  [   6,&\!\!\!\!\!\!\!\!   4,&\!\!\!\!\!\!\!\!   0,&\!\!\!\!\!\!\!\!  -8 ] &
  [   6,&\!\!\!\!\!\!\!\!   4,&\!\!\!\!\!\!\!\!  -2,&\!\!\!\!\!\!\!\!  -6 ] &
  [   6,&\!\!\!\!\!\!\!\!   2,&\!\!\!\!\!\!\!\!   0,&\!\!\!\!\!\!\!\!  -6 ] &
  [   6,&\!\!\!\!\!\!\!\!   2,&\!\!\!\!\!\!\!\!  -2,&\!\!\!\!\!\!\!\!  -4 ] &
  [   4,&\!\!\!\!\!\!\!\!   2,&\!\!\!\!\!\!\!\!   0,&\!\!\!\!\!\!\!\!  -4 ]
  \\
  0
  &\!\!\!\!
  [   8,&\!\!\!\!\!\!\!\!   6,&\!\!\!\!\!\!\!\!  -6,&\!\!\!\!\!\!\!\!  -8 ] &
  [   8,&\!\!\!\!\!\!\!\!   4,&\!\!\!\!\!\!\!\!  -4,&\!\!\!\!\!\!\!\!  -8 ] &
  [   8,&\!\!\!\!\!\!\!\!   2,&\!\!\!\!\!\!\!\!  -2,&\!\!\!\!\!\!\!\!  -8 ] &
  [   8,&\!\!\!\!\!\!\!\!   2,&\!\!\!\!\!\!\!\!  -4,&\!\!\!\!\!\!\!\!  -6 ] &
  [   8,&\!\!\!\!\!\!\!\!   0,&\!\!\!\!\!\!\!\!  -2,&\!\!\!\!\!\!\!\!  -6 ] &
  [   6,&\!\!\!\!\!\!\!\!   4,&\!\!\!\!\!\!\!\!  -2,&\!\!\!\!\!\!\!\!  -8 ] \\
  &\!\!\!\!
  [   6,&\!\!\!\!\!\!\!\!   4,&\!\!\!\!\!\!\!\!  -4,&\!\!\!\!\!\!\!\!  -6 ] &
  [   6,&\!\!\!\!\!\!\!\!   2,&\!\!\!\!\!\!\!\!   0,&\!\!\!\!\!\!\!\!  -8 ] &
  [   6,&\!\!\!\!\!\!\!\!   2,&\!\!\!\!\!\!\!\!  -2,&\!\!\!\!\!\!\!\!  -6 ] &
  [   6,&\!\!\!\!\!\!\!\!   0,&\!\!\!\!\!\!\!\!  -2,&\!\!\!\!\!\!\!\!  -4 ] &
  [   4,&\!\!\!\!\!\!\!\!   2,&\!\!\!\!\!\!\!\!   0,&\!\!\!\!\!\!\!\!  -6 ] &
  [   4,&\!\!\!\!\!\!\!\!   2,&\!\!\!\!\!\!\!\!  -2,&\!\!\!\!\!\!\!\!  -4 ]
  \\
  -2
  &\!\!\!\!
  [   8,&\!\!\!\!\!\!\!\!   4,&\!\!\!\!\!\!\!\!  -6,&\!\!\!\!\!\!\!\!  -8 ] &
  [   8,&\!\!\!\!\!\!\!\!   2,&\!\!\!\!\!\!\!\!  -4,&\!\!\!\!\!\!\!\!  -8 ] &
  [   8,&\!\!\!\!\!\!\!\!   0,&\!\!\!\!\!\!\!\!  -2,&\!\!\!\!\!\!\!\!  -8 ] &
  [   8,&\!\!\!\!\!\!\!\!   0,&\!\!\!\!\!\!\!\!  -4,&\!\!\!\!\!\!\!\!  -6 ] &
  [   6,&\!\!\!\!\!\!\!\!   4,&\!\!\!\!\!\!\!\!  -4,&\!\!\!\!\!\!\!\!  -8 ] &
  [   6,&\!\!\!\!\!\!\!\!   2,&\!\!\!\!\!\!\!\!  -2,&\!\!\!\!\!\!\!\!  -8 ] \\
  &\!\!\!\!
  [   6,&\!\!\!\!\!\!\!\!   2,&\!\!\!\!\!\!\!\!  -4,&\!\!\!\!\!\!\!\!  -6 ] &
  [   6,&\!\!\!\!\!\!\!\!   0,&\!\!\!\!\!\!\!\!  -2,&\!\!\!\!\!\!\!\!  -6 ] &
  [   4,&\!\!\!\!\!\!\!\!   2,&\!\!\!\!\!\!\!\!   0,&\!\!\!\!\!\!\!\!  -8 ] &
  [   4,&\!\!\!\!\!\!\!\!   2,&\!\!\!\!\!\!\!\!  -2,&\!\!\!\!\!\!\!\!  -6 ] &
  [   4,&\!\!\!\!\!\!\!\!   0,&\!\!\!\!\!\!\!\!  -2,&\!\!\!\!\!\!\!\!  -4 ]
  \\
  -4
  &\!\!\!\!
  [   8,&\!\!\!\!\!\!\!\!   2,&\!\!\!\!\!\!\!\!  -6,&\!\!\!\!\!\!\!\!  -8 ] &
  [   8,&\!\!\!\!\!\!\!\!   0,&\!\!\!\!\!\!\!\!  -4,&\!\!\!\!\!\!\!\!  -8 ] &
  [   8,&\!\!\!\!\!\!\!\!  -2,&\!\!\!\!\!\!\!\!  -4,&\!\!\!\!\!\!\!\!  -6 ] &
  [   6,&\!\!\!\!\!\!\!\!   4,&\!\!\!\!\!\!\!\!  -6,&\!\!\!\!\!\!\!\!  -8 ] &
  [   6,&\!\!\!\!\!\!\!\!   2,&\!\!\!\!\!\!\!\!  -4,&\!\!\!\!\!\!\!\!  -8 ] &
  [   6,&\!\!\!\!\!\!\!\!   0,&\!\!\!\!\!\!\!\!  -2,&\!\!\!\!\!\!\!\!  -8 ] \\
  &\!\!\!\!
  [   6,&\!\!\!\!\!\!\!\!   0,&\!\!\!\!\!\!\!\!  -4,&\!\!\!\!\!\!\!\!  -6 ] &
  [   4,&\!\!\!\!\!\!\!\!   2,&\!\!\!\!\!\!\!\!  -2,&\!\!\!\!\!\!\!\!  -8 ] &
  [   4,&\!\!\!\!\!\!\!\!   2,&\!\!\!\!\!\!\!\!  -4,&\!\!\!\!\!\!\!\!  -6 ] &
  [   4,&\!\!\!\!\!\!\!\!   0,&\!\!\!\!\!\!\!\!  -2,&\!\!\!\!\!\!\!\!  -6 ] &
  [   2,&\!\!\!\!\!\!\!\!   0,&\!\!\!\!\!\!\!\!  -2,&\!\!\!\!\!\!\!\!  -4 ]
  \\
  -6
  &\!\!\!\!
  [   8,&\!\!\!\!\!\!\!\!   0,&\!\!\!\!\!\!\!\!  -6,&\!\!\!\!\!\!\!\!  -8 ] &
  [   8,&\!\!\!\!\!\!\!\!  -2,&\!\!\!\!\!\!\!\!  -4,&\!\!\!\!\!\!\!\!  -8 ] &
  [   6,&\!\!\!\!\!\!\!\!   2,&\!\!\!\!\!\!\!\!  -6,&\!\!\!\!\!\!\!\!  -8 ] &
  [   6,&\!\!\!\!\!\!\!\!   0,&\!\!\!\!\!\!\!\!  -4,&\!\!\!\!\!\!\!\!  -8 ] &
  [   6,&\!\!\!\!\!\!\!\!  -2,&\!\!\!\!\!\!\!\!  -4,&\!\!\!\!\!\!\!\!  -6 ] &
  [   4,&\!\!\!\!\!\!\!\!   2,&\!\!\!\!\!\!\!\!  -4,&\!\!\!\!\!\!\!\!  -8 ] \\
  &\!\!\!\!
  [   4,&\!\!\!\!\!\!\!\!   0,&\!\!\!\!\!\!\!\!  -2,&\!\!\!\!\!\!\!\!  -8 ] &
  [   4,&\!\!\!\!\!\!\!\!   0,&\!\!\!\!\!\!\!\!  -4,&\!\!\!\!\!\!\!\!  -6 ] &
  [   2,&\!\!\!\!\!\!\!\!   0,&\!\!\!\!\!\!\!\!  -2,&\!\!\!\!\!\!\!\!  -6 ]
  \\
  -8
  &\!\!\!\!
  [   8,&\!\!\!\!\!\!\!\!  -2,&\!\!\!\!\!\!\!\!  -6,&\!\!\!\!\!\!\!\!  -8 ] &
  [   6,&\!\!\!\!\!\!\!\!   0,&\!\!\!\!\!\!\!\!  -6,&\!\!\!\!\!\!\!\!  -8 ] &
  [   6,&\!\!\!\!\!\!\!\!  -2,&\!\!\!\!\!\!\!\!  -4,&\!\!\!\!\!\!\!\!  -8 ] &
  [   4,&\!\!\!\!\!\!\!\!   2,&\!\!\!\!\!\!\!\!  -6,&\!\!\!\!\!\!\!\!  -8 ] &
  [   4,&\!\!\!\!\!\!\!\!   0,&\!\!\!\!\!\!\!\!  -4,&\!\!\!\!\!\!\!\!  -8 ] &
  [   4,&\!\!\!\!\!\!\!\!  -2,&\!\!\!\!\!\!\!\!  -4,&\!\!\!\!\!\!\!\!  -6 ] \\
  &\!\!\!\!
  [   2,&\!\!\!\!\!\!\!\!   0,&\!\!\!\!\!\!\!\!  -2,&\!\!\!\!\!\!\!\!  -8 ] &
  [   2,&\!\!\!\!\!\!\!\!   0,&\!\!\!\!\!\!\!\!  -4,&\!\!\!\!\!\!\!\!  -6 ]
  \\
  -10
  &\!\!\!\!
  [   8,&\!\!\!\!\!\!\!\!  -4,&\!\!\!\!\!\!\!\!  -6,&\!\!\!\!\!\!\!\!  -8 ] &
  [   6,&\!\!\!\!\!\!\!\!  -2,&\!\!\!\!\!\!\!\!  -6,&\!\!\!\!\!\!\!\!  -8 ] &
  [   4,&\!\!\!\!\!\!\!\!   0,&\!\!\!\!\!\!\!\!  -6,&\!\!\!\!\!\!\!\!  -8 ] &
  [   4,&\!\!\!\!\!\!\!\!  -2,&\!\!\!\!\!\!\!\!  -4,&\!\!\!\!\!\!\!\!  -8 ] &
  [   2,&\!\!\!\!\!\!\!\!   0,&\!\!\!\!\!\!\!\!  -4,&\!\!\!\!\!\!\!\!  -8 ] &
  [   2,&\!\!\!\!\!\!\!\!  -2,&\!\!\!\!\!\!\!\!  -4,&\!\!\!\!\!\!\!\!  -6 ]
  \\
  -12
  &\!\!\!\!
  [   6,&\!\!\!\!\!\!\!\!  -4,&\!\!\!\!\!\!\!\!  -6,&\!\!\!\!\!\!\!\!  -8 ] &
  [   4,&\!\!\!\!\!\!\!\!  -2,&\!\!\!\!\!\!\!\!  -6,&\!\!\!\!\!\!\!\!  -8 ] &
  [   2,&\!\!\!\!\!\!\!\!   0,&\!\!\!\!\!\!\!\!  -6,&\!\!\!\!\!\!\!\!  -8 ] &
  [   2,&\!\!\!\!\!\!\!\!  -2,&\!\!\!\!\!\!\!\!  -4,&\!\!\!\!\!\!\!\!  -8 ] &
  [   0,&\!\!\!\!\!\!\!\!  -2,&\!\!\!\!\!\!\!\!  -4,&\!\!\!\!\!\!\!\!  -6 ]
  \\
  -14
  &\!\!\!\!
  [   4,&\!\!\!\!\!\!\!\!  -4,&\!\!\!\!\!\!\!\!  -6,&\!\!\!\!\!\!\!\!  -8 ] &
  [   2,&\!\!\!\!\!\!\!\!  -2,&\!\!\!\!\!\!\!\!  -6,&\!\!\!\!\!\!\!\!  -8 ] &
  [   0,&\!\!\!\!\!\!\!\!  -2,&\!\!\!\!\!\!\!\!  -4,&\!\!\!\!\!\!\!\!  -8 ]
  \\
  -16
  &\!\!\!\!
  [   2,&\!\!\!\!\!\!\!\!  -4,&\!\!\!\!\!\!\!\!  -6,&\!\!\!\!\!\!\!\!  -8 ] &
  [   0,&\!\!\!\!\!\!\!\!  -2,&\!\!\!\!\!\!\!\!  -6,&\!\!\!\!\!\!\!\!  -8 ]
  \\
  -18
  &\!\!\!\!
  [   0,&\!\!\!\!\!\!\!\!  -4,&\!\!\!\!\!\!\!\!  -6,&\!\!\!\!\!\!\!\!  -8 ]
  \\
  -20
  &\!\!\!\!
  [  -2,&\!\!\!\!\!\!\!\!  -4,&\!\!\!\!\!\!\!\!  -6,&\!\!\!\!\!\!\!\!  -8 ]
  \end{array}
  \]
  \caption{Tensor basis of $\Lambda^4 V(8)$}
  \label{tensorbasistable8}
  \end{table}

  \begin{table} \tiny
  \begin{align*}
  V(20)
  &\left\{
  \begin{array}{rl}
  20\colon & [1] \\
  18\colon & [4] \\
  16\colon & [10,6] \\
  14\colon & [20,20,4] \\
  12\colon & [35,45,20,15,1] \\
  10\colon & [56,84,60,36,20,4] \\
  8\colon & [140,126,50,70,64,10,10,6] \\
  6\colon & [224,140,120,140,60,36,20,20,4] \\
  4\colon & [280,105,256,175,84,45,35,45,20,15,1] \\
  2\colon & [280,360,140,160,140,20,84,60,36,20,4] \\
  0\colon & [196,384,300,126,70,126,50,70,64,10,10,6] \\
  -2\colon & [280,360,160,84,140,140,60,36,20,20,4] \\
  -4\colon & [280,256,35,105,175,84,45,45,20,15,1] \\
  -6\colon & [224,120,140,140,20,60,36,20,4] \\
  -8\colon & [140,126,70,50,64,10,10,6] \\
  -10\colon & [56,84,60,36,20,4] \\
  -12\colon & [35,45,20,15,1] \\
  -14\colon & [20,20,4] \\
  -16\colon & [10,6] \\
  -18\colon & [4] \\
  -20\colon & [1]
  \end{array}
  \right.
  \\[2pt]
  V(16)
  &\left\{
  \begin{array}{rl}
  16\colon & [-3,2] \\
  14\colon & [-18,1,4] \\
  12\colon & [-63,-24,2,11,2] \\
  10\colon & [-168,-119,-28,6,16,7] \\
  8\colon & [-364,-168,-35,-49,16,12,12,11] \\
  6\colon & [-560,-217,-224,-84,2,24,7,26,9] \\
  4\colon & [-756,-217,-448,-140,-14,21,-28,21,22,26,3] \\
  2\colon & [-896,-696,-182,-208,-49,12,-56,17,33,31,10] \\
  0\colon & [-784,-928,-440,-105,-14,-105,-10,-14,48,17,17,14] \\
  -2\colon & [-896,-696,-208,-56,-182,-49,17,33,12,31,10] \\
  -4\colon & [-756,-448,-28,-217,-140,-14,21,21,22,26,3] \\
  -6\colon & [-560,-224,-217,-84,7,2,24,26,9] \\
  -8\colon & [-364,-168,-49,-35,16,12,12,11] \\
  -10\colon & [-168,-119,-28,6,16,7] \\
  -12\colon & [-63,-24,2,11,2] \\
  -14\colon & [-18,1,4] \\
  -16\colon & [-3,2]
  \end{array}
  \right.
  \end{align*}
  \caption{Weight vector basis of $\Lambda^4 V(8)$: part one}
  \label{weightvectorbasistable8part1}
  \end{table}

  \begin{table} \tiny
  \begin{align*}
  V(14)
  &\left\{
  \begin{array}{rl}
  14\colon & [14,-7,4] \\
  12\colon & [98,0,-28,6,4] \\
  10\colon & [392,147,-84,18,-16,13] \\
  8\colon & [784,0,-140,140,-64,-16,32,12] \\
  6\colon & [784,-245,672,28,-150,-72,91,34,5] \\
  4\colon & [392,-294,896,-280,-84,-162,280,126,4,12,2] \\
  2\colon & [0,504,-378,336,-399,-108,504,63,63,-15,6] \\
  0\colon & [0,0,0,-504,-336,504,0,336,0,-24,24,0] \\
  -2\colon & [0,-504,-336,-504,378,399,-63,-63,108,15,-6] \\
  -4\colon & [-392,-896,-280,294,280,84,-126,162,-4,-12,-2] \\
  -6\colon & [-784,-672,245,-28,-91,150,72,-34,-5] \\
  -8\colon & [-784,0,-140,140,64,-32,16,-12] \\
  -10\colon & [-392,-147,84,-18,16,-13] \\
  -12\colon & [-98,0,28,-6,-4] \\
  -14\colon & [-14,7,-4]
  \end{array}
  \right.
  \\[2pt]
  V(12)'
  &\left\{
  \begin{array}{rl}
  12\colon & [15,-5,3,0,0] \\
  10\colon & [120,10,-2,-10,6,0] \\
  8\colon & [220,13,-5,-25,-4,9,-5,3] \\
  6\colon & [328,10,80,-34,-10,14,-20,-2,6] \\
  4\colon & [430,15,176,-55,-1,10,-20,-32,-5,11,3] \\
  2\colon & [540,260,-60,104,-20,8,-32,-50,-4,10,8] \\
  0\colon & [630,360,190,-50,-4,-50,-60,-4,-20,8,8,10] \\
  -2\colon & [540,260,104,-32,-60,-20,-50,-4,8,10,8] \\
  -4\colon & [430,176,-20,15,-55,-1,-32,10,-5,11,3] \\
  -6\colon & [328,80,10,-34,-20,-10,14,-2,6] \\
  -8\colon & [220,13,-25,-5,-4,-5,9,3] \\
  -10\colon & [120,10,-2,-10,6,0] \\
  -12\colon & [15,-5,3,0,0]
  \end{array}
  \right.
  \\[2pt]
  V(12)''
  &\left\{
  \begin{array}{rl}
  12\colon & [-42,14,0,-6,3] \\
  10\colon & [-336,-28,56,-8,-24,9] \\
  8\colon & [-616,140,140,-56,-32,-36,23,6] \\
  6\colon & [-448,560,-560,-56,40,-104,35,32,-6] \\
  4\colon & [560,840,-896,280,-224,-100,-70,77,50,-11,-3] \\
  2\colon & [2016,-224,672,-896,-196,-80,-112,203,-14,-1,-8] \\
  0\colon & [2352,1344,-1120,140,-224,140,315,-224,56,-14,-14,-7] \\
  -2\colon & [2016,-224,-896,-112,672,-196,203,-14,-80,-1,-8] \\
  -4\colon & [560,-896,-70,840,280,-224,77,-100,50,-11,-3] \\
  -6\colon & [-448,-560,560,-56,35,40,-104,32,-6] \\
  -8\colon & [-616,140,-56,140,-32,23,-36,6] \\
  -10\colon & [-336,-28,56,-8,-24,9] \\
  -12\colon & [-42,14,0,-6,3]
  \end{array}
  \right.
  \\[2pt]
  V(10)
  &\left\{
  \begin{array}{rl}
  10\colon & [-210,35,-7,-5,3,0] \\
  8\colon & [-350,91,-35,35,-16,9,-5,3] \\
  6\colon & [-336,105,-210,105,-75,3,0,-9,9] \\
  4\colon & [-280,210,-224,70,112,-70,-70,14,-40,10,6] \\
  2\colon & [0,-280,210,56,70,-70,-98,-35,56,-35,14] \\
  0\colon & [0,0,0,210,-42,-210,0,42,0,-42,42,0] \\
  -2\colon & [0,280,-56,98,-210,-70,35,-56,70,35,-14] \\
  -4\colon & [280,224,70,-210,-70,-112,-14,70,40,-10,-6] \\
  -6\colon & [336,210,-105,-105,0,75,-3,9,-9] \\
  -8\colon & [350,-91,-35,35,16,5,-9,-3] \\
  -10\colon & [210,-35,7,5,-3,0]
  \end{array}
  \right.
  \\[2pt]
  V(8)'
  &\left\{
  \begin{array}{rl}
  8\colon & [0,-7,7,5,-2,1,0,0] \\
  6\colon & [-56,14,40,-8,4,0,5,-2,1] \\
  4\colon & [-84,42,-8,8,-12,6,40,-1,-3,0,1] \\
  2\colon & [-56,-48,44,-40,2,8,48,-6,-5,-1,2] \\
  0\colon & [-98,-12,-82,36,16,36,2,16,-12,1,1,2] \\
  -2\colon & [-56,-48,-40,48,44,2,-6,-5,8,-1,2] \\
  -4\colon & [-84,-8,40,42,8,-12,-1,6,-3,0,1] \\
  -6\colon & [-56,40,14,-8,5,4,0,-2,1] \\
  -8\colon & [0,-7,5,7,-2,0,1,0]
  \end{array}
  \right.
  \\[2pt]
  V(8)''
  &\left\{
  \begin{array}{rl}
  8\colon & [0,-56,42,40,-8,0,-5,3] \\
  6\colon & [-448,14,320,-8,44,-24,5,-10,9] \\
  4\colon & [-1064,42,160,148,-96,6,180,-29,-3,0,9] \\
  2\colon & [-2016,-48,324,-96,2,8,216,-6,-61,-1,18] \\
  0\colon & [-3528,-432,-152,246,16,246,72,16,-82,1,1,22] \\
  -2\colon & [-2016,-48,-96,216,324,2,-6,-61,8,-1,18] \\
  -4\colon & [-1064,160,180,42,148,-96,-29,6,-3,0,9] \\
  -6\colon & [-448,320,14,-8,5,44,-24,-10,9] \\
  -8\colon & [0,-56,40,42,-8,-5,0,3]
  \end{array}
  \right.
  \end{align*}
  \caption{Weight vector basis of $\Lambda^4 V(8)$: part two}
  \label{weightvectorbasistable8part2}
  \end{table}

  \begin{table} \tiny
  \begin{align*}
  V(6)
  &\left\{
  \begin{array}{rl}
  6\colon & [224,-70,-160,8,20,-8,15,-6,3] \\
  4\colon & [560,-420,-128,40,-32,20,-40,26,-10,-8,6] \\
  2\colon & [0,400,-300,-320,50,40,-40,50,-5,-25,10] \\
  0\colon & [0,0,0,-200,160,200,0,-160,0,-20,20,0] \\
  -2\colon & [0,-400,320,40,300,-50,-50,5,-40,25,-10] \\
  -4\colon & [-560,128,40,420,-40,32,-26,-20,10,8,-6] \\
  -6\colon & [-224,160,70,-8,-15,-20,8,6,-3]
  \end{array}
  \right.
  \\[2pt]
  V(4)'
  &\left\{
  \begin{array}{rl}
  4\colon & [28,-14,-16,2,4,-2,10,-2,1,0,0] \\
  2\colon & [56,-8,-16,-16,12,-8,8,-1,-2,1,0] \\
  0\colon & [196,-32,-4,12,-4,12,-11,-4,3,-2,-2,1] \\
  -2\colon & [56,-8,-16,8,-16,12,-1,-2,-8,1,0] \\
  -4\colon & [28,-16,10,-14,2,4,-2,-2,1,0,0]
  \end{array}
  \right.
  \\[2pt]
  V(4)''
  &\left\{
  \begin{array}{rl}
  4\colon & [-294,245,168,-63,-14,21,-105,21,0,-7,3] \\
  2\colon & [196,-252,112,392,-112,84,-84,42,0,-14,4] \\
  0\colon & [686,-112,154,-168,70,-168,182,70,-42,14,14,-4] \\
  -2\colon & [196,-252,392,-84,112,-112,42,0,84,-14,4] \\
  -4\colon & [-294,168,-105,245,-63,-14,21,21,0,-7,3]
  \end{array}
  \right.
  \\[2pt]
  V(0)
  &\left\{
  \begin{array}{rl}
  0\colon & [-448,128,-32,-24,16,-24,2,16,3,-4,-4,2]
  \end{array}
  \right.
  \end{align*}
  \caption{Weight vector basis of $\Lambda^4 V(8)$: part three}
  \label{weightvectorbasistable8part3}
  \end{table}


  \begin{table} \tiny
  \begin{center}
  \[
  \begin{array}{llll}
  {[  8,  6,  2, -8 ]}  =       0    &\quad
  {[  8,  6,  0, -6 ]}  =    -168    &\quad
  {[  8,  6, -2, -4 ]}  =    1904    &\quad
  {[  8,  4,  2, -6 ]}  =     336    \\
  {[  8,  4,  0, -4 ]}  =   -4172    &\quad
  {[  8,  2,  0, -2 ]}  =   15512    &\quad
  {[  6,  4,  2, -4 ]}  =   14336    &\quad
  {[  6,  4,  0, -2 ]}  =  -21504    \\
  {[  8,  6,  0, -8 ]}  =     -21    &\quad
  {[  8,  6, -2, -6 ]}  =     392    &\quad
  {[  8,  4,  2, -8 ]}  =      42    &\quad
  {[  8,  4,  0, -6 ]}  =    -980    \\
  {[  8,  4, -2, -4 ]}  =    -420    &\quad
  {[  8,  2,  0, -4 ]}  =    2688    &\quad
  {[  6,  4,  2, -6 ]}  =    3920    &\quad
  {[  6,  4,  0, -4 ]}  =   -3276    \\
  {[  6,  2,  0, -2 ]}  =    -616    &\quad
  {[  8,  6, -2, -8 ]}  =      44    &\quad
  {[  8,  6, -4, -6 ]}  =     168    &\quad
  {[  8,  4,  0, -8 ]}  =    -131    \\
  {[  8,  4, -2, -6 ]}  =    -288    &\quad
  {[  8,  2,  0, -6 ]}  =     -72    &\quad
  {[  8,  2, -2, -4 ]}  =    1176    &\quad
  {[  6,  4,  2, -8 ]}  =     608    \\
  {[  6,  4,  0, -6 ]}  =     744    &\quad
  {[  6,  4, -2, -4 ]}  =   -2352    &\quad
  {[  6,  2,  0, -4 ]}  =       0    &\quad
  {[  4,  2,  0, -2 ]}  =    -616    \\
  {[  8,  6, -4, -8 ]}  =      50    &\quad
  {[  8,  4, -2, -8 ]}  =     -84    &\quad
  {[  8,  4, -4, -6 ]}  =      52    &\quad
  {[  8,  2,  0, -8 ]}  =    -143    \\
  {[  8,  2, -2, -6 ]}  =      56    &\quad
  {[  8,  0, -2, -4 ]}  =     980    &\quad
  {[  6,  4,  0, -8 ]}  =     456    &\quad
  {[  6,  4, -2, -6 ]}  =    -672    \\
  {[  6,  2,  0, -6 ]}  =     648    &\quad
  {[  6,  2, -2, -4 ]}  =    -784    &\quad
  {[  4,  2,  0, -4 ]}  =    -308    &\quad
  {[  8,  6, -6, -8 ]}  =      20    \\
  {[  8,  4, -4, -8 ]}  =      30    &\quad
  {[  8,  2, -2, -8 ]}  =    -204    &\quad
  {[  8,  2, -4, -6 ]}  =      96    &\quad
  {[  8,  0, -2, -6 ]}  =     448    \\
  {[  6,  4, -2, -8 ]}  =      96    &\quad
  {[  6,  4, -4, -6 ]}  =    -320    &\quad
  {[  6,  2,  0, -8 ]}  =     448    &\quad
  {[  6,  2, -2, -6 ]}  =    -512    \\
  {[  6,  0, -2, -4 ]}  =     784    &\quad
  {[  4,  2,  0, -6 ]}  =     784    &\quad
  {[  4,  2, -2, -4 ]}  =   -1344    &\quad
  {[  8,  4, -6, -8 ]}  =      50    \\
  {[  8,  2, -4, -8 ]}  =     -84    &\quad
  {[  8,  0, -2, -8 ]}  =    -143    &\quad
  {[  8,  0, -4, -6 ]}  =     456    &\quad
  {[  6,  4, -4, -8 ]}  =      52    \\
  {[  6,  2, -2, -8 ]}  =      56    &\quad
  {[  6,  2, -4, -6 ]}  =    -672    &\quad
  {[  6,  0, -2, -6 ]}  =     648    &\quad
  {[  4,  2,  0, -8 ]}  =     980    \\
  {[  4,  2, -2, -6 ]}  =    -784    &\quad
  {[  4,  0, -2, -4 ]}  =    -308    &\quad
  {[  8,  2, -6, -8 ]}  =      44    &\quad
  {[  8,  0, -4, -8 ]}  =    -131    \\
  {[  8, -2, -4, -6 ]}  =     608    &\quad
  {[  6,  4, -6, -8 ]}  =     168    &\quad
  {[  6,  2, -4, -8 ]}  =    -288    &\quad
  {[  6,  0, -2, -8 ]}  =     -72    \\
  {[  6,  0, -4, -6 ]}  =     744    &\quad
  {[  4,  2, -2, -8 ]}  =    1176    &\quad
  {[  4,  2, -4, -6 ]}  =   -2352    &\quad
  {[  4,  0, -2, -6 ]}  =       0    \\
  {[  2,  0, -2, -4 ]}  =    -616    &\quad
  {[  8,  0, -6, -8 ]}  =     -21    &\quad
  {[  8, -2, -4, -8 ]}  =      42    &\quad
  {[  6,  2, -6, -8 ]}  =     392    \\
  {[  6,  0, -4, -8 ]}  =    -980    &\quad
  {[  6, -2, -4, -6 ]}  =    3920    &\quad
  {[  4,  2, -4, -8 ]}  =    -420    &\quad
  {[  4,  0, -2, -8 ]}  =    2688    \\
  {[  4,  0, -4, -6 ]}  =   -3276    &\quad
  {[  2,  0, -2, -6 ]}  =    -616    &\quad
  {[  8, -2, -6, -8 ]}  =       0    &\quad
  {[  6,  0, -6, -8 ]}  =    -168    \\
  {[  6, -2, -4, -8 ]}  =     336    &\quad
  {[  4,  2, -6, -8 ]}  =    1904    &\quad
  {[  4,  0, -4, -8 ]}  =   -4172    &\quad
  {[  4, -2, -4, -6 ]}  =   14336    \\
  {[  2,  0, -2, -8 ]}  =   15512    &\quad
  {[  2,  0, -4, -6 ]}  =  -21504    &\quad
  \end{array}
  \]
  \end{center}
  \caption{First quaternary algebra structure $f$ on $V(8)$}
  \label{structureconstants8part1}
  \begin{center}
  \[
  \begin{array}{llll}
  {[  8,  6,  2, -8 ]}  =       0    &\quad
  {[  8,  6,  0, -6 ]}  =    -112    &\quad
  {[  8,  6, -2, -4 ]}  =      56    &\quad
  {[  8,  4,  2, -6 ]}  =     224    \\
  {[  8,  4,  0, -4 ]}  =     252    &\quad
  {[  8,  2,  0, -2 ]}  =   -1792    &\quad
  {[  6,  4,  2, -4 ]}  =   -2576    &\quad
  {[  6,  4,  0, -2 ]}  =    3864    \\
  {[  8,  6,  0, -8 ]}  =     -14    &\quad
  {[  8,  6, -2, -6 ]}  =     -42    &\quad
  {[  8,  4,  2, -8 ]}  =      28    &\quad
  {[  8,  4,  0, -6 ]}  =     105    \\
  {[  8,  4, -2, -4 ]}  =     175    &\quad
  {[  8,  2,  0, -4 ]}  =    -483    &\quad
  {[  6,  4,  2, -6 ]}  =    -420    &\quad
  {[  6,  4,  0, -4 ]}  =      91    \\
  {[  6,  2,  0, -2 ]}  =    1106    &\quad
  {[  8,  6, -2, -8 ]}  =     -14    &\quad
  {[  8,  6, -4, -6 ]}  =     -18    &\quad
  {[  8,  4,  0, -8 ]}  =      21    \\
  {[  8,  4, -2, -6 ]}  =      68    &\quad
  {[  8,  2,  0, -6 ]}  =     -48    &\quad
  {[  8,  2, -2, -4 ]}  =    -126    &\quad
  {[  6,  4,  2, -8 ]}  =     -28    \\
  {[  6,  4,  0, -6 ]}  =    -154    &\quad
  {[  6,  4, -2, -4 ]}  =     252    &\quad
  {[  6,  2,  0, -4 ]}  =       0    &\quad
  {[  4,  2,  0, -2 ]}  =    1106    \\
  {[  8,  6, -4, -8 ]}  =     -10    &\quad
  {[  8,  4, -2, -8 ]}  =       9    &\quad
  {[  8,  4, -4, -6 ]}  =      13    &\quad
  {[  8,  2,  0, -8 ]}  =      13    \\
  {[  8,  2, -2, -6 ]}  =      -6    &\quad
  {[  8,  0, -2, -4 ]}  =    -105    &\quad
  {[  6,  4,  0, -8 ]}  =     -21    &\quad
  {[  6,  4, -2, -6 ]}  =      72    \\
  {[  6,  2,  0, -6 ]}  =    -218    &\quad
  {[  6,  2, -2, -4 ]}  =      84    &\quad
  {[  4,  2,  0, -4 ]}  =     553    &\quad
  {[  8,  6, -6, -8 ]}  =      -4    \\
  {[  8,  4, -4, -8 ]}  =      -6    &\quad
  {[  8,  2, -2, -8 ]}  =      20    &\quad
  {[  8,  2, -4, -6 ]}  =      12    &\quad
  {[  8,  0, -2, -6 ]}  =     -48    \\
  {[  6,  4, -2, -8 ]}  =      12    &\quad
  {[  6,  4, -4, -6 ]}  =      64    &\quad
  {[  6,  2,  0, -8 ]}  =     -48    &\quad
  {[  6,  2, -2, -6 ]}  =     -64    \\
  {[  6,  0, -2, -4 ]}  =     -84    &\quad
  {[  4,  2,  0, -6 ]}  =     -84    &\quad
  {[  4,  2, -2, -4 ]}  =     560    &\quad
  {[  8,  4, -6, -8 ]}  =     -10    \\
  {[  8,  2, -4, -8 ]}  =       9    &\quad
  {[  8,  0, -2, -8 ]}  =      13    &\quad
  {[  8,  0, -4, -6 ]}  =     -21    &\quad
  {[  6,  4, -4, -8 ]}  =      13    \\
  {[  6,  2, -2, -8 ]}  =      -6    &\quad
  {[  6,  2, -4, -6 ]}  =      72    &\quad
  {[  6,  0, -2, -6 ]}  =    -218    &\quad
  {[  4,  2,  0, -8 ]}  =    -105    \\
  {[  4,  2, -2, -6 ]}  =      84    &\quad
  {[  4,  0, -2, -4 ]}  =     553    &\quad
  {[  8,  2, -6, -8 ]}  =     -14    &\quad
  {[  8,  0, -4, -8 ]}  =      21    \\
  {[  8, -2, -4, -6 ]}  =     -28    &\quad
  {[  6,  4, -6, -8 ]}  =     -18    &\quad
  {[  6,  2, -4, -8 ]}  =      68    &\quad
  {[  6,  0, -2, -8 ]}  =     -48    \\
  {[  6,  0, -4, -6 ]}  =    -154    &\quad
  {[  4,  2, -2, -8 ]}  =    -126    &\quad
  {[  4,  2, -4, -6 ]}  =     252    &\quad
  {[  4,  0, -2, -6 ]}  =       0    \\
  {[  2,  0, -2, -4 ]}  =    1106    &\quad
  {[  8,  0, -6, -8 ]}  =     -14    &\quad
  {[  8, -2, -4, -8 ]}  =      28    &\quad
  {[  6,  2, -6, -8 ]}  =     -42    \\
  {[  6,  0, -4, -8 ]}  =     105    &\quad
  {[  6, -2, -4, -6 ]}  =    -420    &\quad
  {[  4,  2, -4, -8 ]}  =     175    &\quad
  {[  4,  0, -2, -8 ]}  =    -483    \\
  {[  4,  0, -4, -6 ]}  =      91    &\quad
  {[  2,  0, -2, -6 ]}  =    1106    &\quad
  {[  8, -2, -6, -8 ]}  =       0    &\quad
  {[  6,  0, -6, -8 ]}  =    -112    \\
  {[  6, -2, -4, -8 ]}  =     224    &\quad
  {[  4,  2, -6, -8 ]}  =      56    &\quad
  {[  4,  0, -4, -8 ]}  =     252    &\quad
  {[  4, -2, -4, -6 ]}  =   -2576    \\
  {[  2,  0, -2, -8 ]}  =   -1792    &\quad
  {[  2,  0, -4, -6 ]}  =    3864    &\quad
  \end{array}
  \]
  \end{center}
  \caption{Second quaternary algebra structure $g$ on $V(8)$}
  \label{structureconstants8part2}
  \end{table}

\end{document}